\theoremstyle{plain}
\newtheorem{theorem}{Theorem}[section]
\newtheorem{corollary}[theorem]{Corollary}
\newtheorem{lemma}[theorem]{Lemma}
\newtheorem{remark}[theorem]{Remark}
\newtheorem*{theorem*}{Theorem}
\theoremstyle{definition}
\newtheorem{definition}[theorem]{Definition}
\newcommand{\ds}{\oplus}
\newcommand{\tr}{\otimes}
\newcommand{\blf}[2]{\langle #1 \, , #2 \rangle }
\newcommand{\mor}{\longrightarrow}
\newcommand{\fun}{\mapsto}
\renewcommand{\d}[1]{\ensuremath{\mathbb{#1}}}
\renewcommand{\r}[1]{\ensuremath{\mathcal{#1}}}
\newcommand{\f}[1]{\ensuremath{\mathfrak{#1}}}
\newcommand{\frobdef}[2]{#1\stackrel{\operatorname{def}}{\dashrightarrow} #2}
\newcommand{\epi}{\xymatrix{{}\ar@{->>}[r]&{}}}
\def\Spec{\operatorname{Spec}}
\def\ch{\operatorname{char}}
\def\Hom{\operatorname{Hom}}
\def\Tor{\operatorname{Tor}}
\def\mod{\operatorname{mod}}
\def\ker{\operatorname{ker}}
\def\coker{\operatorname{coker}}
\def\im{\operatorname{im}}
\def\Id{\operatorname{Id}}
\def\rk{\operatorname{rk}}
\def\max{\operatorname{max}}
\newcommand{\mono}{\xymatrix{{}\ar@{^{(}->}[r]&{}}}
\def\gldim{gl.\operatorname{dim}}
\def\dim{\operatorname{dim}}
\DeclareMathOperator{\ShHom}{\mathscr{H}\text{\kern -2pt{\calligra\large om}}\,}
\DeclareMathOperator{\ShExt}{\mathscr{E}\text{\kern -2pt {\calligra\large xt}}\,}
\title[Some generalizations of Preprojective Algebras]{Some generalizations of Preprojective Algebras and their properties}
\author{LOUIS DE THANHOFFER DE VOLCSEY}
\thanks{The first author is a Ph.D student at UHasselt, Agoralaan, Hasselt, Belgium, \texttt{ldethanhoffer@me.com}}
\author{DENNIS PRESOTTO} 
\thanks{The second author is a Ph.D fellow with the FWO Flanders at UHasselt, Agoralaan, Hasselt, Belgium, \texttt{dennis.presotto@uhasselt.be}}
\begin{document}

\begin{abstract}
In this note we consider a notion of relative Frobenius pairs of commutative rings $S/R$. To such a pair, we associate an $\mathbb{N}$-graded $R$-algebra $\Pi_R(S)$ which has a simple description and coincides with the preprojective algebra of a quiver with a single central node and several outgoing edges in the split case. If the rank of $S$ over $R$ is 4 and $R$ is noetherian, we prove that $\Pi_R(S)$ is itself noetherian and finite over its center and that each $\Pi_R(S)_d$ is finitely generated projective. We also prove that $\Pi_R(S)$ is of finite global dimension if $R$ and $S$ are regular.
\end{abstract}

\maketitle

\tableofcontents

\section{Introduction}
\subsection{Definitions}
For the purposes of this paper, we consider pairs of commutative rings $R,S$ equipped with a map $R \mor S$. We often write such a pair as $S/R$. We will always assume $R$ is Noetherian, although some of the results also hold in higher generality.
\begin{definition}\label{def:frobenius}
We say that $S/R$ is \emph{relative Frobenius} of rank $n$ if:
\begin{itemize}
\item $S$ is a free $R$-module of rank $n$.
\item $\Hom_R(S,R)$ is isomorphic to $S$ as $S$-module.
\end{itemize}
\end{definition}
\begin{remark}\label{rem:defFrob}
\begin{itemize}
\item It is clear  that if $R$ is a field, a relative Frobenius pair coincides with a finite dimensional Frobenius algebra in the classical sense.
\item Let $e_1, \ldots, e_n$ be any basis for $S$ as an $R$-module. Then the second condition is equivalent to the existence of a $\lambda \in \Hom_R(S,R)$ such that the $R$-matrix $\left( \lambda(e_i e_j) \right)_{i,j}$ is invertible.
\item We may equally well assume that $S/R$ is projective of rank $n$. However all results we prove may be reduced to the free case by suitably localizing $R$.
\end{itemize}
\end{remark}

We shall need the following notation: for a relative Frobenius pair $S/R$, let $M:={}_R S_S$. This $R$-$S$-bimodule can be viewed as an $R\ds S$ bimodule by letting the $R$-component act on the left and the $S$-component on the right, the other actions being trivial. Similarly, we let $N:={}_S S_R$ and view it as an $R\ds S$-bimodule by only letting the $S$-component act on the left and the $R$-component act on the right, the other actions again begin trivial. We now define
\[ T(R,S):=T_{R\ds S}(M\ds N) \]
Note that by construction, we have $M\tr_{R\ds S} M=N \tr_{R\ds S} N=0$, hence
\[T(R,S)_2=\left( M_{R \ds S} N \right) \ds \left( N \tr_{R\ds S} M\right)=\left({}_R S\tr_S  S_R\right) \ds \left({}_S S\tr_R S_R\right) \]
The algebra we are interested in will be a quotient of $T(R,S)$ as follows: let $\lambda$ be a generator
of $\Hom_R(S,R)$ as an  $S$-module (this $\lambda$ exists by the Frobenius condition \ref{def:frobenius} we imposed). The $R$-bilinear form $\blf{a}{b}:=\lambda(ab)$ is clearly nondegenerate and hence we can find dual $R$-bases $(e_i)_i$, $(f_j)_j$  satisfying
\[ \lambda(e_if_j)=\delta_{ij} \]
\begin{definition} \label{def}
For a relative Frobenius pair, the \emph{generalized preprojective algebra} $\Pi_R(S)$  is given by
\[
T(R,S)/I
\]
where the ideal $I$ is generated by relations in degree 2 given by
\[
1\otimes 1\in {}_R S\otimes_S S_R
\]
\[
\sum_i e_i\otimes f_i\in {}_S S\otimes_R S_S
\]
\end{definition}
\begin{remark}
\noindent
Up to isomorphism, the above construction is independent of choice of generator and dual basis.
\end{remark}
\noindent
The name generalized preprojective algebra is motivated by the following:
\begin{lemma} \label{lem:classicpreprojective} Let $S$ be the ring $R^{\ds n}$.\\ 
Then $\Pi_R(S)$ is isomorphic to the preprojective algebra over $R$ associated to the quiver with one central vertex and $n$ outgoing arrows. 
\end{lemma}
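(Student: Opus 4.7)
The plan is to exhibit an explicit dictionary between $\Pi_R(S)$ and the classical preprojective algebra of the star quiver $Q$ with central vertex $0$ and $n$ outer vertices connected by arrows $a_i\colon 0\to i$, and then verify that the relations match under this dictionary.

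First, I fix the standard idempotent basis $e_1,\ldots,e_n$ of $S=R^{\oplus n}$ and note that the ring $R\oplus S$ is canonically isomorphic to $R^{n+1}$, with central idempotents $\varepsilon_0:=(1,0)$ and $\varepsilon_i:=(0,e_i)$ playing the role of the vertex idempotents of the doubled quiver $\bar Q$. A direct inspection of the bimodule structures yields $\varepsilon_0 M \varepsilon_j = R\cdot e_j$ and $\varepsilon_i N \varepsilon_0 = R\cdot e_i$, with all other vertex components vanishing. Thus $M$ is free on generators $a_j:=e_j\in M$, interpreted as arrows $0\to j$, and $N$ is free on reverse arrows $a_j^*:=e_j\in N$. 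Since tensoring over $R\oplus S$ enforces matching vertex idempotents, $T(R,S)=T_{R\oplus S}(M\oplus N)$ is canonically identified with the $R$-path algebra $R\bar Q$.

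Next, I would choose the generator $\lambda\in\Hom_R(S,R)$ to be the sum-of-coordinates map $(s_1,\ldots,s_n)\mapsto\sum_i s_i$, so that the dual basis to $\{e_i\}$ is simply $\{f_i=e_i\}$. The two defining relations of the ideal $I$ then need to be unfolded explicitly. In $M\otimes_{R\oplus S}N$, writing $1_M=\sum_i e_i$ and $1_N=\sum_j e_j$ and using $e_i\cdot\varepsilon_j=\delta_{ij}e_i$ in $M$ together with $\varepsilon_j\cdot e_j=e_j$ in $N$, the element $1\otimes 1$ collapses to $\sum_i e_i\otimes e_i$, which under the dictionary becomes $\sum_i a_i a_i^*$ --- precisely the preprojective relation at the central vertex $0$. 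The second relation $\sum_i e_i\otimes f_i$ lies in $N\otimes_{R\oplus S}M\cong {}_SS\otimes_R S_S$ and corresponds to $\sum_i a_i^*a_i$; multiplying by $\varepsilon_j$ on either side isolates the single summand $a_j^*a_j$, so the two-sided ideal it generates is precisely $\langle a_j^*a_j : j=1,\ldots,n\rangle$, the preprojective relation at each outer vertex.

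The only real work is this bookkeeping step of unfolding the compactly written relations in $I$ into the classical vertex-by-vertex preprojective relations; no deeper obstacle is anticipated. The authors' earlier remark that $\Pi_R(S)$ is independent of the choice of $\lambda$ and dual basis justifies working with this specific convenient choice.
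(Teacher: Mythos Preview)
Your proposal is correct and follows essentially the same approach as the paper: both use the standard idempotent basis of $S=R^{\oplus n}$, choose $\lambda$ to be the sum-of-coordinates map so that the basis is self-dual, identify $T(R,S)$ with the path algebra of $\bar Q$, and then match the two defining relations of $I$ with the preprojective relations $\sum a_i a_i^*=0$ and $\sum a_i^* a_i=0$. The only cosmetic difference is that the paper writes out an explicit free-algebra presentation with six families of relations and then defines a map to $R\bar Q$, whereas you argue more structurally by reading off the Peirce decomposition of $M$ and $N$ under the idempotents $\varepsilon_0,\varepsilon_1,\ldots,\varepsilon_n$; your extra remark that multiplying the second relation by $\varepsilon_j$ isolates $a_j^*a_j$ is a helpful clarification but not a genuinely different idea.
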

\begin{proof}
Let $e_1, \ldots, e_n$ be the set of complete orthogonal idempotents in S and write $x_1, \ldots, x_n$ (respectively $y_1, \ldots, y_n) \in \Pi_R(S)_1$ for the corresponding elements in the bimodules $N$ (respectively $M$). We can describe the tensoralgebra $T(R,S)$ as the free algebra $F:=R \langle e_1, \ldots , e_n, x_1, \ldots , x_n, y_1, \ldots, y_n \rangle$ subject to the relations
\begin{enumerate}
\item $e_ie_j=\delta_{ij}e_i $.
\item  $e_ix_j=\delta_{ij}x_i$ and $y_i e_j=\delta_{ij}y_i$
\item $x_ie_j = e_iy_j=0$
\item $x_ix_j = y_iy_j=0$
\end{enumerate}The first relation defining $\Pi_R(S)$ is given by $1\tr 1 \in M \tr_S N$. The first unit is $1=\sum y_i$  whereas the second  is $1=\sum x_i$, we obtain
\begin{enumerate}
\setcounter{enumi}{4}
\item $y_1 x_1 + \ldots + y_n x_n= 0$
\end{enumerate}
To compute the second relation, we note that
\[ \lambda: S\rightarrow R: \sum_{i=1}^n r_ie_i \mapsto \sum_i r_i\]
is a generator of $\Hom_R(S,R)$ as an $S$-module and hence $(e_i)_i$ is a basis, self-dual for the associated form $\blf{}{}$ introduced in the discussion preceding Definition \ref{def} . The relation on ${}_S S \tr_R S_S$ now becomes
\begin{enumerate}
\setcounter{enumi}{5}
\item $x_1y_1 + \ldots + x_n y_n = 0$
\end{enumerate}
It now remains to show that the algebra $F$ subject to the above $6$ relations is isomorphic to the preprojective algebra of the quiver $Q$:
\begin{center}
\begin{tikzpicture}
\matrix(m)[matrix of math nodes,
row sep=4em, column sep=4em,
text height=1.5ex, text depth=0.25ex]
{\bullet  & \bullet & \bullet \\
\bullet  & \bullet & \bullet \\
\bullet & \ldots & \bullet \\};
\path[->,font=\scriptsize]
(m-2-2) edge node[auto]{$a_n$} (m-1-1)
        edge node[auto]{$a_1$} (m-1-2)
        edge node[below]{$a_2$} (m-1-3)
        edge node[below]{$a_3$}(m-2-3)
        edge (m-3-3)
        edge (m-3-1)
        edge node[auto]{$a_{n-1}$} (m-2-1)
;
\end{tikzpicture}
\end{center}
We let $\overline{Q}$ denote the formally doubled quiver of $Q$ and consider the map $F\mor R\overline{Q}$ defined  by
\begin{itemize}
\item sending $e_i$ to the outer node $n_i$
\item sending $y_i$ to the arrow $a_i$ and $x_i$ to the formal inverse $a_i^*$
\end{itemize}
The first 4 relations now precisely describe the multiplication in the path algebra of $\overline{Q}$ and the relations $(5)$ an $(6)$ precisely map to the two relations defining a preprojective algebra $\sum a_ia^*_i=0=\sum a^*_ia_i$
\end{proof}

\begin{remark}
In \cite{CBH}, Crawley-Boevey and Holland define so-called deformed preprojective algebras $\Pi^\lambda(Q)$ over a field $k$, where $Q$ is a quiver with $n$ vertices and $\lambda \in k^{\oplus n}$ is a parameter. They obtain a $k[\lambda]$-family which at the special point $\lambda = 0$ produces the classical preprojective algebra on $Q$. Our construction on the other hand produces an $R$-family of algebras of the form $\Pi_R(S)$ which in the smooth case produces the classical preprojective algebra at the geometric generic fibre. Hence we consider generalizations of preprojective algebras instead of deformations, this allows for certain special fibers to have infinite global dimension (if $S$ has infinite global dimension) but produces a nice family.
\end{remark}

\subsection{Statement of the Results}
We are particularly interested in the setting where $S/R$ is relative Frobenius of rank 4 (although a number of results are stated in higher generality). Moreover $R$ will always be a noetherian ring. We prove three basic properties of the algebra $\Pi_R(S)$ under these assumptions.\\[\medskipamount] 
\S 3 is dedicated to the following result:
\begin{theorem*}[see \ref{thm:free}]
The $R$-module $\Pi_R(S)_d$ is projective of rank $\left\{ \begin{array}{ll} 5(d+1) & \textrm{ if $d$ is even} \\
4(d+1) & \textrm{ if $d$ is odd} \end{array} \right.$
\end{theorem*}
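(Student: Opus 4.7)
The plan is to compute the $R$-ranks of the graded pieces of $\Pi := \Pi_R(S)$ via a Koszul-type bimodule resolution, and then deduce projectivity from constancy of fibre dimensions. Set $A = R \oplus S$ and $V = M \oplus N$, so that $\Pi$ is the quotient of $T_A(V)$ by the two-sided ideal generated by an $A$-subbimodule $W \subset V \otimes_A V$ placed in degree $2$.

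The first step is to identify $W$ as an $A$-bimodule. The relation $1 \otimes 1 \in M \otimes_S N = {}_R S_R$ generates the rank-one $R$-subbimodule $R \subset S$. The relation $\sum_i e_i \otimes f_i \in N \otimes_R M \cong {}_S(S \otimes_R S)_S$ generates an $S$-subbimodule $W_S$; the key calculation is to verify, using the Frobenius identity $\sum_i e_i f_i = 1$ together with a rigidity argument (settled in the split case of Lemma \ref{lem:classicpreprojective} and extended by faithfully flat base change), that the multiplication map $N \otimes_R M \to S$ restricts to an isomorphism $W_S \cong S$. Hence $W \cong A$ as an $A$-bimodule, and in particular $W$ is a free $R$-module of rank $5$.

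Next, I would construct the complex of graded $\Pi$-bimodules
\[
0 \longrightarrow \Pi \otimes_A \Pi(-2) \longrightarrow \Pi \otimes_A V \otimes_A \Pi(-1) \longrightarrow \Pi \otimes_A \Pi \longrightarrow \Pi \longrightarrow 0,
\]
with the standard differentials (the leftmost using the inclusion $W \hookrightarrow V \otimes_A V$ together with the identification $W \cong A$). Exactness at the two rightmost positions is essentially formal for graded algebras generated in degree $1$. Exactness in the middle is the main technical step, and I would establish it by reducing via faithfully flat base change on $R$ to the split case, where $\Pi$ coincides with the preprojective algebra of the extended Dynkin quiver $\widetilde{D}_4$ and the complex is its classical Koszul resolution.

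Once exactness is known, the resulting matrix equation for the $A$-bimodule-valued Hilbert series of $\Pi$ can be solved, and summing over the components of $A$ produces the total Hilbert series
\[
H(t) = \frac{5 + 8t + 5t^2}{(1-t^2)^2} = \sum_{k \geq 0} 5(2k+1)\, t^{2k} + \sum_{k \geq 0} 8(k+1)\, t^{2k+1},
\]
in agreement with the claimed ranks. Finally, each $\Pi_d$ is a finitely generated $R$-module (being a quotient of a free module of finite rank) whose fibre dimensions on $\Spec R$ are constant by the above computation, hence $\Pi_d$ is projective of the stated rank. The main obstacle throughout is proving exactness of the Koszul-type complex in the middle term, uniformly across all rank-$4$ relative Frobenius pairs; this is essentially the PBW property for $\Pi_R(S)$.
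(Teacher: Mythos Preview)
Your overall strategy---compute the Hilbert series from a Koszul-type resolution and deduce projectivity from constancy of fibre dimensions---is reasonable, but the argument has a genuine gap at its most important step.

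You propose to establish exactness of the Koszul complex by ``reducing via faithfully flat base change on $R$ to the split case.'' This reduction does not exist. A rank-$4$ relative Frobenius pair $S/R$ need not become split after any faithfully flat base change: over an algebraically closed field $k$, the algebras $k[t]/(t^4)$ and $k[s,t]/(s^2,t^2)$ are Frobenius of rank $4$ but are not isomorphic to $k^{\oplus 4}$, and no base change on $k$ can make them so. Thus the preprojective algebra of $\widetilde{D}_4$ covers only one of the six isomorphism classes in Lemma~\ref{lem:classificationfrobenius}, and you have no mechanism to handle the others. The same objection applies to your proposed verification of $W_S \cong S$ ``by faithfully flat base change to the split case.''

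The paper confronts exactly this obstruction and resolves it by a different route: it introduces \emph{Frobenius deformations} (Definition~\ref{def:frobeniusdeformation}) linking all six algebras in a directed diagram (Lemma~\ref{lem:deformations}), proves that $\dim_k \Pi_k(F)_d$ is monotone along deformations (Lemma~\ref{lem:ineqpirs}), and then computes the two extremal cases $k^{\oplus 4}$ and $k[s,t]/(s^2,t^2)$ explicitly to force equality throughout. The passage from fields to general noetherian rings then proceeds through the chain local domain $\to$ domain $\to$ local ring with algebraically closed residue field $\to$ local ring $\to$ general ring, using Lemma~\ref{lem:reducetodomain} to handle non-reduced bases. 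Note in particular that your final step (``constant fibre dimension implies projective'') is not valid over non-reduced rings without further input; the paper circumvents this by lifting to a domain $\tilde{R}$ first.

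It is worth remarking that the paper does use a one-sided version of your Koszul complex in Lemma~\ref{lem:relexactsequence}, but only \emph{after} Theorem~\ref{thm:free} is established: exactness is verified there by comparing Hilbert series, which requires already knowing the ranks. So the logical dependence is the reverse of what you propose.
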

In \S 4 we investigate the center of $\Pi_R(S)$. To this end, we use the short hand notation $Z_d(R,S) := Z(\Pi_R(S))_d$\begin{theorem*}[see \ref{lem:centersplit}, \ref{lem:basechangecenterrank4} and \ref{thm:centerprojectiverank2}]
$Z_d(R,S)$ is a split submodule of ${\Pi_R(S) }_d$ for each $d \in \mathbb{N}$, it is projective of rank:
\[ \rk(Z_d(R,S)) = \begin{cases} \frac{d}{4}+1 & \textrm{if } d \equiv 0 \ \mod \ 4 \\ \frac{d-2}{4} & \textrm{if } d \equiv 2 \ \mod \ 4 \\ 0 & \textrm{else} \end{cases} \]
We deduce from it that $Z(R,S)$ is compatible with base change under any morphism $R\mor R'$.
\end{theorem*}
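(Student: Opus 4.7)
The plan is to establish the three assertions in sequence: splitness first, then the explicit rank formula by reduction to a known case, with base change compatibility emerging as a formal consequence of the previous two.

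For the splitness of $Z_d(R,S) \subseteq \Pi_R(S)_d$, I would construct an explicit $R$-linear retraction using the Frobenius data. The natural candidate exploits the fact that multiplication in $\Pi_R(S)$ combined with the trace $\lambda$ induces a symmetric nondegenerate pairing on each graded piece, with respect to which the center should coincide with the orthogonal to the image of the commutator operators $[a,-]$ for $a$ ranging over generators in $\Pi_R(S)_1$. Since $\Pi_R(S)_d$ is projective by the theorem of \S 3 and the commutator maps are expected to have locally constant rank (checkable pointwise against the rank formula of \S 3), this orthogonal is a projective direct summand.

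For the rank computation, I would reduce by base change to the case where $R$ is an algebraically closed field and $S \cong R^{\ds 4}$ is split. In this setting Lemma \ref{lem:classicpreprojective} identifies $\Pi_R(S)$ with the preprojective algebra of the star-shaped quiver with four outer vertices, i.e.\ the preprojective algebra of affine Dynkin type $\widetilde{D}_4$. The Hilbert series of its center is computable by standard means (for instance via the Morita equivalence with the skew group algebra of the binary dihedral group $Q_8 \subseteq SL_2$ acting on a polynomial ring in two variables), and will yield the asserted piecewise formula.

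Base change compatibility then falls out formally: the isomorphism $\Pi_R(S) \tr_R R' \cong \Pi_{R'}(S \tr_R R')$ is immediate from the tensor-algebra description; an $R$-linear splitting of $Z_d(R,S) \subseteq \Pi_R(S)_d$ descends to an $R'$-linear splitting after tensoring with $R'$; and since the ranks dictated by the formula depend only on $d$, the resulting inclusion $Z_d(R,S) \tr_R R' \subseteq Z_d(R', S \tr_R R')$ is forced to be an equality. The main obstacle will be constructing the splitting retraction in a form that is manifestly compatible with base change; the split-case rank computation is, in essence, a finite verification.
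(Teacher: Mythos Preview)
Your outline has a genuine gap in the rank computation step. You propose to ``reduce by base change to the case where $R$ is an algebraically closed field and $S \cong R^{\ds 4}$ is split,'' but base change alone cannot accomplish this reduction. If $R = k$ is already algebraically closed and $S = k[s,t]/(s^2,t^2)$, no ring map $k \to R'$ will make $S \tr_k R'$ split; the nilpotents persist. So computing the center only for the $\widetilde{D}_4$ preprojective algebra is not enough. The paper closes this gap with the Frobenius deformation machinery of \S 2: it shows every rank-$4$ Frobenius algebra over $\overline{k}$ sits in a directed diagram of deformations between the two extremal cases $k[s,t]/(s^2,t^2)$ and $k^{\ds 4}$, proves an inequality $\dim Z_d(k,F) \ge \dim Z_d(k,G)$ along each deformation, and then computes the center explicitly at \emph{both} endpoints (the $k^{\ds 4}$ case via the Morita equivalence you mention, and the $k[s,t]/(s^2,t^2)$ case by a lengthy hands-on argument in the appendix). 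The matching Hilbert series at the two ends force equality throughout.

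This gap also undermines your splitness argument. You want the commutator maps $[a,-]$ to have locally constant rank, but verifying that pointwise amounts to knowing $\dim_{k(\f p)} Z_d(k(\f p), S \tr k(\f p))$ is independent of $\f p$, which is precisely the field computation you have not secured for non-split fibers. The paper's route to splitness is also quite different from your pairing idea: it writes $Z_d(R,S)$ as the kernel of an explicit map $\phi_{R,S}$ between free modules, and over a local domain compares the matrix rank at the residue field and the fraction field (both known by the field case) to force $\phi_{R,S}$ to decompose as an isomorphism direct-summed with zero. Splitness over general $R$ then follows the reduction chain local domain $\to$ domain $\to$ local ring with $\overline{k}=k$ $\to$ local ring $\to$ general ring, using Lemmas~\ref{lem:localclosure} and~\ref{lem:reducetodomain}. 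No trace pairing is invoked, and none is established in the paper; your proposed retraction would need an independent argument that such a nondegenerate pairing on $\Pi_R(S)_d$ actually exists.
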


\S  5 is dedicated to constructing a map
\[ \sigma_{R,S}: R[Z_4(R,S)]^{\oplus n} \mor \Pi_R(S) \]
and we prove
\begin{theorem*}[see \ref{thm:sigmamain} and \ref{thm:main}]
$\sigma_{R,S}$ is surjective, in particular $\Pi_R(S)$ is Noetherian and finite over its center.
\end{theorem*}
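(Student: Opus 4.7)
The plan is to reduce to the case where $R$ is a field via base change, then prove surjectivity over a field by combining an explicit low-degree check with a stabilization argument using multiplication by degree $4$ central elements.

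By the base-change compatibility of $Z(R,S)$ established in Section 4, together with the fact that $\Pi_R(S)$ commutes with base change on each graded piece (a consequence of the projectivity of each $\Pi_R(S)_d$ proved in Section 3), the map $\sigma_{R,S}$ is functorial under arbitrary ring morphisms $R \mor R'$. Since every graded piece of the target $\Pi_R(S)$ is finitely generated over the Noetherian ring $R$, the graded Nakayama lemma reduces surjectivity of $\sigma_{R,S}$ in each degree to surjectivity of the analogous map $\sigma_{k,S_k}$ at each residue field $k$ of $R$.

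Over a field $k$ the argument would proceed by induction on the graded degree. For degrees up to some explicit bound $d_0$, surjectivity is verified directly from the dimension formulas $\dim_k \Pi_k(S)_d = 5(d+1)$ or $4(d+1)$ of Section 3, combined with the identification $k[Z_4(k,S)] \cong k[x_1,x_2]$ coming from the rank computation in Section 4. For $d > d_0$ the inductive step reduces to showing that the multiplication map
\[ Z_4(k,S) \tr_k \Pi_k(S)_{d-4} \mor \Pi_k(S)_d \]
is surjective, so that every element of $\Pi_k(S)_d$ already lies in the $R[Z_4]$-submodule generated by the piece $\Pi_k(S)_{d-4}$. Numerically the source has dimension $10(d-3)$ or $8(d-3)$ while the target has dimension $5(d+1)$ or $4(d+1)$, and the inequality needed for this is comfortable once $d \geq 7$.

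The main obstacle is precisely to prove that this multiplication map is actually surjective rather than merely having a source of sufficient dimension. This is the heart of the theorem and must exploit both the defining Frobenius relations built into $\Pi_R(S)$ and the central nature of the chosen degree $4$ elements; the most natural route is to exhibit, in each congruence class of degrees modulo $4$, a small set of explicit bimodule generators whose $Z_4$-orbits exhaust the corresponding graded pieces, matching the ranks predicted by Section 3. Once surjectivity of $\sigma_{R,S}$ is established, $\Pi_R(S)$ is finitely generated as a module over the commutative subring $R[Z_4(R,S)]$ of its center. Since this subring is a finitely generated commutative $R$-algebra, it is Noetherian by Hilbert's basis theorem; the finite-over-the-center statement and the Noetherian property of $\Pi_R(S)$ then follow at once.
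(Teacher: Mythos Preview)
Your reduction to the field case via base change and Nakayama is correct and matches the paper's argument. The genuine gap is exactly where you say it is: the surjectivity of the multiplication map $Z_4(k,S)\otimes_k\Pi_k(S)_{d-4}\to\Pi_k(S)_d$ over a field. You acknowledge this as ``the main obstacle'' and propose to exhibit explicit bimodule generators uniformly in $S$, but you do not carry this out, and there is no uniform argument in sight that works for an arbitrary $4$-dimensional Frobenius algebra $S$ over an arbitrary field $k$. The dimension count you give is necessary but far from sufficient, and ``verifying surjectivity directly from the dimension formulas'' in low degrees is not meaningful: knowing $\dim\Pi_k(S)_d$ does not tell you whether a particular map hits everything.

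The paper closes this gap with a device you have not mentioned: the Frobenius deformation argument of \S2. Surjectivity of $\sigma_{k,F}$ is shown to propagate along a Frobenius deformation $\frobdef{F}{G}$ (Lemma~\ref{lem:defosigma}, via Nakayama over $k[[u]]$ and faithful flatness of $k((u))/k$), and the deformation diagram (\ref{eq:deformat}) then reduces the entire field case to the single algebra $F=k[s,t]/(s^2,t^2)$. For this one $F$ the paper does essentially what you sketch, but with a twist: it works with degree-$2$ \emph{normalizing} elements $u,v$ (not central), checks by brute force on an explicit list of generators that $V\otimes\Pi_k(S)_1\to\Pi_k(S)_3$ is onto for $V=\langle u,v\rangle$, bootstraps this to all degrees, and only then passes to $W=\langle u^2,v^2\rangle\subset Z_4$. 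Without the deformation reduction you would have to repeat such an explicit computation for each of the six isomorphism classes in Lemma~\ref{lem:classificationfrobenius} (and still handle non-algebraically-closed fields), which is precisely what the paper's method avoids.
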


The final section covers the global dimension of $\Pi_R(S)$. Our main result there is:
\begin{theorem*}[see \ref{thm:globdim}]
If $R$ and $S$ have finite global dimension, then so does $\Pi_R(S)$. Moreover, we have the following explicit upper bound:
\[ gr.gl.dim(\Pi_R(S)) \leq max( gl.dim(R), gl.dim(S) ) + 2 \]
\end{theorem*}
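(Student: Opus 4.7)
The plan is to bound the graded global dimension of $\Pi := \Pi_R(S)$ by constructing a length-$2$ projective resolution of $A := R \oplus S = \Pi_0$ as a graded left $\Pi$-module, and then applying a standard change-of-rings bound. Since $A$ is a product of two rings, $\operatorname{gl.dim}(A) = \max(\operatorname{gl.dim}(R), \operatorname{gl.dim}(S))$, so the desired inequality follows once we establish $\operatorname{pd}_\Pi(A) \leq 2$ together with the Jategaonkar-style bound $\operatorname{gr.gl.dim}(\Pi) \leq \operatorname{pd}_\Pi(A) + \operatorname{gl.dim}(A)$, which applies when $\Pi$ is flat over $A$.

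For the resolution, write $V := M \oplus N$ for the degree-$1$ generators and $I_2 \subset V \otimes_A V$ for the bimodule of relations. A key identification is $I_2 \cong A$ as an $A$-bimodule: the relation $1 \otimes 1 \in M \otimes_S N \cong S$ generates the sub-$R$-bimodule $R \subset S$, while the Casimir $\Omega := \sum_i e_i \otimes f_i \in N \otimes_R M \cong S \otimes_R S$ is $S$-central (by self-duality of $(e_i), (f_j)$ under the Frobenius form) and hence generates an $S$-bimodule copy of $S$. With this identification, we propose the candidate Koszul-type complex
\[
0 \to \Pi[-2] \xrightarrow{d_2} \Pi \otimes_A V[-1] \xrightarrow{d_1} \Pi \xrightarrow{\epsilon} A \to 0
\]
where $\epsilon$ is projection to degree zero, $d_1$ is multiplication, and $d_2$ is induced by $A \cong I_2 \hookrightarrow V \otimes_A V$. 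Exactness at $A$, at $\Pi$, and in degrees $\leq 2$ is immediate.

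The heart of the argument is exactness at $\Pi \otimes_A V[-1]$ in each degree $n \geq 3$, i.e., that
\[
0 \to \Pi_{n-2} \to \Pi_{n-1} \otimes_A V \to \Pi_n \to 0
\]
is short exact. Right exactness follows since $\Pi$ is generated in degree $1$ over $A$. For the rest, the plan is to match $R$-ranks on both sides using the formula $\operatorname{rk}_R(\Pi_d) = 5(d+1)$ for even $d$ and $4(d+1)$ for odd $d$ from Theorem~\ref{thm:free}, combined with the parity-based decomposition of each $\Pi_d$ into diagonal or off-diagonal $A$-bimodule pieces under the idempotents $e_R, e_S$; injectivity of $d_2$ reduces to the non-degeneracy of the Frobenius form used to define $\Omega$. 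This Euler-characteristic-style verification is the step where the rank-$4$ assumption enters essentially.

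Granted the resolution, each of its terms is of the form $\Pi \otimes_A P$ for a finitely generated projective $A$-module $P$ (namely $A$, $V$, and $A$), and is therefore a projective graded $\Pi$-module, giving $\operatorname{pd}_\Pi(A) \leq 2$. The change-of-rings bound requires $\Pi$ to be flat as a right $A$-module, verified degree-wise by decomposing $\Pi_d$ along $e_R, e_S$ and observing that each summand is projective over the appropriate base ring using Theorem~\ref{thm:free}. The main obstacle is the middle exactness verification above; the rest is either definitional or a standard application of the change-of-rings inequality.
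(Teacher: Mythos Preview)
Your resolution of $A = R \oplus S$ is exactly the paper's Lemma~\ref{lem:relexactsequence}, and your exactness argument via rank-counting matches the paper's --- though note that the total rank formula from Theorem~\ref{thm:free} is not enough by itself: to compute $\operatorname{rk}_R(\Pi_{n-1}\otimes_A V)$ you must know the ranks of $\Pi_{n-1}\cdot 1_R$ and $\Pi_{n-1}\cdot 1_S$ separately, which is precisely the content of Lemma~\ref{lem:splithilbertseries} that the paper invokes.

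The substantive difference is in the final step. The paper does \emph{not} use a change-of-rings inequality. Instead it reduces to simple $\Pi$-modules via Bass's theorem, shows (Lemma~\ref{lem:simplemodules2}) that every simple $\Pi$-module is annihilated by $\Pi_{\geq 1}$ and is therefore already a simple $R$- or $S$-module, resolves it over $R$ (resp.\ $S$) by projectives $P_i$, and then observes that each $P_i$ is a summand of a free $R$- (resp.\ $S$-)module, so that $\operatorname{pd}_\Pi(P_i)\le\operatorname{pd}_\Pi(R\oplus S)\le 2$ directly from the resolution. This route never needs $\Pi$ to be flat over $A$. Your route, by contrast, requires $\Pi\cdot 1_S$ to be projective as an \emph{$S$-module}, which is not what Theorem~\ref{thm:free} gives (it only establishes $R$-projectivity), and you do not supply an independent argument for it; moreover the ``Jategaonkar-style'' inequality $\operatorname{gr.gl.dim}(\Pi)\le\operatorname{pd}_\Pi(A)+\operatorname{gl.dim}(A)$ in the generality you need (arbitrary $A$, arbitrary graded $\Pi$-modules rather than $A$-modules inflated along $\Pi\to A$) is not an off-the-shelf citation. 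So while the two approaches share the same Koszul resolution as their core, the paper's endgame via simple modules is both more elementary and closes a gap that your flatness-based approach leaves open.
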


Before proving the main theorems of this paper we explicitly describe the Frobenius algebras of rank 4 over an algebraically closed field and show that they are related by so-called Frobenius deformations (see Lemmas \ref{lem:classificationfrobenius} and \ref{lem:deformations}). The above theorems are then all proven using a common technique, namely we first prove them assuming $R$ is an algebraically closed field and $S$ is extremal in the deformation graph (\ref{eq:deformat}), which yields two specific cases. Then we extend the results step by step, increasing the generality of $R$ as follows (with references to the applied lemmas): 
\begin{equation}
\begin{tikzpicture} \node at (0,1.5) {2 specific cases}; \node at (3.7,1.5) {alg. closed field}; \node at (6.2,1.5) {field}; \node at (10,1.5) {local domain}; \node at (10,0) {domain}; \node at (10,-2){local ring with $\overline{k}=k$}; \node at (5,-2) {local ring}; \node at (1,-2) {general ring}; \draw[->] (1.3,1.5) to  node[above]{\ref{lem:deformations}} (2.4,1.5); \draw[->] (5,1.5)--(5.7,1.5); \draw[->] (6.7,1.5) to node[above]{\cite[1.4.4]{GrothendieckSGA1}}(8.9,1.5); \draw[->] (10,1.2)--(10,0.3); \draw[->] (10,-0.3) to node[above,sloped]{\ref{lem:reducetodomain}} (10,-1.8); \draw[->] (8.3,-2) to node[below]{\ref{lem:localclosure}}(5.8,-2); \draw[->] (4.1,-2)--(2,-2); \end{tikzpicture} \label{diag:reduction}
\end{equation}

\section*{Acknowledgements}
The authors thank Michel Van den Bergh for providing many interesting ideas and for pointing out the Morita equivalence as in Lemma \ref{lem:dihedralskew}. The authors thank Johan de Jong for suggesting the use of universal families of Frobenius extensions (see e.g. Lemma \ref{lem:reducetodomain}). The authors also thank Kevin De Laet for helpful discussions.

\section{Preliminaries}
Throughout the paper we concern ourselves mostly with Frobenius pairs $S/R$ of rank 4. If $R$ is a field, this implies that $S$ is a Frobenius algebra of dimension 4. It is an easy exercise to describe all such algebras in the algebraically closed case:
\begin{lemma}\label{lem:classificationfrobenius}
Let $k$ be an algebraically closed field and $F$ a commutative Frobenius algebra of dimension 4 over $k$. Then $F$ is isomorphic to one of the following algebras:
$$\left\{
\begin{array}{cc}
&k\ds k\ds k\ds k\\ 
&k[t]/(t^2)\ds k\ds k\\
&k[s]/(s^2)\ds k[t]/(t^2)\\
&k[t]/(t^3)\ds k\\
&k[t]/(t^4)\\
&k[s,t]/(s^2,t^2)
\end{array}
\right.
$$

\end{lemma}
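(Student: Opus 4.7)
The plan is to reduce the classification to the case where $F$ is local, then enumerate local Frobenius $k$-algebras of dimension $\leq 4$ and combine them via products to give total dimension $4$.

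First, since $F$ is a commutative Artinian $k$-algebra, by the structure theory of such rings we can write
\[ F \cong F_1 \times \cdots \times F_r \]
as a product of local commutative $k$-algebras. Moreover, since $k$ is algebraically closed and each $F_i$ is local Artinian, each $F_i$ has residue field $k$. The Frobenius condition $\Hom_k(F,F) \cong F$ as $F$-modules decomposes along this product: each factor $F_i$ is itself Frobenius, and conversely a product of Frobenius algebras is Frobenius. So the task reduces to classifying local commutative Frobenius $k$-algebras of dimension at most $4$ (with residue field $k$), and then listing the compositions of $4$.

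Next, I would invoke the standard characterization: a local commutative Artinian $k$-algebra $(F, \mathfrak{m})$ with residue field $k$ is Frobenius if and only if its socle $\mathrm{soc}(F) = \mathrm{ann}_F(\mathfrak{m})$ is one-dimensional over $k$. With this in hand, I enumerate by dimension: in dimension $1$ only $k$; in dimension $2$ only $k[t]/(t^2)$; in dimension $3$ only $k[t]/(t^3)$ (any other local 3-dim algebra would have $\mathfrak{m}^2=0$ and socle of dimension $2$). In dimension $4$, I split into cases according to the nilpotency index of $\mathfrak{m}$. If $\mathfrak{m}^3 \neq 0$, then the chain $F \supsetneq \mathfrak{m} \supsetneq \mathfrak{m}^2 \supsetneq \mathfrak{m}^3 \supsetneq 0$ has all quotients of dimension $1$, forcing $\mathfrak{m}$ to be principal and hence $F \cong k[t]/(t^4)$. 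If $\mathfrak{m}^3 = 0$ and $\mathfrak{m}^2 \neq 0$, then $\dim \mathfrak{m}^2 = 1$ (the simple socle) and $\dim \mathfrak{m}/\mathfrak{m}^2 = 2$. Choosing lifts $s,t$ of a basis of $\mathfrak{m}/\mathfrak{m}^2$, the elements $s^2, st, t^2$ all lie in the one-dimensional space $\mathfrak{m}^2$; a careful change of basis (using that $k$ is algebraically closed to solve a conic, plus rescaling) then normalizes the multiplication so that $s^2 = t^2 = 0$ and $st \neq 0$, yielding $F \cong k[s,t]/(s^2,t^2)$. The case $\mathfrak{m}^2 = 0$ is excluded since then $\mathfrak{m} = \mathrm{soc}(F)$ would have dimension $3$.

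Finally, I assemble the global list by running over all partitions of $4$: $4 = 1{+}1{+}1{+}1,\ 2{+}1{+}1,\ 2{+}2,\ 3{+}1,\ 4$. Inserting the possible local factors from each dimension produces exactly the six algebras listed in the statement, with the only multiplicity appearing at total dimension $4$ where the two local options $k[t]/(t^4)$ and $k[s,t]/(s^2,t^2)$ both occur. The main obstacle is the last sub-case of the dimension $4$ local analysis, namely showing that once $\dim\mathfrak{m}^2=1$ one can always normalize to $k[s,t]/(s^2,t^2)$; this is where algebraic closedness of $k$ is essential (to kill the quadratic form $\alpha s + \beta t \mapsto (\alpha s + \beta t)^2$ valued in $\mathfrak{m}^2\cong k$), and some care with characteristic may be needed if one wants to handle $\mathrm{char}\,k = 2$ uniformly.
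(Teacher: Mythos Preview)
Your overall strategy matches the paper's: decompose $F$ as a product of local Artinian $k$-algebras via the structure theorem, use that a commutative local Artinian $k$-algebra is Frobenius if and only if its socle (equivalently, its unique minimal ideal) is one-dimensional, and then classify the possible local factors. The difference lies in how that local classification is carried out. The paper outsources the enumeration of local $k$-algebras of dimension $\le 4$ to Poonen's tables and afterwards discards those that fail to be self-injective, whereas you classify the local \emph{Frobenius} algebras directly from the socle criterion together with the filtration by powers of $\mathfrak m$. Your route is more self-contained and avoids the external reference, at the price of doing the dimension-$4$ local case by hand.

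The one genuine gap is exactly where you flag it. In the subcase $\dim_k F=4$, $\mathfrak m^3=0$, $\dim_k\mathfrak m^2=1$, multiplication gives a nondegenerate symmetric bilinear form on the $2$-dimensional space $\mathfrak m/\mathfrak m^2$ with values in $\mathfrak m^2\cong k$. In characteristic $\ne 2$ such a form over an algebraically closed field admits a hyperbolic basis, so one can arrange $s^2=t^2=0$, $st\ne 0$, and conclude $F\cong k[s,t]/(s^2,t^2)$ as you say. In characteristic $2$, however, the squaring map $\mathfrak m/\mathfrak m^2\to\mathfrak m^2$ is Frobenius-semilinear with a priori only a one-dimensional kernel, and replacing $t$ by $t+\lambda s$ leaves $t^2$ unchanged; so your normalization does not go through. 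Concretely, the algebra $k[s,t]/(s^2+t^2,st)$ from Poonen's list (only in characteristic $2$) has one-dimensional socle $ks^2$, yet in $k[s,t]/(s^2,t^2)$ every element of $\mathfrak m$ squares to zero, so the two are not isomorphic. The paper deals with this case by citation rather than by your direct method; either way, your argument is complete for $\mathrm{char}\,k\ne 2$, and the characteristic-$2$ situation requires a separate treatment.
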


\begin{proof}
First recall that 
\begin{enumerate}
\item a direct sum of Frobenius algebras is itself Frobenius.
\item a finite dimensional commutative local $k$-algebra is Frobenius if and only if it has a unique minimal ideal.
\end{enumerate}
It follows immediately that $k[t]/(t^n)$ is Frobenius (of dimension $n$) over $k$ as it has a unique minimal ideal $(t^{n-1})$ and $k[x_1, \ldots , x_n]/(x_1^2, \ldots, x_n^2)$ is also Frobenius (of dimension $2^n$) with unique minimal ideal $(x_1 \cdot \ldots \cdot x_n)$. Thus the algebras in the above list are certainly Frobenius.\\
Now let $F$ be Frobenius of dimension $4$. Since $F$ is Artinian, the structure theorem for Artinian rings \cite[Theorem 8.7]{atiyahmacdonald} states that $F$ must (uniquely) decompose as a direct sum of local, Artinian $k$-algebras:
\[ F\cong F_1\ds \ldots \ds F_n \]
We can now use the classification of local $k$-algebras of small rank in \cite[Table 1]{poonen}.\\
If $n=4$, then clearly $F=k\ds k\ds k\ds k$.\\
If $n=3$, then $F\cong A_1\ds k \ds k$ where $\dim_k(A_1)=2$, hence $A_1\cong k[t]/(t^2)$ which is Frobenius.\\
If $n=2$, then either $F$ splits as a sum of $2$-dimensional local $k$-algebras, in which case we obtain $F\cong k[s]/(s^2)\ds k[t]/(t^2)$ or $F=A_1\ds k$ where $\dim_k(A_1)=3$. This again yields 2 possibilities: either $A_1 \cong k[t]/(t^3)$, which is Frobenius, or $A_1 \cong k[s,t]/(s,t)^2$. The latter however cannot be Frobenius as it is not self-injective (the morphism $A_1t \mor A_1: t \mapsto s$ cannot be lifted to $A_1 \mor A_1$).
\\
Finally, assume $n=1$. In this case $F$ is a local $k$-algebra of dimension $4$ and by \cite{poonen} takes one of the five following forms:
\[ \begin{cases} k[t]/(t^4) \\ k[s,t]/(s^2,t^2) \\ k[s,t]/(s^2,st,t^3) \\ k[s,t,u]/(s,t,u)^2 \\ k[s,t]/(s^2+t^2,st) & \textrm{(if char($k$)=2)}\end{cases} \]
The first two algebras are Frobenius whereas the other three are not as they are not self-injective by a similar argument as above.\\
\end{proof}
The 6 Frobenius algebras listed in the above lemma are related to each other by a notion closely related to deformations. For this purpose, we introduce the following ad hoc notion:
\begin{definition}\label{def:frobeniusdeformation}
Let $F$ and $G$ be Frobenius algebras over a field $k$.\\
A \emph{Frobenius deformation} of $F$ to $G$  is a $k[[u]]$-algebra $D$ such that $D$ is relatively Frobenius over $k[[u]]$  and
\begin{enumerate}
\item $D/uD \cong F$ as  a $k$-algebra
\item $D_{(u)}  \cong G\tr_k k((u))$ as a $k((u))$-algebra
\end{enumerate}
we write $\frobdef{F}{G}$
\end{definition}
\begin{remark}
Instead of requiring that $D/k[[u]]$ is relative Frobenius we may equivalently require that $D$ is free over $k[[u]]$ with rank equal to the dimension of $F$. The condition that $\Hom_{k[[u]]}(D,k[[u]])$ should be isomorphic to $D$ as $D$ modules is immediate by the corresponding condition on $F/k$.
\end{remark}
\begin{lemma}\label{lem:deformations}
There is a diagram of Frobenius deformations

\begin{eqnarray}
\begin{tikzpicture}
\label{eq:deformat}
\matrix(m)[matrix of math nodes,
row sep=3em, column sep=2em,
text height=1.5ex, text depth=0.25ex]
{ & & k[t]/(t^2) \oplus k[s]/(s^2) & & \\
k[s,t]/(s^2,t^2) & k[t]/(t^4) & & k[t]/(t^2) \oplus k\oplus k & k^{\oplus 4}\\
 & & k[t]/(t^3) \oplus k & & \\};
\path[->,dashed,font=\scriptsize]
(m-2-2) edge node[above,sloped]{$\operatorname{def}$} node[below]{2} (m-1-3)
        edge node[above,sloped]{$\operatorname{def}$} node[below]{3} (m-3-3)
(m-1-3) edge node[above,sloped]{$\operatorname{def}$} node[below]{4} (m-2-4)
(m-3-3) edge node[above,sloped]{$\operatorname{def}$} node[below]{5} (m-2-4)
(m-2-4) edge node[above]{$\operatorname{def}$} node[below]{6} (m-2-5)
(m-2-1) edge node[above]{$\operatorname{def}$} node[below]{1} (m-2-2);
\end{tikzpicture}
\end{eqnarray}

\end{lemma}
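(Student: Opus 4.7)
The plan is to exhibit, for each of the six arrows of diagram (\ref{eq:deformat}), an explicit $k[[u]]$-algebra $D_i$ realizing the claimed Frobenius deformation. By the remark immediately preceding the lemma, it suffices to check for each $D_i$ that it is free of rank $4$ over $k[[u]]$, that $D_i/uD_i$ is isomorphic to the source algebra, and that the generic fiber $D_i \otimes_{k[[u]]} k((u))$ is isomorphic to the target tensored to $k((u))$; the self-duality $\Hom_{k[[u]]}(D_i, k[[u]]) \cong D_i$ then follows automatically from the corresponding property of the source.

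I propose the following families:
\[ D_1 = k[[u]][s,t]/(s^2 - ut,\, t^2), \qquad D_2 = k[[u]][x]/\bigl(x^2(x-u)^2\bigr), \qquad D_3 = k[[u]][x]/\bigl(x^3(x-u)\bigr), \]
\[ D_4 = k[[u]][s]/(s^2) \oplus k[[u]][t]/(t^2 - ut), \qquad D_5 = k[[u]][t]/\bigl(t^2(t-u)\bigr) \oplus k[[u]], \]
\[ D_6 = k[[u]][t]/(t^2 - ut) \oplus k[[u]]^{\oplus 2}. \]
For $D_2, \ldots, D_6$, freeness of rank $4$ over $k[[u]]$ is immediate because each summand is a quotient of a polynomial ring by a monic polynomial whose degree matches the rank of the summand. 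Reduction modulo $u$ recovers the source algebra by direct inspection in each case, and the generic fiber decomposition follows from the Chinese Remainder Theorem applied over $k((u))$, where the factors of every defining polynomial become pairwise coprime (e.g.\ $x^2$ and $(x-u)^2$ for $D_2$, or $t$ and $t-u$ for $D_6$). For $D_1$ a short Gröbner basis argument with leading monomials $s^2$ and $t^2$ shows that $\{1, s, t, st\}$ is a $k[[u]]$-basis; setting $u = 0$ recovers $k[s,t]/(s^2, t^2)$, while inverting $u$ one may eliminate $t = u^{-1} s^2$, turning $t^2 = 0$ into $s^4 = 0$ and identifying the generic fiber with $k((u))[s]/(s^4)$.

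The only real obstacle is guessing the correct families; once the $D_i$ are written down, every verification reduces to a routine polynomial manipulation or an application of the Chinese Remainder Theorem, and the Frobenius condition itself is handled for free by the cited remark.
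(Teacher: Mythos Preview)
Your proof is correct. For deformations 1, 2, and 3 your families coincide (up to relabeling variables) with those in the paper. For deformations 4, 5, and 6 you take a slightly different route: you keep the direct-sum decomposition of the source algebra visible and deform only one summand, whereas the paper uses the Chinese Remainder Theorem to rewrite each source and target uniformly as $k[t]/(g(t))$ for a single monic degree-$4$ polynomial $g$, so that all of cases 2--6 share the common basis $(1,t,t^2,t^3)$. Your approach has the pleasant side effect that your $D_6$ works in every characteristic, whereas the paper's choice $g(t)=(t^2-u^2)(t^2-1)$ fails when $\ch(k)=2$ (the linear factors $t-u$ and $t+u$ collapse), forcing a separate family there --- which is in fact exactly your $D_6$. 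The paper's single-variable packaging, on the other hand, buys a uniform presentation and a uniform freeness argument across the five cases.
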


\begin{proof}
We first describe $\frobdef{F := k[s,t]/(s^2,t^2)}{G := k[t]/(t^4)}$. Let $R:=k[[u]]$, $K:=k((u))$ and define
\[ D:=R[s,t]/(us-t^2,s^2,t^4)\] We claim that $D$ defines a deformation from $F$ to $G$. It is clear that $D/uD \cong F$ as a $k$-algebra and the map 
\[ D \mor K[t]/(t^4): u \mapsto u, s \mapsto t^2/u, t \mapsto t\] factors through an isomorphism
\[  D_{(u)} \mor K[t]/(t^4) = G \otimes_k K \]
Hence by the above remark it suffices to check that $D$ is a free $R$-module of rank 4. This is obviously the case with $e_1=1, e_2=s, e_3=t, e_4=st$ providing an $R$-basis for $D$.

The other cases are similar. We first use the Chinese Remainder Theorem to find an alternate presentation for $F$ of the forms $k[t]/(f(t))$. Then for each deformation $\frobdef{F}{G}$, we try to find an alternate presentation for $G \otimes_k K$ (again using the Chinese Remainder Theorem) of the form $K[t]/(g(t))$ in such a way that $g(t)\arrowvert_{u=0}=f(t)$. We then exhibit an $R$-algebra $D:=R[t]/(g(t))$. We leave the reader to check that in each of our choices, $(1,t,t^2,t^3)$ defines an $R$-basis
.

\begin{center}
\begin{tabular}{|c|c|c|c|c|c|}
\hline
 & & & & & \\
number & 2 & 3 & 4 & 5 & 6* ($char(k)\neq 2$) \\
 & & & & & \\
\hline
 & & & & & \\
$g(t)$ & $t^2(t-u)^2$ & $t^3(t-u)$ & $(t-1)^2t(t-u)$ & $t^2(t-1)(t-u)$ & $(t^2-u^2)(t^2-1)$ \\
 & & & & & \\
\hline
\end{tabular}
\end{center}
* If $\ch(k)=2$, one chooses $D=R[t]/(t(t-u)) \oplus R^{\oplus 2}$ for deformation nr. 6 In this case $(1,0,0), (t,0,0), (0,1,0), (0,0,1)$ provides the required $R$-basis for $D$.
\end{proof}

\section{Computing $\rk(\Pi_R(S)_d)$}
The construction of the algebra $\Pi_R(S)$ (recall the definition from \ref{def}) is compatible with base change in the following way:

\begin{lemma}[Base Change for $\Pi_R(S)$] \label{lem:basechange}
Let $S/R$ be relative Frobenius of rank $n$ and $R\mor R'$ a morphism of rings. Then
\begin{enumerate}
\item
$(R'\tr_R S)/R'$ is relative Frobenius of rank $n$
\item there is a canonical isomorphism $$R'\tr_R \Pi_R(S) \cong \Pi_{R'}(R'\tr_R S)$$ 
\end{enumerate}
\end{lemma}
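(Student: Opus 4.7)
My plan is to handle parts (1) and (2) in succession, with (1) providing the input for (2).

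For part (1), the freeness of $R' \tr_R S$ over $R'$ of rank $n$ is immediate from the freeness of $S$ over $R$. For the Frobenius condition, I would invoke the natural isomorphisms
\[ \Hom_{R'}(R' \tr_R S, R') \cong \Hom_R(S, R') \cong R' \tr_R \Hom_R(S, R) \cong R' \tr_R S, \]
where the middle isomorphism uses that $S$ is finitely generated projective over $R$ and the last uses the Frobenius condition on $S/R$. Each of these is compatible with the $(R' \tr_R S)$-module structures. Explicitly, if $\lambda \in \Hom_R(S,R)$ is a generator, then $\lambda' := 1 \tr \lambda \colon R' \tr_R S \mor R'$ is a generator of $\Hom_{R'}(R' \tr_R S, R')$, and dual bases $(e_i)_i, (f_j)_j$ for $S/R$ base change to dual bases $(1 \tr e_i)_i, (1 \tr f_j)_j$ for $(R' \tr_R S)/R'$.

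For part (2), I would argue that each ingredient in the construction of $\Pi_R(S)$ commutes with base change. The ring $R \ds S$ base changes to $R' \ds (R' \tr_R S)$, and the bimodules $M = {}_R S_S$ and $N = {}_S S_R$ base change to the analogous bimodules over $(R', R' \tr_R S)$ in the obvious way. Since the tensor algebra functor commutes with base change (immediate from its universal property together with commutativity of tensor products), we obtain a canonical graded isomorphism
\[ R' \tr_R T(R, S) \cong T(R', R' \tr_R S). \]
Under this identification, the two explicit degree-$2$ relations generating the ideal $I$ from Definition~\ref{def} map to the corresponding relations defining the ideal $I'$ for $\Pi_{R'}(R' \tr_R S)$, using the base-changed generator $\lambda'$ and dual basis from part (1). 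Since $T(R,S)$ is flat over $R$ (being built from tensor products over $R \ds S$ of the free $R$-modules $M$ and $N$), base change preserves the exact sequence $0 \to I \to T(R,S) \to \Pi_R(S) \to 0$, yielding the desired isomorphism $R' \tr_R \Pi_R(S) \cong \Pi_{R'}(R' \tr_R S)$.

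The main technical point I foresee is bookkeeping rather than genuine difficulty: I must ensure that the isomorphisms from part (1), which are a priori $R'$-module maps, are in fact compatible with the full $R' \ds (R' \tr_R S)$-bimodule structure needed for the tensor algebra construction, and that the canonical element $1 \tr 1 \in {}_R S \tr_S S_R$ base changes to its evident analogue. Once this compatibility is verified, the ideals $R' \tr_R I$ and $I'$ coincide since they have the same generators, and the isomorphism in (2) follows by passing to quotients.
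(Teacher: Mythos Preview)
Your approach is essentially the same as the paper's, and the overall structure is correct. One point needs correction, however: flatness of $T(R,S)$ over $R$ does \emph{not} imply that $R'\otimes_R -$ preserves the short exact sequence $0 \to I \to T(R,S) \to \Pi_R(S) \to 0$. For that you would need either $R'$ flat over $R$, or $\Pi_R(S)$ (the cokernel, not the middle term) flat over $R$; the latter is eventually true by Theorem~\ref{thm:free}, but that theorem relies on the present lemma, so invoking it here would be circular. Fortunately the flatness step is unnecessary: right exactness of $R'\otimes_R -$ already yields $R'\otimes_R (T(R,S)/I) \cong (R'\otimes_R T(R,S))/\overline{I}$, where $\overline{I}$ is the image of $R'\otimes_R I$, and you have correctly observed that under $R'\otimes_R T(R,S)\cong T(R',R'\otimes_R S)$ the generators of $I$ map to the generators of $I'$, so $\overline{I}=I'$. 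With this adjustment your argument is complete and agrees with the paper's.
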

\begin{proof}
Assume that $S/R$ is relative Frobenius. Then we can pick an $R$-basis $\{ e_1, \ldots , e_n\}$ for $S$ and a generator $\lambda$ for the $S$-module $\Hom_R(S,R)$. It is then easy to see that $\{ 1\otimes e_1, \ \ldots , \ 1 \otimes e_n \}$ is an $R'$-basis for $R'\tr_R S$ and that $1 \otimes \lambda$ is a generator for the $S'\tr_R S$-module $\Hom_{R'}(R'\tr_R S,R'$), proving the first point. With this data we can thus construct $\Pi_{R'}(R' \otimes_R S)$. Moreover,
\[ R'\tr_R\left( {}_R S_S\ds {}_S S_R\right) \cong {}_{R'} (R' \tr_R S)_{R'\tr_R S}\ds {}_{R'\tr_R S} (R'\tr_R S)_{R'} \] as an $(R',R'\tr_R S)$-bimodule, and we obtain a canonical isomorphism
\[ R'\tr_R T(R,S) \cong T(R',R'\tr_R S) \]
which by our choice of basis preserves the relations, inducing an isomorphism
\[ R'\tr_R \Pi_R(S) \cong \Pi_{R'}(R'\tr_R S) \qedhere \]
\end{proof}

To prove that the $R$-modules $\Pi_R(S)_d$ are projective and to compute their ranks, following the method of proof described by diagram (\ref{diag:reduction}) in the introduction, we first treat the case where $R$ is an algebraically closed field. We have the following lemma relating these vector spaces under deformation:
\begin{lemma}\label{lem:ineqpirs}
Let $F$ and $G$ be Frobenius algebras over $k$ and let $\frobdef{F}{G}$ be a Frobenius deformation. Then for all $d$, we have
\[ \dim_k(\Pi_k(F)_d) \geq \dim_k(\Pi_k(G)_d) \]
\end{lemma}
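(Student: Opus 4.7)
The plan is to lift both $\Pi_k(F)$ and $\Pi_k(G)$ to a single $R$-algebra over $R = k[[u]]$, and then exploit the structure theorem for finitely generated modules over a DVR to compare dimensions of the fibres.

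Let $D$ be the deforming $R$-algebra from Definition \ref{def:frobeniusdeformation}. Since $D/R$ is relative Frobenius, Lemma \ref{lem:basechange} produces canonical isomorphisms
\[ k \otimes_R \Pi_R(D) \;\cong\; \Pi_k(D/uD) \;\cong\; \Pi_k(F), \]
\[ k((u)) \otimes_R \Pi_R(D) \;\cong\; \Pi_{k((u))}(D_{(u)}) \;\cong\; \Pi_{k((u))}(G \otimes_k k((u))). \]
Applying Lemma \ref{lem:basechange} once more, this time over the base change $k \mor k((u))$, identifies the right hand side of the second isomorphism with $k((u)) \otimes_k \Pi_k(G)$. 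Taking the degree $d$ part of each expression, we obtain
\[ \Pi_k(F)_d \;\cong\; \Pi_R(D)_d / u\, \Pi_R(D)_d, \qquad \Pi_R(D)_d \otimes_R k((u)) \;\cong\; k((u)) \otimes_k \Pi_k(G)_d. \]

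The point is now that $\Pi_R(D)_d$ is a finitely generated $R$-module. Indeed, $D$ is free of finite rank over $R$, hence so is each tensor power $(M \oplus N)^{\otimes d}_{R \oplus D}$, and $\Pi_R(D)_d$ is a quotient of $T(R,D)_d$. Since $R = k[[u]]$ is a discrete valuation ring, the structure theorem gives a decomposition
\[ \Pi_R(D)_d \;\cong\; R^{r} \oplus \bigoplus_{i=1}^{s} R/u^{a_i}R \]
for some $r,s \geq 0$ and $a_i \geq 1$.

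Reducing modulo $u$ yields a $k$-vector space of dimension $r+s$, whereas localizing at $(u)$ kills all torsion and gives a $k((u))$-vector space of dimension $r$. Combining this with the two isomorphisms above, we conclude
\[ \dim_k \Pi_k(F)_d = r + s \;\geq\; r = \dim_{k((u))}\bigl(k((u)) \otimes_k \Pi_k(G)_d\bigr) = \dim_k \Pi_k(G)_d, \]
which is the desired inequality. The essential ingredient is simply that specialization of a finitely generated DVR module can only increase dimension relative to its generic fibre; no obstacle is expected beyond carefully invoking the base change lemma on both sides.
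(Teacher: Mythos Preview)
Your proof is correct and follows essentially the same approach as the paper: both use Lemma~\ref{lem:basechange} to identify $\Pi_k(F)_d$ and $\Pi_k(G)_d$ with the special and generic fibres of the finitely generated $k[[u]]$-module $\Pi_R(D)_d$, and then exploit upper semicontinuity of fibre dimension. The only cosmetic difference is that the paper lifts a $k$-basis of $\Pi_k(F)_d$ to generators of $\Pi_R(D)_d$ via Nakayama's lemma, whereas you invoke the structure theorem for modules over a DVR to read off the fibre dimensions directly; these are equivalent formulations of the same fact.
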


\begin{proof}
Let $R=k[[u]]$ and $K=k((u))$.\\
Let $m=\dim_k (\Pi_k(F)_d)$.
Assume that $D$ is the $R$-algebra deforming $F$ to $G$. Then since 
$$\Pi_k(F)=\Pi_k(k\tr_R D)=k\tr_R \Pi_R(D)$$ 
by Lemma \ref{lem:basechange}, Nakayama's lemma implies that a $k$-basis of length $m$ for $\Pi_k(F)_d$ lifts to a set of generators for $\Pi_R(D)_d$. Moreover, as 
$$K\tr_k \Pi_k(G)=\Pi_K(K\tr_k G)=\Pi_K(K\tr_R D)=K\tr_R(\Pi_R(D))$$
this set of generators contains a $K$-basis for  $K\tr \Pi_k(G)$. It follows that 
\[ \dim_K(K\tr_k(\Pi_k(G)_d)=\dim_k(\Pi_k(G)_d) \le m \qedhere \]
\end{proof}
We will now prove that in the case of Frobenius algebras of rank 4 the above inequality is actually an equality. We first compute the ranks in two explicit cases:
\begin{lemma}\label{lem:bikwaddegree}
We have
\[ \dim_k \left( \Pi_k \left(\frac{k[s,t]}{(s^2,t^2)} \right)_d \right) \leq \left\{ \begin{array}{cl} 5(d+1) & \textrm{if $d$ is even}\\
4(d+1) & \textrm{if $d$ is odd} \end{array} \right. \]
\end{lemma}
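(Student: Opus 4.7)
The plan is to produce an explicit spanning set of $\Pi_k(S)_d$ of the prescribed cardinality. We decompose $\Pi_k(S)_d$ using the orthogonal central idempotents $e_k, e_S \in R \oplus S$ into the four pieces $e_\varepsilon \Pi_d e_{\varepsilon'}$ with $\varepsilon, \varepsilon' \in \{e_k, e_S\}$. By the alternating structure of the tensor algebra $T(k, S)$ (arising from $M \otimes_{R \oplus S} M = N \otimes_{R \oplus S} N = 0$), only the two ``diagonal'' pieces are nonzero when $d$ is even, and only the two ``off-diagonal'' pieces are nonzero when $d$ is odd.

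Fix generators $\mu$ of $M$ as a right $S$-module and $\nu$ of $N$ as a left $S$-module, both corresponding to $1 \in S$, and use the standard basis $\{1, s, t, st\}$ of $S$. A $k$-basis of $e_k T(k, S)_{2m} e_k$ is given by the monomials $\mu s_1 \nu \mu s_2 \nu \cdots \mu s_m \nu$ with each $s_i$ in this basis (so dimension $4^m$), and analogous monomial descriptions hold for the other three pieces, with additional leftmost and/or rightmost boundary labels.

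The first defining relation, $\mu \otimes 1 \otimes \nu = 0$, eliminates any monomial containing a ``middle'' label (one flanked by a $\mu$ on the left and a $\nu$ on the right) equal to $1$. The second, the Casimir relation
\[
\nu \mu \cdot st + s \cdot \nu \mu \cdot t + t \cdot \nu \mu \cdot s + st \cdot \nu \mu = 0
\]
(coming from the self-dual basis pair $(\{1, s, t, st\}, \{st, t, s, 1\})$), when sandwiched between elements of the tensor algebra, yields further reductions. Sandwiching with $\mu$ on the left and $\nu$ on the right, for example, gives after applying the first relation the identity $\mu s \nu \mu t \nu + \mu t \nu \mu s \nu = 0$; this and similar sandwiches allow adjacent labels in a monomial to be transposed at the cost of a sign, and force any monomial containing two consecutive $st$'s in reducible positions to vanish. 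One thereby arrives at the following normal form for $e_k \Pi_{2m} e_k$: monomials $\mu s_1 \nu \cdots \mu s_m \nu$ with $(s_1, \ldots, s_m) \in \{s, t, st\}^m$ weakly increasing in the ordering $s < t < st$ and containing no two consecutive $st$'s. A direct combinatorial count yields exactly $2m + 1$ such sequences; analogous analyses of the other three pieces give bounds of $8m + 4$, $4m + 4$, and $4m + 4$, which sum to $5(d+1)$ for $d = 2m$ and $4(d+1)$ for $d = 2m+1$.

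The main obstacle is a rigorous verification that the Casimir reductions, combined with the first relation and applied in all positions, suffice to rewrite every monomial as a combination of normal-form ones. This amounts to a Gröbner-basis-style argument: fix a monomial ordering (for instance lexicographic on label sequences) and show by induction on this ordering that every non-normal-form monomial can be rewritten via the relations as a linear combination of strictly smaller monomials.
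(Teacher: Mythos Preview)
Your approach is essentially the one the paper takes: decompose $\Pi_k(S)_d$ via the idempotents $1_k,1_S$ (what the paper calls ``Type~I'' and ``Type~II''), derive from the two defining relations the anticommutation rules among the blocks $fse$, $fte$, $fste$ and the vanishing of words with two occurrences of $st$, and then count the surviving normal forms. Your treatment of $e_k\Pi_{2m}e_k$ reproduces the paper's Case~1 (your normal form places the single $st$ at the end rather than in the middle, but the anticommutation relations make this immaterial), and your piece-by-piece totals agree with the paper's.

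The one place where ``analogous analyses'' undersells the work is $e_S\Pi_{2m}e_S$. Here there are two boundary labels in $\{1,s,t,st\}$ in addition to the interior labels in $\{s,t,st\}$, and the Casimir relation together with its left and right $S$-multiples produces a larger family of relations than in the closed case; a uniform ``weakly increasing'' normal form no longer applies cleanly. In the paper this is Case~2, which occupies most of Appendix~A.1: the argument is organised by bidegree $(m,n)$ into four sub-ranges $m+n\in\{\tfrac d2-1,\tfrac d2,\tfrac d2+1,\tfrac d2+2\}$, each with its own explicit list of generators, and the counts $\tfrac d2,\ \tfrac{3d}2+1,\ \tfrac{3d}2+2,\ \tfrac d2+1$ are summed to reach $4(d+1)$. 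Your claimed bound $8m+4$ is correct, but reaching it needs this finer bookkeeping rather than a direct analogue of the $e_k$-case.
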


\begin{proof}
This is proven in appendix $A.1$.
\end{proof}
\begin{lemma} \label{lem:etingoff}
Let $k$ be an algebraically closed field, then
\[ \dim_k \Big( \Pi_k(k^{\oplus 4})_d \Big) = \left\{ \begin{array}{cl} 5(d+1) \textrm{ if $d$ is even} \\
4(d+1) \textrm{ if $d$ is odd} \end{array} \right. \]
\end{lemma}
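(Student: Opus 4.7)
By Lemma \ref{lem:classicpreprojective}, $\Pi_k(k^{\oplus 4})$ is the preprojective algebra of the extended Dynkin quiver $\tilde{D}_4$ (the star with one central vertex and four outer vertices), so the claim amounts to computing the Hilbert series of $\Pi(\tilde{D}_4)$. I will establish the formula by matching upper and lower bounds.

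For the upper bound, the plan is to concatenate arrows from the deformation diagram (\ref{eq:deformat}) into a chain
\[ k[s,t]/(s^2,t^2) \;\dashrightarrow\; k[t]/(t^4) \;\dashrightarrow\; k[t]/(t^2)\oplus k[s]/(s^2) \;\dashrightarrow\; k[t]/(t^2)\oplus k \oplus k \;\dashrightarrow\; k^{\oplus 4}, \]
and iterate Lemma \ref{lem:ineqpirs} to obtain $\dim_k \Pi_k(k^{\oplus 4})_d \le \dim_k \Pi_k(k[s,t]/(s^2,t^2))_d$. Combined with Lemma \ref{lem:bikwaddegree}, this yields $\le 5(d+1)$ for $d$ even and $\le 4(d+1)$ for $d$ odd.

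For the matching lower bound, the cleanest route is via the McKay correspondence: $\tilde{D}_4$ is the McKay quiver of the binary dihedral (quaternion) group $Q_8\subset\mathrm{SL}_2(k)$ acting naturally on $V = k^2$. When $\operatorname{char}(k)\nmid |Q_8| = 8$, $\Pi(\tilde{D}_4)$ is Morita equivalent to a corner $eAe$ of the skew group algebra $A:=k[V]\#Q_8$, where $e=\sum_{i=0}^{4}e_i$ is a sum of primitive idempotents of $kQ_8$ picking out the five irreducible representations $W_0,\ldots,W_4$ of $Q_8$. A Molien-style computation with the character table of $Q_8$ then evaluates
\[ \dim_k(eAe)_d \;=\; \sum_{i,j}\dim_k \Hom_{Q_8}(W_i,\,W_j\otimes S^d V) \]
to exactly $5(d+1)$ (respectively $4(d+1)$) in even (respectively odd) degrees, closing the gap.

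The main obstacle is characteristic $2$, where $|Q_8|$ is not invertible and the Morita route above fails. The plan to get around this is to exploit that $\Pi_R(R^{\oplus 4})$ behaves well under base change (Lemma \ref{lem:basechange}): starting from the $\mathbb{Z}$-form $\Pi_{\mathbb{Z}}(\mathbb{Z}^{\oplus 4})$, the free rank of each graded piece is determined by the characteristic-zero Hilbert series already computed, while the uniform upper bound from the first step forces the absence of torsion at every prime (any $p$-torsion would make $\dim_{\mathbb{F}_p}$ exceed the upper bound). Hence $\dim_k \Pi_k(k^{\oplus 4})_d$ equals the predicted value uniformly in $k$, giving the formula in every characteristic. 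Alternatively one may quote Etingof's characteristic-independent Hilbert-series formula for preprojective algebras of affine Dynkin type.
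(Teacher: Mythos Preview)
Your proposal is correct but takes a genuinely different route from the paper.

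The paper's proof is a single direct computation: by Lemma~\ref{lem:classicpreprojective}, $\Pi_k(k^{\oplus 4})$ is the preprojective algebra of $\tilde D_4$, and the matrix Hilbert series formula $W(t)=(1-tC+t^2)^{-1}$ from \cite[Prop.~3.2.1]{Etingof} (valid over any field) is inverted explicitly for the adjacency matrix $C$ of the doubled quiver. Summing all entries gives the closed form $h_{\Pi_k(S)}(t)=(5+8t+5t^2)/(1-t^2)^2$, which unpacks to the stated values. No upper/lower bound matching, no McKay correspondence, and no characteristic splitting is used.

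Your argument instead sandwiches the dimension. The upper bound comes from chaining the deformations in (\ref{eq:deformat}) through Lemma~\ref{lem:ineqpirs} and then invoking Lemma~\ref{lem:bikwaddegree}; this is legitimate since those lemmas precede Lemma~\ref{lem:etingoff} and do not depend on it. For the lower bound in characteristic $\neq 2$ you use the graded \emph{isomorphism} $\Pi(\tilde D_4)\cong e\,(k[x,y]\#Q_8)\,e$ (your phrase ``Morita equivalent to a corner $eAe$'' undersells what you need: Morita equivalence alone does not control Hilbert series, but the standard statement is indeed a graded isomorphism). The resulting count $\sum_{i,j}\dim\Hom_{Q_8}(W_i,W_j\otimes S^dV)=\tfrac{1}{|Q_8|}\sum_g |\psi(g)|^2\chi_{S^dV}(g)$ with $\psi=\sum_i\chi_i$ evaluates to $\tfrac{d+1}{8}\bigl(36+4(-1)^d\bigr)$, matching the claim. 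Your characteristic-$2$ workaround via the integral form $\Pi_{\mathbb Z}(\mathbb Z^{\oplus 4})$ is also sound: Lemma~\ref{lem:basechange} gives $\Pi_{\mathbb F_p}(\mathbb F_p^{\oplus 4})\cong\Pi_{\mathbb Z}(\mathbb Z^{\oplus 4})\otimes\mathbb F_p$, the free rank is pinned down by the characteristic-zero value, and any $p$-torsion would push $\dim_{\mathbb F_p}$ strictly above the deformation upper bound.

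What each approach buys: the paper's is shorter, uniform in characteristic, and self-contained modulo one citation. Yours is more structural and, interestingly, front-loads machinery (the McKay/Morita picture and integral base change) that the paper itself only introduces later, in \S4, to compute the center. Your closing sentence --- quoting Etingof's characteristic-independent formula --- \emph{is} the paper's proof.
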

\begin{proof}
By Lemma \ref{lem:classicpreprojective}, $\Pi_k(S)$ is the preprojective algebra over $k$ associated to the extended Dynkin quiver of $Q = \widetilde{D_4}$.

Let $\overline{Q}$ denote the formally doubled quiver, let 0 denote the central vertex and  1, 2, 3, 4 the outer vertices. Then for each $d \in \mathbb{N}$ we consider the matrix $W_d \in \mathbb{N}^{5 \times 5}$ where $(W_d)_{ij}$ is the number of paths  of length $d$ in $\overline{Q}$ starting at vertex $i$ and ending at vertex $j$, modulo relations. Finally write $W(t) = \sum_{d=0}^\infty W_d t^d \in \mathbb{N}^{5 \times 5}[[t]]$. Then by \cite[Proposition 3.2.1]{Etingof} we have
\[ W(t) = \frac{1}{1-t \cdot C + t^2} \]
Where $C$ is the adjacency matrix of $\overline{Q}$, i.e.
\begin{eqnarray*}
W(t) & = & \left ( 1 - t \cdot \left( \begin{array}{ccccc} 0 & 1 & 1 & 1 & 1 \\ 1 & 0 & 0 & 0 & 0 \\ 1 & 0 & 0 & 0 & 0 \\ 1 & 0 & 0 & 0 & 0 \\ 1 & 0 & 0 & 0 & 0 \end{array} \right) +t^2 \right)^{-1} \\
 & = & \frac{1}{(1-t^2)^2(1+t^2)} \cdot \left( \begin{array}{ccccc} (1+t^2)^2 & t(1+t^2) & t(1+t^2) & t(1+t^2) & t(1+t^2) \\
 t(1+t^2) & 1-t^2+t^4 & t^2 & t^2 & t^2 \\ t(1+t^2) & t^2 & 1-t^2+t^4 & t^2 & t^2 \\ t(1+t^2)& t^2 & t^2 & 1-t^2+t^4 & t^2 \\ t(1+t^2) & t^2 & t^2 & t^2 & 1-t^2+t^4 \end{array} \right)
\end{eqnarray*}
This gives the desired result as the Hilbert series of $\Pi_k(S)$ now becomes
\begin{eqnarray*}
h_{\Pi_k(S)}(t) & = & \sum_{d=0}^{\infty} \left( \sum_{i,j = 0}^4 (W_d)_{i,j} \right) t^d \\
 & = & \sum_{i,j = 0}^4 \sum_{d=0}^{\infty} (W_d)_{i,j} t^d \\
 & = & \frac{(1+t^2)^2 + 8t(1+t^2) + 4 (1-t^2+t^4) + 12 t^2}{(1-t^2)^2(1+t^2)} \\
 & = & \frac{5+8t+5t^2}{(1-t^2)^2} \\
 & = & (5+8t+5t^2) \sum_{l=0}^\infty (l+1)t^{2l} \\
 & = & \sum_{l=0}^\infty (5l+5(l+1))t^{2l} + 8(l+1)t^{2l+1} \\ 
 & = & \sum_{l=0}^\infty (5(2l+1))t^{2l} + 4((2l+1)+1)t^{2l+1}
\end{eqnarray*}
\end{proof}

\begin{corollary}\label{cor:freefield}
Let $k$ be a field and $F$ a Frobenius algebra (of rank 4) over $k$.\\ Then the dimension of $\Pi_k(F)_d$ is given by
\[ \dim_k \left( \Pi_k(F)_d \right) = \left\{ \begin{array}{cl} 5(d+1) \textrm{ if $d$ is even} \\
4(d+1) \textrm{ if $d$ is odd} \end{array} \right. \]
\end{corollary}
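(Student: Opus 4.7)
The plan is to combine the two explicit computations (Lemmas \ref{lem:bikwaddegree} and \ref{lem:etingoff}) with the deformation inequality (Lemma \ref{lem:ineqpirs}) to handle the algebraically closed case, then descend to arbitrary fields via the base change result (Lemma \ref{lem:basechange}).

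First I would assume $k = \bar{k}$. By Lemma \ref{lem:classificationfrobenius}, $F$ is one of the six algebras listed there, and by Lemma \ref{lem:deformations} each of them appears on a chain of Frobenius deformations stretching from $k[s,t]/(s^2,t^2)$ to $k^{\oplus 4}$ in diagram \ref{eq:deformat}. Applying Lemma \ref{lem:ineqpirs} to every arrow along such a chain (the inequality on dimensions is clearly transitive under composition of deformations) yields
\[ \dim_k \Pi_k(k[s,t]/(s^2,t^2))_d \;\geq\; \dim_k \Pi_k(F)_d \;\geq\; \dim_k \Pi_k(k^{\oplus 4})_d. \]
Lemma \ref{lem:bikwaddegree} bounds the left-hand side above by $5(d+1)$ or $4(d+1)$ according to parity, while Lemma \ref{lem:etingoff} shows that the right-hand side equals exactly this value. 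The sandwich then forces equality throughout, in particular for $F$.

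For a general field $k$, let $\bar{k}$ be an algebraic closure and set $\bar{F} := \bar{k} \otimes_k F$. By Lemma \ref{lem:basechange}(1), $\bar{F}/\bar{k}$ is relative Frobenius of rank $4$, and by Lemma \ref{lem:basechange}(2) there is a canonical isomorphism $\bar{k} \otimes_k \Pi_k(F) \cong \Pi_{\bar{k}}(\bar{F})$, which preserves degree. Since dimension is invariant under field extension,
\[ \dim_k \Pi_k(F)_d \;=\; \dim_{\bar{k}} \Pi_{\bar{k}}(\bar{F})_d, \]
which by the algebraically closed case above is the claimed value.

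The only non-routine ingredient is the sandwich itself, but both endpoints are already in hand: the lower one is Etingof's exact Hilbert series computation, and the upper one is the combinatorial estimate of Lemma \ref{lem:bikwaddegree}. No further obstacle arises here; the step requires only that the deformation graph in diagram \ref{eq:deformat} be connected with $k[s,t]/(s^2,t^2)$ as source and $k^{\oplus 4}$ as sink, which is evident by inspection.
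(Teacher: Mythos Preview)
Your proof is correct and follows essentially the same route as the paper's: reduce to the algebraically closed case via Lemma \ref{lem:basechange}, then sandwich $\dim_k \Pi_k(F)_d$ between the explicit upper bound from Lemma \ref{lem:bikwaddegree} and the exact value from Lemma \ref{lem:etingoff} using the deformation inequality of Lemma \ref{lem:ineqpirs} along the chain in diagram \ref{eq:deformat}. The paper's proof simply cites Lemmas \ref{lem:classificationfrobenius}, \ref{lem:deformations}, \ref{lem:ineqpirs}, \ref{lem:bikwaddegree}, and \ref{lem:etingoff} without spelling out the sandwich, so your version is in fact a more explicit rendering of the same argument.
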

\begin{proof}
By Lemma \ref{lem:basechange} we can reduce to the case where $k$ is algebraically closed. The statement then follows as a combination of Lemmas \ref{lem:classificationfrobenius}, \ref{lem:deformations}, \ref{lem:ineqpirs}, \ref{lem:bikwaddegree} and \ref{lem:etingoff}
\end{proof}
To extend the result from fields to general rings we will need the following two lemmas. They essentially show that locally every relative Frobenius pair can be obtained through base change  (following Lemma \ref{lem:basechange}) from a relative Frobenius pair where the ground ring is a polynomial ring over the integers.
\begin{lemma}\label{lem:localclosure}
Let $R$ be a local ring with residue field $k$ with algebraic closure $\overline{k}$. Then there is a faithfully flat morphism $R \mor \overline{R}$ where $\overline{R}$ is a local ring with residue field $\overline{k}$.
\end{lemma}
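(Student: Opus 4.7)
My plan is to construct $\overline{R}$ directly as a suitable localization of a single large quotient of a polynomial ring over $R$, bypassing any transfinite iteration and any appeal to Hensel's lemma.

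First I would fix, for each element $\alpha \in \overline{k}$, a monic lift $f_\alpha \in R[t]$ of the minimal polynomial $\bar{f}_\alpha \in k[t]$ of $\alpha$. I would then form the $R$-algebra
\[ \tilde{R} \;:=\; R\bigl[\{x_\alpha\}_{\alpha \in \overline{k}}\bigr] \, \big/ \, \bigl( f_\alpha(x_\alpha) \,:\, \alpha \in \overline{k} \bigr). \]
For any finite subset $A = \{\alpha_1,\ldots,\alpha_n\} \subseteq \overline{k}$, the sub-$R$-algebra $\tilde{R}_A$ generated by the $x_{\alpha_i}$ is iteratively obtained from $R$ by adjoining roots of monic polynomials, and is therefore free over $R$ on the monomials $x_{\alpha_1}^{e_1}\cdots x_{\alpha_n}^{e_n}$ with $0 \le e_i < \deg f_{\alpha_i}$. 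Passing to the filtered colimit in $A$ shows that $\tilde{R}$ is free, and in particular faithfully flat, over $R$.

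Next, I would observe that the $R$-algebra map $\tilde{R} \mor \overline{k}$ sending $x_\alpha \fun \alpha$ is well defined, since $f_\alpha(\alpha)$ reduces to $\bar{f}_\alpha(\alpha) = 0$ in $\overline{k}$; moreover it is surjective (since every element of $\overline{k}$ lies in its own image). Its kernel $\mathfrak{M}$ is therefore a maximal ideal, and I define
\[ \overline{R} \;:=\; \tilde{R}_{\mathfrak{M}}. \]
By construction $\overline{R}$ is local with residue field $\overline{k}$, and it is flat over $R$ as a localization of a free $R$-module. Faithful flatness follows at once because the map $R \mor \overline{k}$ kills $\mathfrak{m}_R$, so $\mathfrak{m}_R \subseteq \mathfrak{M}$, whence $\mathfrak{m}_R \overline{R} \subseteq \mathfrak{M}\overline{R}$ is contained in the maximal ideal of $\overline{R}$ and is in particular not the unit ideal.

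The only non-formal point in the argument is the freeness claim for the $\tilde{R}_A$, which is where the choice of \emph{monic} lifts $f_\alpha$ is used; everything else is routine. I do not expect any serious obstacle, since one never needs compatibility between the various $f_\alpha$ — the $x_\alpha$ are genuinely independent generators, and all the interaction between different $\alpha$'s takes place in the residue field after passing to the quotient by $\mathfrak{M}$.
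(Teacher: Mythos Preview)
Your argument is correct. The construction of $\tilde{R}$ as a free $R$-module is clean (the monic lifts guarantee that each finite stage is a tensor product of free $R$-algebras of the form $R[x]/(f)$, and the filtered colimit preserves the explicit monomial basis), the surjection onto $\overline{k}$ is well defined because the composite $R \to \overline{k}$ is the canonical map through $k$, and the faithful flatness check via $\mathfrak{m}_R \subset \mathfrak{M}$ is the standard criterion for a flat map out of a local ring.

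The paper does not argue at all: it simply invokes \cite[10.3.1]{GrothendieckEGA3}, which supplies exactly this kind of faithfully flat local extension with prescribed residue field extension. Your route is a genuinely different, self-contained construction that avoids the EGA reference entirely; the trade-off is that the paper's one-line citation is shorter and also covers more general residue field extensions, whereas your argument is tailored to the case $k \hookrightarrow \overline{k}$ (which is all that is needed here) and has the virtue of being completely explicit. Both approaches yield the same $\overline{R}$ up to the usual non-uniqueness of such extensions.
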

\begin{proof} This is an immediate application of \cite[10.3.1]{GrothendieckEGA3}
\end{proof}

\begin{lemma}\label{lem:reducetodomain}
Let $R$ be a local ring with an algebraically closed residue field $k$. Let $S/R$ be relative Frobenius of rank 4. Then there exists a domain $\tilde{R}$, together with a morphism $\tilde{R} \mor R$ and a ring $\tilde{S}$ with $\tilde{S}/\tilde{R}$ relative Frobenius of rank 4 such that $\tilde{S} \otimes_{\tilde{R}} R \cong S$.\\
Moreover $\tilde{R}$ can be chosen to be chosen of the form $\d{Z}[x_1, \ldots, x_m]_f$, the localization of a polynomial ring over $\d{Z}$ at some non-zero element $f$.
\end{lemma}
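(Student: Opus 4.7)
The plan is to explicitly construct the pair $(\tilde R, \tilde S)$ by case-splitting on the residue algebra $S_k := S \otimes_R k$, which by Lemma~\ref{lem:classificationfrobenius} is one of six specific commutative Frobenius $k$-algebras. Let $\f m$ denote the maximal ideal of $R$. These six cases split naturally into the five \emph{monogenic} ones (generated by a single element over $k$) and the exceptional case $S_k \cong k[s,t]/(s^2, t^2)$.

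For the monogenic cases, $S_k \cong k[x]/(g_0(x))$ for some monic polynomial $g_0$ of degree $4$ (for the split types one uses the Chinese Remainder Theorem, relying on algebraic closedness of $k$ to separate roots). Lifting a primitive element to $x \in S$, Nakayama's lemma combined with the fact that $S$ is $R$-free of rank $4$ shows that $\{1, x, x^2, x^3\}$ is an $R$-basis. Hence $S \cong R[t]/(t^4 + a_3 t^3 + \cdots + a_0)$ for some $a_i \in R$, and I propose
\[
\tilde R := \d Z[T_0, T_1, T_2, T_3], \qquad \tilde S := \tilde R[t]/(t^4 + T_3 t^3 + T_2 t^2 + T_1 t + T_0).
\]
Then $\tilde S$ is free of rank $4$ over $\tilde R$ with basis $\{1, t, t^2, t^3\}$, and the form $\lambda(t^i) := \delta_{i, 3}$ has Gram matrix antitriangular with $1$'s on the antidiagonal, hence of unit determinant; so $\tilde S / \tilde R$ is relative Frobenius. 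The map $\tilde R \to R$ sending $T_i \mapsto a_i$ then gives $\tilde S \otimes_{\tilde R} R \cong S$.

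For $S_k \cong k[s,t]/(s^2, t^2)$, $S_k$ has no primitive element so a different presentation is required. I lift the generators $\bar s, \bar t$ to $s, t \in S$; by Nakayama $\{1, s, t, st\}$ is an $R$-basis for $S$, and
\[
s^2 = a_0 + a_1 s + a_2 t + a_3 st, \qquad t^2 = b_0 + b_1 s + b_2 t + b_3 st
\]
with $a_i, b_i \in \f m$ (since $\bar s^2 = \bar t^2 = 0$ in $S_k$). Taking
\[
\tilde R_0 := \d Z[A_0, \ldots, A_3, B_0, \ldots, B_3], \qquad \tilde S_0 := \tilde R_0[s, t]/(s^2 - P, t^2 - Q),
\]
with $P := A_0 + A_1 s + A_2 t + A_3 st$ and $Q := B_0 + B_1 s + B_2 t + B_3 st$, direct computation reducing $s^2 t$ to an $\tilde R_0$-combination of $\{1, s, t, st\}$ yields the factor $1 - A_3 B_3$ which must be inverted. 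The candidate Frobenius form $\lambda$ sending $st \mapsto 1$ and the rest of the basis to $0$ has Gram matrix of determinant $\Delta \in \tilde R_0$, and $\Delta|_{A = B = 0} = \pm 1 \neq 0$. Setting $f := (1 - A_3 B_3) \cdot \Delta$ and $\tilde R := \tilde R_0[f^{-1}]$, one verifies that $\tilde S := \tilde S_0 \otimes_{\tilde R_0} \tilde R$ is $\tilde R$-free of rank $4$ and Frobenius. Since $a_i, b_i \in \f m$, the image of $f$ in $R$ reduces to $\pm 1$ modulo $\f m$ and hence is a unit, so the map $\tilde R \to R$ sending $A_i \mapsto a_i, B_i \mapsto b_i$ is well-defined and yields $\tilde S \otimes_{\tilde R} R \cong S$.

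The principal difficulty is the exceptional case: while the polynomial ring $\d Z[T_0, \ldots, T_3]$ already has the required form for the monogenic cases, identifying the correct localizing element $f$ in the exceptional case and verifying that $\tilde S$ is free of rank $4$ and Frobenius over the resulting $\tilde R$ requires explicit computation with the multiplication table in the basis $\{1, s, t, st\}$.
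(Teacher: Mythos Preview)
Your proposal is correct and follows essentially the same route as the paper: case-split on the residue algebra via Lemma~\ref{lem:classificationfrobenius}, handle the five monogenic cases with the universal monic quartic over $\mathbb{Z}[T_0,\ldots,T_3]$ (no localization needed, since the Gram matrix for $\lambda(t^i)=\delta_{i,3}$ has unit determinant), and treat $k[s,t]/(s^2,t^2)$ separately by lifting $s,t$, using Nakayama, and passing to a universal presentation over a localized polynomial ring. One small redundancy: the paper computes the Gram determinant $\Delta$ in the exceptional case explicitly and finds $\Delta = 1 - d_1 d_2$ (i.e.\ $1 - A_3 B_3$ in your notation), so the two factors in your $f = (1 - A_3 B_3)\cdot\Delta$ actually coincide up to sign and you need only localize at $1 - A_3 B_3$.
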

\begin{proof}
We prove the theorem in a specific case and quickly sketch the other cases, leaving some details to the reader. By Lemmas \ref{lem:basechange} and \ref{lem:deformations}, $S \otimes_R k$ is one of $6$ Frobenius algebras. Assume $S\otimes k =k[s,t]/(s^2,t^2)$ and let $\tilde{s},\tilde{t} \in S$ be lifts of $s$ and $t$. Since $(1,s,t,st)$ is a $k$-basis for $S\tr_R k$ by  the proof of \ref{lem:deformations}, Nakayama's lemma implies that $(1,\tilde{s},\tilde{t},\tilde{s}\tilde{t})$ forms a set of $R$-generators for $S$. 
In particular we can write:
\begin{eqnarray*}
\tilde{s}^2 & = & a_1 + b_1 \tilde{s} + c_1 \tilde{t} + d_1 \tilde{s} \tilde{t} \\
\tilde{t}^2 & = & a_2 + b_2 \tilde{s} + c_2 \tilde{t} + d_2 \tilde{s} \tilde{t}
\end{eqnarray*}
where $a_1, \ldots , d_2$ all lie in the maximal ideal of $R$ (because $s^2=t^2=0$ in $S \otimes k$). We thus have a canonical morphism 
\[ \pi: R[\tilde{s},\tilde{t}]/(a_1 + b_1 \tilde{s} + c_1 \tilde{t} + d_1 \tilde{s} \tilde{t} - \tilde{s}^2, a_2 + b_2 \tilde{s} + c_2 \tilde{t} + d_2 \tilde{s} \tilde{t} - \tilde{t}^2)\mor S \] such that $\pi\tr_R k$ is the identity morphism. It follows that $\pi$ is surjective, moreover since $S$ is free over $R$, we have $0= \ker(\pi\tr_R k)=\ker(\pi)\tr_R k$ and $\ker(\pi)=0$ using Nakayama's lemma once more. $\pi$ is thus an isomorphism.\\
There is a canonical morphism
\[ A:=\d{Z}[a_1,b_1,c_1,d_1,a_2,b_2,c_2,d_2] \mor R \]
Let $f = 1 - d_1 d_2$ and denote $\tilde{R}=A_f$. Then as the image of $f$ in $R$ is invertible (because $d_1,d_2$ lie in the maximal ideal of $R$), the above morphism factors through a morphism $\tilde{R} \mor R$. Finally set 
\[\tilde{S} = \tilde{R}[\tilde{s},\tilde{t}]/(a_1 + b_1 \tilde{s} + c_1 \tilde{t} + d_1 \tilde{s} \tilde{t} - \tilde{s}^2, a_2 + b_2 \tilde{s} + c_2 \tilde{t} + d_2 \tilde{s} \tilde{t} - \tilde{t}^2).\] By construction we have 
\[ \tilde{S} \otimes_{\tilde{R}} R \cong R[\tilde{s},\tilde{t}]/(a_1 + b_1 \tilde{s} + c_1 \tilde{t} + d_1 \tilde{s} \tilde{t} - \tilde{s}^2, a_2 + b_2 \tilde{s} + c_2 \tilde{t} + d_2 \tilde{s} \tilde{t} - \tilde{t}^2) \stackrel{\pi}{\cong}S \] 
It hence suffice to prove $\tilde{S}/\tilde{R}$ is relative Frobenius of rank 4. For this note that $(e_i)_{1,\ldots, 4}:=(1,\tilde{s},\tilde{t},\tilde{s}\tilde{t})$ is an $\tilde{R}$-basis for $\tilde{S}$. Moreover, if we let $\lambda \in \Hom_{\tilde{R}}(\tilde{S},\tilde{R})$ denote the projection onto the component $\tilde{R}\tilde{s}\tilde{t}$, the matrix of $\lambda(e_i.e_j)$ is of the form
\[\Theta= \begin{bmatrix}  0&0&0&1\\
0&d_1&1&*\\
0&1&d_2&*\\
1&*&*&* \end{bmatrix}\] 
Hence $\Theta$ has determinant $1-d_1 d_2$, which by construction is invertible in $\tilde{R}$, proving that $\tilde{S}$ is indeed Frobenius of rank $4$ over $\tilde{R}$ by Remark \ref{rem:defFrob}.\\ \\ 
In the 5 other cases from Lemma \ref{lem:classificationfrobenius} we have $S \otimes k = k[t]/(t^4 + a t^3 + bt^2 +ct +d)$ for some $a,b,c,d \in k$ and we can choose $\tilde{R}, \tilde{S}$ of the form $\tilde{R}:= \d{Z}[\alpha,\beta,\gamma,\delta]$ and $\tilde{S} := \tilde{R}[t]/(t^4 + \alpha t^3 + \beta t^2 + \gamma t + \delta)$. For each choice of $\alpha, \beta, \gamma, \delta$ we have that $\tilde{S}/\tilde{R}$ is relative Frobenius of rank 4, because the corresponding matrix $\Theta$ will have determinant exactly 1. We leave the details to the reader.
\end{proof}

We can now prove the main theorem of this section:

\begin{theorem}\label{thm:free}
$\Pi_R(S)_d$ is projective of rank $\begin{cases} 5(d+1) & \textrm{ if $d$ is even} \\ 4(d+1) & \textrm{if $d$ is odd} \end{cases}$. 
\end{theorem}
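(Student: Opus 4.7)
The plan is to follow the reduction chain encoded in diagram (\ref{diag:reduction}). Corollary \ref{cor:freefield} already establishes the correct dimension $N := 5(d+1)$ (resp.\ $4(d+1)$) for $\Pi_k(F)_d$ over any field $k$ and any rank-$4$ Frobenius algebra $F/k$, so the task is to promote this equality of fibrewise dimensions to projectivity of rank $N$ over an arbitrary noetherian base ring.

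I would first handle a local domain $R$ with residue field $k$ and fraction field $K$. The module $\Pi_R(S)_d$ is finitely generated over $R$ (being a quotient of the visibly finitely generated $T(R,S)_d$), and by Lemma \ref{lem:basechange} both its closed and generic fibre have $k$- (resp.\ $K$-) dimension $N$ via Corollary \ref{cor:freefield} applied to $S \otimes_R k$ and $S \otimes_R K$. Lifting a $k$-basis of $\Pi_R(S)_d \otimes_R k$ via Nakayama produces a surjection $R^N \twoheadrightarrow \Pi_R(S)_d$ whose kernel vanishes after inverting the generic point; torsion-freeness of the kernel (as a submodule of $R^N$, with $R$ a domain) then forces it to be zero, so $\Pi_R(S)_d$ is free of rank $N$.

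From this point the remaining steps propagate projectivity in a fairly mechanical way. For a general domain $R$, localizing at each prime reduces to the preceding case, and since $R$ is noetherian $\Pi_R(S)_d$ is finitely presented, so locally-free-of-constant-rank implies projective of rank $N$. For a local ring $R$ with algebraically closed residue field, Lemma \ref{lem:reducetodomain} realizes $S$ as a base change of a pair $\tilde{S}/\tilde{R}$ with $\tilde{R}$ a localized polynomial ring over $\mathbb{Z}$ (in particular a domain), so Lemma \ref{lem:basechange} together with the domain case yields projectivity of rank $N$ of $\Pi_R(S)_d$. The algebraic-closure assumption on the residue field is then dropped using the faithfully flat extension $R \to \overline{R}$ supplied by Lemma \ref{lem:localclosure} and faithfully flat descent of projectivity for finitely presented modules. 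Finally, for a general noetherian $R$, projectivity is checked locally prime by prime, reducing to the local case just treated.

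The main technical obstacle is the local-domain step: it is where fibrewise dimensions are glued into genuine freeness, and it is also where one must keep track of the finite generation of $\Pi_R(S)_d$ in a sufficiently explicit way for Nakayama to apply and for the torsion-freeness argument to give honest freeness. Everything downstream is essentially bookkeeping through Lemma \ref{lem:basechange} combined with standard flat-descent and local-global principles for finitely presented modules.
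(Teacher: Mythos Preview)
Your proposal is correct and follows the paper's proof essentially step for step; the only cosmetic difference is that in the local-domain case the paper invokes \cite[1.4.4]{GrothendieckSGA1} directly, whereas you spell out the underlying Nakayama-plus-torsion-freeness argument. All subsequent reduction steps (general domain via localization, local ring with algebraically closed residue field via Lemma~\ref{lem:reducetodomain}, arbitrary local ring via Lemma~\ref{lem:localclosure} and faithfully flat descent, and finally general $R$ via the local--global principle) match the paper exactly.
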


\begin{proof}
First let $R$ be a local domain with residue field $k$ and field of fractions $K$. By Corollary \ref{cor:freefield} and Lemma \ref{lem:basechange} we have for each degree $d$:
\[ \dim_K(K\tr_R \Pi_R(S)_d) = \dim_K(\Pi_K(K \tr_R S)_d) =  \dim_k(\Pi_k(k\tr_R S)_d)= \dim_k (k \tr_R \Pi_R(S)_d) \]
Then \cite[1.4.4]{GrothendieckSGA1} implies that $\Pi_R(S)$ is free of the stated ranks computed in Corollary \ref{cor:freefield}.\\[\medskipamount]
Next, let $R$ be any domain. Then  for each $\f{p}\in \Spec(R)$, $R_\f{p}\tr_R \Pi_R(S) \cong \Pi_{R_\f{p}}(R_{\f{p}} \otimes S)$ is a generalized preprojective algebra over the local domain $R_{\f{p}}$ and hence in each degree is a free module of the stated rank. As these ranks do not depend on the choice of $\f{p}$, Serre's theorem (see for example \cite{serre}) now implies that ${\Pi_R(S)}_d$ is projective of the stated rank.\\[\medskipamount]
Next, let $R$ be a (possibly non-reduced) local ring with algebraically closed residue field. Then by Lemma \ref{lem:reducetodomain} there is a domain $\tilde{R}$, a morphism $\tilde{R} \mor R$ and a ring $\tilde{S}$ such that $\tilde{S}/\tilde{R}$ is relative Frobenius of rank 4 and $S \cong \tilde{S} \otimes_{\tilde{R}} R$. By the above $\Pi_{\tilde{R}}(\tilde{S})_d$ is a projective $\tilde{R}$-module of the given ranks and hence $\Pi_R(S)_d = \Pi_{\tilde{R}}(\tilde{S})_d \otimes R$ is a projective $R$-module of the above rank.\\[\medskipamount]
To extend the result to general local rings, we invoke Lemma \ref{lem:localclosure} to find a faithfully flat morphism $\phi: R \mor \overline{R}$. By the above $\Pi_{\overline{R}}(\overline{R} \otimes S)_d \cong \overline{R} \otimes \Pi_R(S)_d$ is a free $\overline{R}$-module of the desired rank. By the faithfully flatness of $\phi$, $\Pi_R(S)_d$ is itself a free $R$-module of the desired rank.\\[\medskipamount]
Finally we extend the statement from local rings to general commutative rings by again applying Serre's theorem \cite{serre} .
\end{proof}

The following Lemma is a slightly more technical variation of Theorem \ref{thm:free} which will be required in the final section of this paper.

\begin{lemma} \label{lem:splithilbertseries}
$(1_R \cdot \Pi_R(S))_d$ and $(1_S \cdot \Pi_R(S))_d$ are projective $R$-modules of ranks 
\[ \rk((1_R \cdot \Pi_R(S))_d=\begin{cases} d+1 & \textrm{if $d$ is even} \\ 2(d+1) & \textrm{if $d$ is odd} \end{cases} \]
and
\[ \rk((1_S \cdot \Pi_R(S))_d=\begin{cases} 4(d+1) & \textrm{if $d$ is even} \\ 2(d+1) & \textrm{if $d$ is odd} \end{cases} \]
\end{lemma}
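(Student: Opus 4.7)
The approach is to exploit the fact that $1_R$ and $1_S$ are complementary orthogonal idempotents in $R \oplus S \subseteq \Pi_R(S)_0$, so that left multiplication gives an $R$-linear direct sum decomposition
\[ \Pi_R(S)_d = (1_R \cdot \Pi_R(S))_d \oplus (1_S \cdot \Pi_R(S))_d. \]
By Theorem \ref{thm:free}, the left-hand side is projective, and hence so is each summand. Moreover, the base-change isomorphism of Lemma \ref{lem:basechange} sends $1_{R'} \mapsto 1 \otimes 1_R$ and $1_{R' \otimes_R S} \mapsto 1 \otimes 1_S$, so each summand is compatible with base change. This reduces the rank computation to the same chain of reductions used in the proof of Theorem \ref{thm:free}.

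The base case is $R=k$ algebraically closed and $S=k^{\oplus 4}$. Here Lemma \ref{lem:classicpreprojective} identifies $\Pi_k(S)$ with the preprojective algebra of $\widetilde{D_4}$, with $1_k$ the central vertex idempotent and $1_S$ the sum of the four outer vertex idempotents. The rank of $(1_k \cdot \Pi_k(S))_d$ is the sum of the entries in the central row of the matrix $W_d$ of Lemma \ref{lem:etingoff}, whose generating function is
\[ \frac{(1+t^2)^2 + 4t(1+t^2)}{(1-t^2)^2(1+t^2)} = \frac{1+4t+t^2}{(1-t^2)^2}, \]
with coefficients $d+1$ in even degree and $2(d+1)$ in odd degree. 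Similarly the four outer rows contribute $4(1+t+t^2)/(1-t^2)^2$, yielding the claimed ranks for $(1_S \cdot \Pi_k(S))_d$.

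To extend to an arbitrary Frobenius algebra $F$ of rank $4$ over an algebraically closed field, I refine Lemma \ref{lem:ineqpirs}. Given $\frobdef{F}{G}$ realised by $D/k[[u]]$, the same Nakayama lifting argument, applied separately to the idempotent summands (which are compatible with base change), gives
\[ \dim_k((1_k \Pi_k(F))_d) \geq \dim_k((1_k \Pi_k(G))_d), \qquad \dim_k((1_F \Pi_k(F))_d) \geq \dim_k((1_G \Pi_k(G))_d). \]
The sums of both sides equal $\dim_k \Pi_k(F)_d = \dim_k \Pi_k(G)_d$ by Corollary \ref{cor:freefield}, forcing equalities. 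Walking through the deformation diagram \eqref{eq:deformat} propagates the split-case values to all six Frobenius algebras of rank $4$.

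Finally, the passage from algebraically closed residue field to general commutative ring is word-for-word the argument in Theorem \ref{thm:free}: reduce to local domains via Lemma \ref{lem:reducetodomain} and \cite[1.4.4]{GrothendieckSGA1}, then to arbitrary local rings via faithfully flat descent along Lemma \ref{lem:localclosure}, and finally to general rings by Serre's theorem. The main obstacle is really just the deformation refinement in paragraph~3 — checking that the idempotent-wise inequalities hold and are forced to be equalities. Everything else is either a direct consequence of Theorem \ref{thm:free} or a routine extraction of row sums from the matrix $W(t)$ already computed in Lemma \ref{lem:etingoff}.
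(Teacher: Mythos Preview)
Your argument is correct and in fact slightly more economical than the paper's. Both proofs start from the idempotent decomposition $\Pi_R(S)_d = (1_R\Pi_R(S))_d \oplus (1_S\Pi_R(S))_d$, deduce projectivity of the summands from Theorem~\ref{thm:free}, observe compatibility with base change, and then run the reduction chain of Theorem~\ref{thm:free} down to $k$ algebraically closed. The difference lies in how the field case is handled. The paper verifies the ranks at \emph{both} ends of the deformation diagram~(\ref{eq:deformat}) --- for $S=k^{\oplus 4}$ via the row/column sums of $W(t)$, and for $S=k[s,t]/(s^2,t^2)$ via the explicit Type~I/Type~II enumeration of Appendix~A.1 --- and then squeezes using Lemma~\ref{lem:ineqpirs}. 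You instead check only the split case $S=k^{\oplus 4}$, establish the idempotent-wise inequalities $\dim_k(1_k\Pi_k(F))_d \geq \dim_k(1_k\Pi_k(G))_d$ and $\dim_k(1_F\Pi_k(F))_d \geq \dim_k(1_G\Pi_k(G))_d$ along each deformation, and then force both to be equalities because their sum is already known to be an equality by Corollary~\ref{cor:freefield}. This is a clean trick that entirely bypasses the Appendix~A.1 computation for $k[s,t]/(s^2,t^2)$. Incidentally, your generating function $(1+4t+t^2)/(1-t^2)^2$ for $h_{1_k\Pi_k(S)}$ is the correct simplification; the $1+6t+t^2$ appearing in the paper is a typo, though the subsequent series expansion there is right.
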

\begin{proof}
We can write $\Pi_R(S) = 1_R \cdot \Pi_R(S) \oplus 1_S \cdot \Pi_R(S)$. This immediately implies that both modules are projective by Theorem \ref{thm:free}. Moreover, it is easy to see that this decomposition is preserved under both base change through a morphism $R\mor R'$  and Frobenius deformations $\frobdef{F}{G}$ in the obvious sense. From this, we can conclude, using an argument similar to the proof of Theorem \ref{thm:free} shows that it suffices to check the cases where $R=k$ and  $S=k^{\oplus 4}$ or $S=k[s,t]/(s^2,t^2)$.\\[\medskipamount] 
For the first case we notice that the values of the Hilbert series $h_{1_k \cdot \Pi_R(S)}(t)$ can be computed using the proof of Lemma \ref{lem:etingoff} by adding the entries in the first column of $W(t)$, giving 
\begin{eqnarray*} h_{1_k \cdot \Pi_k(S)}(t) & = & \frac{(1+t^2)^2 + 4\cdot t (1+t^2)}{(1-t^2)^2(1+t^2)} \\
& = & \frac{1+ 6t + t^2}{(1-t^2)^2} \\
& = & (1+6t+t^2) \sum_{l=0}^\infty (l+1)t^{2l} \\
& = &  \sum_{l=0}^\infty (2l+1)t^{2l} + \sum_{l=0}^\infty 2((2l+1)+1)t^{2l+1} \end{eqnarray*}
In a similar fashion, we find 
\begin{eqnarray*} h_{1_S \cdot \Pi_k(S)}(t) & = & \sum_{l=0}^\infty 4(2l+1)t^{2l} + \sum_{l=0}^\infty 2((2l+1)+1)t^{2l+1} \end{eqnarray*}
For the second case where we assume $S=k[s,t]/(s^2,t^2)$ this is a dreary calculation which follows from  the ``Type I''-``Type II''-classification of the generators of $\Pi_k(S)$ found in appendix $A.1$.
\end{proof}


\section{Base Change for $Z(\Pi_R(S))$ and $\rk(Z(\Pi_R(S))_d)$}
Throughout this section, $S$ will be relative Frobenius of rank $4$ over a noetherian ring $R$. In this section, we will prove some results describing the center of $\Pi_R(S)$. To ease notation, we will write $Z_d(R,S)$ for the degree $d$-part of the center of $\Pi_R(S)$.

\begin{theorem} \label{lem:centersplit}
$Z_d(R,S)$ is a split $R$-submodule of ${\Pi_R(S)}_d$ for each $d \in \mathbb{N}$.
\end{theorem}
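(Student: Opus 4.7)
The plan is to realise $Z_d(R,S)$ as the kernel of an explicit $R$-linear commutator map between finitely generated projective $R$-modules, and to deduce splitness by proving the image of that map is projective (equivalently, that its rank is locally constant on $\Spec R$).

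Since $\Pi_R(S)$ is generated as an $R$-algebra in degrees $0$ and $1$ and $R\subseteq \Pi_R(S)_0=R\oplus S$ is automatically central, an element $z\in \Pi_R(S)_d$ lies in the centre iff $[z,s]=0$ for all $s\in S$ and $[z,x]=0$ for all $x\in \Pi_R(S)_1=M\oplus N$. This produces the $R$-linear map
\[
\psi_d\colon \Pi_R(S)_d \longrightarrow \Hom_R\bigl(S,\Pi_R(S)_d\bigr)\oplus \Hom_R\bigl(\Pi_R(S)_1,\Pi_R(S)_{d+1}\bigr),\qquad z\mapsto \bigl([z,-],\,[z,-]\bigr),
\]
with $Z_d(R,S)=\ker(\psi_d)$. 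Both source and target are finitely generated projective $R$-modules by Theorem \ref{thm:free} and the Frobenius hypothesis on $S$, so splitness of $\ker(\psi_d)$ inside the projective $\Pi_R(S)_d$ is equivalent to projectivity of $\mathrm{Im}(\psi_d)$. As $R$ is Noetherian, the problem reduces to showing that the fibre rank $\f{p}\mapsto \dim_{k(\f{p})}\mathrm{Im}(\psi_d)\otimes_R k(\f{p})$ is a locally constant function of $\f{p}\in\Spec R$.

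Local constancy is to be verified along the reduction pipeline of diagram (\ref{diag:reduction}). The map $\psi_d$ commutes with any base change $R\to R'$ by Lemma \ref{lem:basechange}, so Lemmas \ref{lem:reducetodomain} and \ref{lem:localclosure} reduce the question to the case $R=k$ algebraically closed. There the classification of Lemma \ref{lem:classificationfrobenius} and the deformation graph of Lemma \ref{lem:deformations} cut matters down to finitely many explicit algebras; an adaptation of the upper semicontinuity argument of Lemma \ref{lem:ineqpirs} to the central subspace yields $\dim_k Z_d(k,F)\geq \dim_k Z_d(k,G)$ under $\frobdef{F}{G}$, and combining this with explicit dimension counts at the two extremal algebras $k^{\oplus 4}$ and $k[s,t]/(s^2,t^2)$ forces equality along every edge of (\ref{eq:deformat}), establishing the desired local constancy of the centre dimension.

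The principal obstacle is precisely these explicit dimension counts for the centre at the two extremal algebras. For $k^{\oplus 4}$ this should be tractable through the path-algebra description of Lemma \ref{lem:classicpreprojective}, by identifying the paths that commute with both the outer idempotents and the doubled arrows; for $k[s,t]/(s^2,t^2)$ the Type I / Type II classification from Appendix A.1 must be refined to pin down which monomials span the centre in each degree. Once these counts are in hand (and shown to agree with the rank formula appearing in the combined statement of \ref{lem:centersplit}, \ref{lem:basechangecenterrank4} and \ref{thm:centerprojectiverank2}), local freeness of $\mathrm{Im}(\psi_d)$ follows from the standard fibre-rank criterion for finitely generated modules over a Noetherian ring, and $Z_d(R,S)$ is a direct summand of $\Pi_R(S)_d$.
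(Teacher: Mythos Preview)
Your overall strategy mirrors the paper's: realise $Z_d(R,S)$ as the kernel of a commutator map $\psi_d$ (the paper's $\phi_{R,S}$, equation (\ref{eq:center4})), reduce to the field case via the deformation inequality, and compute $\dim_k Z_d(k,F)$ at the two extremal Frobenius algebras. The explicit centre computations you flag as the principal obstacle are indeed what the paper carries out (Corollary \ref{cor:dihedralcenter} via the Morita equivalence of Lemma \ref{lem:dihedralskew} for $k^{\oplus 4}$, and Lemma \ref{lem:explicitcenter} in Appendix A.2 for $k[s,t]/(s^2,t^2)$).

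The gap is the step ``local freeness of $\mathrm{Im}(\psi_d)$ follows from the standard fibre-rank criterion for finitely generated modules over a Noetherian ring''. That criterion requires $R$ to be \emph{reduced}: over $R=k[\epsilon]/(\epsilon^2)$ the module $R/(\epsilon)$ has constant fibre rank $1$ on the one-point space $\Spec R$ yet is not projective. So locally constant fibre rank of $\mathrm{Im}(\psi_d)$ is necessary but not sufficient, and your argument as written only establishes splitness when $R$ is reduced.

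The paper closes this gap in two moves. First, for a local \emph{domain} it gives an explicit matrix argument (Lemma \ref{cor:residuebasechange}): write $\phi_{R,S}$ as a matrix $\Phi$, use the residue-field rank to block-decompose $\Phi=\mathrm{Id}_r\oplus\Psi'$ with all entries of $\Psi'$ in $\f{m}$, and then compare with the fraction field $K$ (where the kernel has the same dimension, by the field computation) to force $\Psi'=0$, exhibiting the splitting directly. Second, for a non-reduced local ring $R$ with algebraically closed residue field it invokes Lemma \ref{lem:reducetodomain} to write $S/R$ as a base change of some $\tilde S/\tilde R$ with $\tilde R$ a domain; the splitting over $\tilde R$ then transports to $R$ because split monomorphisms remain split after base change. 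Your proposal cites Lemma \ref{lem:reducetodomain} but deploys it only to verify local constancy of the fibre rank, where it is not needed; its real role is to bypass the failure of the fibre-rank criterion over non-reduced rings.
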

\begin{theorem}\label{lem:basechangecenterrank4}
Let $R\mor R'$ a morphism of rings. Then the canonical  morphism
\[ Z(\Pi_R(S))\otimes_R R'\mor Z(\Pi_{R'}(S\otimes_R R'))\]
is an isomorphism.
\end{theorem}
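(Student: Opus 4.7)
The strategy is to realize $Z_d(R,S)$, for each $d$, as the kernel of an $R$-linear map whose formation is compatible with base change, and then to invoke Theorem \ref{lem:centersplit} to ensure that this kernel commutes with tensoring by $R'$. Since the grading of $\Pi_R(S)$ is $\mathbb{N}$-indexed, a standard argument shows that $Z(\Pi_R(S))=\bigoplus_d Z_d(R,S)$, so it suffices to prove the isomorphism in each degree.

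Because $\Pi_R(S)$ is generated as an $R$-algebra in degrees $0$ and $1$ (it is a quotient of $T_{R\ds S}(M\ds N)$ by relations in degree $2$), and because $[z,-]$ is a derivation, an element $z\in\Pi_R(S)_d$ is central if and only if $[z,\omega]=0$ for every homogeneous $\omega\in \Pi_R(S)_0\ds\Pi_R(S)_1$. Equivalently, $Z_d(R,S)$ is the kernel of the $R$-linear commutator map
\[
\phi_d\colon \Pi_R(S)_d\mor \Hom_R\bigl(\Pi_R(S)_0,\Pi_R(S)_d\bigr)\ds \Hom_R\bigl(\Pi_R(S)_1,\Pi_R(S)_{d+1}\bigr),\quad z\fun \bigl([z,-]|_{\Pi_0},\,[z,-]|_{\Pi_1}\bigr),
\]
which is $R$-linear because $R$ lies in the center of $\Pi_R(S)$.

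Next, $\Pi_R(S)_0=R\ds S$ and $\Pi_R(S)_1=M\ds N$ are both finitely generated projective $R$-modules (the first by the hypothesis on $S/R$, the second by Theorem \ref{thm:free}), so $\Hom_R$ out of either commutes with $-\tr_R R'$. Together with the identification $R'\tr_R\Pi_R(S)\cong\Pi_{R'}(R'\tr_R S)$ from Lemma \ref{lem:basechange} and the manifest naturality of the commutator, this identifies $R'\tr_R\phi_d$ with the analogous commutator map $\phi'_d$ for $\Pi_{R'}(R'\tr_R S)$. Theorem \ref{lem:centersplit} now enters crucially: it says that $Z_d(R,S)\hookrightarrow\Pi_R(S)_d$ is $R$-split, hence $\im(\phi_d)\cong\Pi_R(S)_d/Z_d(R,S)$ is a direct summand of the finitely generated projective $\Pi_R(S)_d$ and so is itself projective. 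Consequently the short exact sequence
\[
0\mor Z_d(R,S)\mor \Pi_R(S)_d\mor \im(\phi_d)\mor 0
\]
is split, remains exact after applying $R'\tr_R -$, and combines with the base-change identification above to yield
\[
R'\tr_R Z_d(R,S)\cong \ker(R'\tr_R\phi_d)\cong \ker(\phi'_d)=Z_d(R',R'\tr_R S);
\]
a routine diagram chase confirms that this composite is the canonical map in the statement.

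The indispensable input is the splitness provided by Theorem \ref{lem:centersplit}: without it, applying $R'\tr_R -$ to the defining short exact sequence is only right exact, so one would a priori obtain at best a surjection $R'\tr_R Z_d(R,S)\twoheadrightarrow Z_d(R',R'\tr_R S)$ rather than an isomorphism. Once that splitness is in hand, the kernel description of the center, the projectivity of the small graded pieces of $\Pi_R(S)$, and Lemma \ref{lem:basechange} combine formally to complete the argument.
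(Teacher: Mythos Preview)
Your argument is essentially the paper's own proof, just spelled out in more detail: the paper's one-line proof invokes exactly the two ingredients you use, namely Theorem~\ref{lem:centersplit} (splitness of $Z_d(R,S)\hookrightarrow\Pi_R(S)_d$) together with the base-change compatibility of the commutator map $\phi_{R,S}$ in \eqref{eq:center4}, and your $\phi_d$ is just the coordinate-free formulation of that map.
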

\begin{theorem} \label{thm:centerprojectiverank2}
$Z_d(R,S)$ is a projective $R$-module of rank

\[ \begin{cases} \frac{d}{4}+1 & \textrm{if } d \equiv 0 \ (mod \ 4) \\ \frac{d-2}{4} & \textrm{if } d \equiv 2 (mod \ 4) \\ 0 & \textrm{else} \end{cases}\]

\end{theorem}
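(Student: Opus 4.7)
Projectivity is immediate: by Theorem \ref{lem:centersplit}, $Z_d(R,S)$ sits as a direct summand of $\Pi_R(S)_d$, which is projective by Theorem \ref{thm:free}.

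For the rank formula, I would follow the reduction scheme of diagram \ref{diag:reduction}. Since the rank of a projective module can be checked locally on $\Spec R$ and localisation commutes with the center by Theorem \ref{lem:basechangecenterrank4}, one may reduce to $R$ local. Lemma \ref{lem:localclosure} then allows us to assume the residue field is algebraically closed, after which Lemma \ref{lem:reducetodomain} writes $S = \tilde S \otimes_{\tilde R} R$ for some Frobenius pair $\tilde S/\tilde R$ over a domain $\tilde R$ of characteristic zero. One further base change reduces the computation to determining $\dim_K Z_d(K, K \otimes_{\tilde R} \tilde S)$ with $K = \operatorname{Frac}(\tilde R)$; passing to the algebraic closure $\overline K$, it remains to compute $\dim_k Z_d(k, F)$ for $F$ one of the six Frobenius algebras over a characteristic-zero algebraically closed field $k$ classified by Lemma \ref{lem:classificationfrobenius}.

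I then claim this dimension is the same for all six entries of the deformation graph \ref{eq:deformat}. Suppose $\frobdef{F}{G}$ is realised by a $k[[u]]$-algebra $D$; Theorem \ref{lem:centersplit} over $k[[u]]$ exhibits $Z_d(k[[u]], D)$ as a summand of the projective $k[[u]]$-module $\Pi_{k[[u]]}(D)_d$, so it is free of some rank $m$. Applying Theorem \ref{lem:basechangecenterrank4} to $k[[u]] \mor k$ and $k[[u]] \mor k((u))$ gives $\dim_k Z_d(k, F) = m$ and $\dim_{k((u))} Z_d(k((u)), G \otimes_k k((u))) = m$; a further base change along $k \mor k((u))$ identifies the latter with $\dim_k Z_d(k, G)$. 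Hence all six centers share the same Hilbert function. Observe that this argument is markedly stronger than the inequality of Lemma \ref{lem:ineqpirs} used for Theorem \ref{thm:free}; the presence of a true base-change statement for centers allows us to verify the formula in a single case.

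It therefore suffices to treat $F = k^{\oplus 4}$ in characteristic zero. By Lemma \ref{lem:classicpreprojective} this is the preprojective algebra of $\widetilde{D_4}$, and the Morita equivalence of Lemma \ref{lem:dihedralskew} identifies its center with the invariant ring $k[x,y]^{G}$, where $G \subset SL_2(k)$ is the binary dihedral group of order $8$. Classical invariant theory furnishes a minimal generating set in degrees $4,4,6$ subject to a single relation of degree $12$ (the $D_4$ surface singularity), giving Hilbert series
\[ \frac{1+t^6}{(1-t^4)^2} \;=\; \sum_{l \geq 0}(l+1)\,t^{4l} \;+\; \sum_{l \geq 0}(l+1)\,t^{4l+6}, \]
which matches the claimed rank formula exactly. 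The main obstacle is this last identification: one must verify the Morita equivalence transports the natural grading correctly and invoke the invariant-theory computation. The preceding chain of reductions is routine given the base-change and deformation machinery already in place.
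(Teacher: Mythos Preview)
Your argument is correct and in several places cleaner than the paper's. For projectivity you simply invoke Theorem~\ref{lem:centersplit} and Theorem~\ref{thm:free}; the paper instead reruns the entire reduction chain (local domain $\to$ domain $\to$ local with algebraically closed residue $\to$ local $\to$ general) and appeals to \cite[1.4.4]{GrothendieckSGA1} at the base case, which is unnecessary once the splitting is known. For the rank, the paper follows the same reduction scheme but handles the field case differently: it proves the one-sided inequality $\dim_k Z_d(k,F)\ge \dim_k Z_d(k,G)$ for a Frobenius deformation $\frobdef{F}{G}$ (Lemma~\ref{lem:centerinequality}) and then pins the value down by computing \emph{both} endpoints of the deformation diagram, $k^{\oplus 4}$ and $k[s,t]/(s^2,t^2)$ (Corollary~\ref{cor:dihedralcenter} and Lemma~\ref{lem:explicitcenter}). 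Your use of Theorems~\ref{lem:centersplit} and~\ref{lem:basechangecenterrank4} over $k[[u]]$ upgrades the inequality to an equality and lets you get away with the single computation for $k^{\oplus 4}$.

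One caveat you should be aware of: in the paper's global logical structure the three theorems of \S4 are proved in an interlocking order, and the proofs of Theorems~\ref{lem:centersplit} and~\ref{lem:basechangecenterrank4} that you are citing already \emph{consume} the field case of Theorem~\ref{thm:centerprojectiverank2} (via Lemma~\ref{cor:fielddim} and Lemma~\ref{cor:residuebasechange}), which in turn rests on the two endpoint computations. So while your proof of Theorem~\ref{thm:centerprojectiverank2} itself needs only the $k^{\oplus 4}$ calculation, you have not eliminated the $k[s,t]/(s^2,t^2)$ computation from the paper as a whole; it is hidden inside the black boxes you invoke. Your remark that the base-change theorem ``allows us to verify the formula in a single case'' is therefore true for this proof in isolation but somewhat misleading about the overall economy.
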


The proofs of these theorems are heavily intertwined, we shall prove them using to the following diagram of implications:
\begin{eqnarray*}
 & \textrm{Theorem \ref{thm:centerprojectiverank2} when $R$ is a field} & \\
 & \Downarrow & \\
 & \textrm{Theorems \ref{thm:centerprojectiverank2} and \ref{lem:centersplit} when $R$ is a local domain} & \\ 
 & \Downarrow & \\
 & \textrm{Theorem \ref{lem:centersplit} for general $R$} & \\  
 & \Downarrow & \\
 & \textrm{Theorem \ref{lem:basechangecenterrank4} for general $R$} & \\   
 & \Downarrow & \\
 & \textrm{Theorem \ref{thm:centerprojectiverank2} for general $R$} & \\  
\end{eqnarray*}
In several of these steps we use the fact that in each degree the center $Z_d(R,S)$ can be obtained as kernel of a morphism between projective $R$-modules. For this recall from the discussion preceding \ref{def} that since $\Pi_r(S)_0=R\ds S$  and $S$ is a free $R$-module of rank 4. there exist an $R$-basis $\{1_R:=a^0_1 \ldots ,a^0_5\}$ for $\Pi_R(S)_0$. Moreover, there exists an $R$-basis of the form 
\[(a_i^1)_{i=1,\ldots, 8}:=\{e_1 ,\ldots,e_4,f_1 ,\ldots,f_4\}\] for $\Pi_R(S)_1$ such that $\lambda (e_if_j)=\delta_{ij}$ for some chosen generator $\lambda$ of the $S$-module $\Hom_R(S,R)$. Now, since $\Pi_R(S)$ is generated in degrees $0$ and $1$, for each degree $d$ there is a morphism
\begin{equation}\label{eq:center}
\phi_{R,S}: \Pi_R(S)_d \mor (\Pi_R(S)_d^{\oplus 5}) \bigoplus (\Pi_R(S)_{d+1}^{\oplus 8}): x\fun \Big( \left([x,a^0_i]\right)_i,\left( [x,a^1_j]\right)_j \Big)
\end{equation}
whose kernel is precisely $Z_d(R,S)$. In other words  there is a left-exact sequence of the form:
\begin{equation}\label{eq:center4}
0 \mor Z_d(R,S) \mor \Pi_R(S)_d \stackrel{\phi_{R,S}}{\mor} \Pi_R(S)_d^{\oplus 5} \ds \Pi_R(S)_{d+1}^{\oplus 8}
\end{equation}

In particular we obtain the following special case of Theorem \ref{lem:basechangecenterrank4}:
\begin{lemma}[flat base change]
\label{lem:flatbasechange}
Let $R\mor R'$ be a flat morphism of rings. Then the canonical map
$$R'\tr_R Z_d(R,S) \mor Z_d(R'\tr_R S)$$
is an isomorphism for each $d \in \mathbb{N}$.
\end{lemma}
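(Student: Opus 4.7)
The plan is to apply the flat base change functor $R' \otimes_R -$ directly to the defining left-exact sequence (\ref{eq:center4}) and identify the resulting kernel with $Z_d(R' \otimes_R S)$. This should be essentially automatic once the right identifications are made, so the main work is just to check compatibility of the commutator map under base change.

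First, I would start from the sequence
\[
0 \mor Z_d(R,S) \mor \Pi_R(S)_d \stackrel{\phi_{R,S}}{\mor} \Pi_R(S)_d^{\oplus 5} \ds \Pi_R(S)_{d+1}^{\oplus 8}
\]
recalled in (\ref{eq:center4}). Since $R \mor R'$ is flat, applying $R' \otimes_R -$ preserves left-exactness, giving
\[
0 \mor R' \tr_R Z_d(R,S) \mor R' \tr_R \Pi_R(S)_d \stackrel{1 \otimes \phi_{R,S}}{\mor} R' \tr_R \left( \Pi_R(S)_d^{\oplus 5} \ds \Pi_R(S)_{d+1}^{\oplus 8} \right).
\]

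Next, I would invoke Lemma \ref{lem:basechange} to obtain canonical isomorphisms $R' \tr_R \Pi_R(S)_e \cong \Pi_{R'}(R' \tr_R S)_e$ in every degree $e$, which commute with tensor products and direct sums. Writing $S' = R' \tr_R S$ and using that, by construction of $\Pi_{R'}(S')$, we may take the basis elements $\{1_{R'} \tr a_i^0\}$ and $\{1_{R'} \tr a_j^1\}$ as our degree-$0$ and degree-$1$ bases for $\Pi_{R'}(S')$ (the Frobenius form $1 \tr \lambda$ has dual bases $1 \tr e_i$, $1 \tr f_i$), the map $1 \otimes \phi_{R,S}$ is identified with the map $\phi_{R',S'}$ from (\ref{eq:center}) under these isomorphisms, because taking commutators with basis elements commutes with extension of scalars.

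Therefore the tensored left-exact sequence becomes exactly the defining sequence (\ref{eq:center4}) for $Z_d(R',S')$, and comparing kernels yields the desired canonical isomorphism
\[
R' \tr_R Z_d(R,S) \xrightarrow{\;\sim\;} Z_d(R' \tr_R S).
\]
There is no real obstacle here; the argument only uses flatness for preservation of left-exactness, together with the naturality of the base change isomorphism of Lemma \ref{lem:basechange} with respect to the (degree-0 and degree-1) generators used to define $\phi_{R,S}$. The nontrivial content of Theorem \ref{lem:basechangecenterrank4} lies in extending this from flat to arbitrary morphisms $R \mor R'$, which is what is addressed later in the section.
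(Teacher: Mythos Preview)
Your argument is correct and follows essentially the same route as the paper's proof: apply $R'\otimes_R-$ to the left-exact sequence (\ref{eq:center4}), use flatness to preserve left-exactness, and identify $1\otimes\phi_{R,S}$ with $\phi_{R',R'\otimes_R S}$ via the base-change isomorphism of Lemma~\ref{lem:basechange}. The paper's version is more terse, but the content is the same.
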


\begin{proof}
The construction of the morphism $\phi_{R,S}$ is compatible with base change by the proof of Lemma \ref{lem:basechange} and tensoring with flat modules preserves left exact sequences. Hence
\[ R' \otimes Z_d(R,S) = R' \otimes \ker(\phi_{R,S}) = \ker(R' \otimes \phi_{R,S}) = \ker ( \phi_{R', R' \otimes S}) = Z_d(R', R' \otimes S) \]
\end{proof}
As stated in Theorem \ref{lem:basechangecenterrank4} we will show that in fact the above result holds for \emph{arbitrary} morphisms. Following the technique of proof outlined in diagram (\ref{diag:reduction}) we shall first compute the dimension of $Z_d(R,S)$ in two specific cases:

\begin{lemma}\label{lem:dihedralskew}
Let $k$ be an algebraically closed field with $\ch(k)\neq 2$ and $F = k^{\oplus 4}$, then $\Pi_k(F)$ is Morita equivalent to  the skew group ring $k[x,y] \# BD_2$ where 
\[ BD_2 = \ < a,b \mid a^4=b^4=1, a^2=b^2, ab=ba^3 > \]
is the binary dihedral group of order 8 acting on $k[x,y]$ via
\[ a \cdot x = ix, \ a \cdot y = -iy, \ b \cdot x = y, \ b \cdot y = x \]
\end{lemma}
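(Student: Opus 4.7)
The plan is to recognize the stated Morita equivalence as an instance of the classical McKay correspondence for finite subgroups of $SL_2(k)$. First, by Lemma \ref{lem:classicpreprojective}, $\Pi_k(k^{\oplus 4})$ is isomorphic to the preprojective algebra of the star quiver with one central vertex and four outgoing arrows, which (after formal doubling) is precisely the extended Dynkin quiver $\widetilde{D_4}$. The problem thus reduces to identifying $\Pi(\widetilde{D_4})$ with $k[x,y]\#BD_2$ up to Morita equivalence.

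Next, I would verify that the formulas in the statement describe an honest embedding $BD_2 \hookrightarrow SL_2(k)$ on the vector space $V=kx\oplus ky$. Since $\ch(k)\neq 2$ and $k$ is algebraically closed, $i\in k$, and the assignments $a\mapsto \operatorname{diag}(i,-i)$ and $b\mapsto \bigl(\begin{smallmatrix}0&1\\ -1&0\end{smallmatrix}\bigr)$ (up to the standard sign conventions for the contragredient action) satisfy $a^4=b^4=1$, $a^2=b^2=-\mathrm{Id}$, and $bab^{-1}=a^{-1}$, giving $BD_2\cong Q_8\subset SL_2(k)$ with $|BD_2|=8$ invertible in $k$. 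In particular the skew group ring $k[x,y]\#BD_2$ is a well-behaved ($\ch k\nmid|G|$) Auslander-type algebra.

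I would then identify the McKay quiver of $BD_2\subset SL_2(k)$ as $\widetilde{D_4}$. The group $Q_8$ has exactly five irreducible $k$-representations: the four one-dimensional characters (factoring through $Q_8/\{\pm 1\}\cong(\mathbb{Z}/2)^2$) and the faithful two-dimensional representation $V$. A direct character computation of $V\otimes\rho_i$ for each irreducible $\rho_i$ shows that $V\otimes\rho_i=V$ for each one-dimensional $\rho_i$, while $V\otimes V$ decomposes as the sum of the four one-dimensional characters; this yields exactly the $\widetilde{D_4}$ graph with the $2$-dimensional representation at the central node and the four characters at the outer nodes.

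The Morita equivalence then follows from the classical McKay correspondence (Auslander, later formalized by Reiten-Van den Bergh and exploited by Crawley-Boevey-Holland): for a finite $G\subset SL_2(k)$ with $|G|$ invertible in $k$, the preprojective algebra of the McKay quiver of $G$ is Morita equivalent to $k[x,y]\#G$. Concretely, one takes the progenerator $P=\bigoplus_i k[x,y]\otimes_k\rho_i$ in $\operatorname{mod}(k[x,y]\#G)$, and shows that $\operatorname{End}_{k[x,y]\#G}(P)$ is isomorphic to the preprojective algebra of the McKay quiver, with the preprojective relation $\sum(aa^*-a^*a)=0$ arising from the Koszul relation $xy-yx=0$ in $k[x,y]$ under the natural identification of $\operatorname{Hom}_G(\rho_i,V\otimes V\otimes\rho_j)$ with the degree-two component. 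The main obstacle is that the correspondence is not proved in the body of this paper, so one must either cite it cleanly or write out the bimodule explicitly and verify the preprojective relations — a standard but notation-heavy exercise in characters and $G$-equivariant multiplication.
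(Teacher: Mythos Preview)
Your proposal is correct and follows essentially the same route as the paper: identify $\Pi_k(k^{\oplus 4})$ with the preprojective algebra of $\widetilde{D_4}$ via Lemma~\ref{lem:classicpreprojective}, recognize $\widetilde{D_4}$ as the McKay quiver of $BD_2\subset SL_2(k)$, and invoke the McKay-type Morita equivalence (the paper cites \cite[Corollary~4.2]{Preprojectivemorita} and \cite{ReitenVDB} for this last step). Your version is slightly more explicit in verifying the embedding into $SL_2(k)$ and computing the McKay quiver via characters, but this is elaboration rather than a different argument.
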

\begin{proof}
Let $Q$ be be the extended Dynkin quiver $\widetilde{D_4}$ and $\overline{Q}$ the formally doubled quiver. Then $\overline{Q}$ is the McKay-quiver of $BD_2$ acting on $k[x,y]$ through the rule described above. Now, by \cite[Corollary 4.2]{Preprojectivemorita} (which was already announced in \cite{ReitenVDB}) the (classical) preprojective algebra $\Pi_k(Q)$ is Morita equivalent to $k[x,y] \# BD_2$. The result now follows from Lemma \ref{lem:classicpreprojective}.
\end{proof}

\begin{lemma}\label{lem:dihedralcenter}
Let $k$ be an algebraically closed field with $\ch(k)\neq 2$, then there is an isomorphism of rings:
\[ Z\left(\Pi_k(k^{\oplus 4})\right) \cong \frac{k[A,B,C]}{(C^2-B(A^2-4B^2))} \]
Where $A,B$ are homogeneous elements in degree 4 and $C$ is a homogeneous element in degree 6.
\end{lemma}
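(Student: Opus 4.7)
The plan is to combine the Morita equivalence of Lemma \ref{lem:dihedralskew} with an explicit computation of the invariant ring $k[x,y]^{BD_2}$. Since Morita equivalence preserves centers, we obtain $Z(\Pi_k(k^{\oplus 4})) \cong Z(k[x,y] \# BD_2)$. For a faithful action of a finite group on a commutative domain, a direct commutator calculation shows $Z(R \# G) = R^G$: any central $z = \sum_g r_g g$ must satisfy $r\, r_g = r_g\, g(r)$ for all $r \in R$, forcing $r_g = 0$ when $g \neq 1$ by faithfulness, and then $r_1 \in R^G$ follows from commutation with each $h \in G$. The $BD_2$-action on $kx \oplus ky$ is faithful, so the problem reduces to computing $k[x,y]^{BD_2}$.

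Next, I would exhibit the invariants $A := x^4 + y^4$, $B := x^2 y^2$ in degree $4$ and $C := xy(x^4 - y^4)$ in degree $6$, verify that each is $BD_2$-fixed, and check that
\[ C^2 = x^2 y^2 (x^4 - y^4)^2 = B\bigl((x^4+y^4)^2 - 4x^4 y^4\bigr) = B(A^2 - 4B^2), \]
thereby producing a well-defined graded $k$-algebra homomorphism $\phi\colon k[A,B,C]/(C^2 - B(A^2-4B^2)) \longrightarrow k[x,y]^{BD_2}$. To prove $\phi$ is an isomorphism, I would then compute and match Hilbert series on both sides. Molien's formula applied to the $BD_2$-action on $k^2$ (one identity, one element acting as $-I$, and six elements of order $4$ with characteristic polynomial $\lambda^2+1$) gives
\[ H_{k[x,y]^{BD_2}}(t) = \frac{1}{8}\left(\frac{1}{(1-t)^2} + \frac{1}{(1+t)^2} + \frac{6}{1+t^2}\right) = \frac{1+t^6}{(1-t^4)^2}, \]
while the domain of $\phi$, being a graded complete intersection with generator degrees $4,4,6$ and a single relation in degree $12$, has Hilbert series $(1-t^{12})/((1-t^4)^2(1-t^6)) = (1+t^6)/(1-t^4)^2$.

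With matching Hilbert series, $\phi$ is an isomorphism iff it is injective in each degree; since $A$ and $B$ are algebraically independent in $k[x,y]$ (a quick check by specialising any relation $f(A,B)=0$ at $y=0$), the image of $\phi$ has Krull dimension at least $2$, and because the domain is itself a $2$-dimensional integral domain the kernel of $\phi$ must vanish. The main obstacle I anticipate is correctly identifying the degree-$6$ invariant $C$ and recognising the defining hypersurface relation, either by a direct enumeration of $BD_2$-invariant monomials in small degrees or by invoking the known description of the $D_4$ Kleinian singularity; once $C$ and the relation are in hand, the Hilbert series comparison closes the argument formally.
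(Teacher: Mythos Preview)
Your proposal is correct and follows the same overall route as the paper: invoke the Morita equivalence of Lemma~\ref{lem:dihedralskew}, identify $Z(k[x,y]\#BD_2)$ with the invariant ring $k[x,y]^{BD_2}$, exhibit the generators $A=x^4+y^4$, $B=x^2y^2$, $C=xy(x^4-y^4)$, and verify the relation $C^2=B(A^2-4B^2)$.

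The difference lies only in how the presentation is shown to be complete. The paper works out an explicit $k$-basis of $k[x,y]^{BD_2}$ by writing down the two coefficient conditions coming from the actions of $a$ and $b$, reads off that $A,B,C$ generate, and leaves the uniqueness of the relation to the reader. You instead compute the Hilbert series of $k[x,y]^{BD_2}$ via Molien's formula (valid since $\ch(k)\neq 2$ so $|BD_2|=8$ is invertible) and match it against that of the hypersurface, then close with a Krull-dimension argument using that $C^2-B(A^2-4B^2)$ is irreducible. Your version is slightly more systematic and actually justifies the step the paper leaves implicit; the paper's direct enumeration, on the other hand, makes the invariant basis visible without invoking representation-theoretic machinery. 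Either is a standard way to pin down a finite-group invariant ring, so the two arguments should be regarded as minor variants of the same proof.
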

\begin{proof}
By Lemma \ref{lem:dihedralskew} and the fact that the center of a ring is invariant under Morita equivalence, we have
\[ Z\left(\Pi_k(k^{\oplus 4})\right) \cong Z \big( k[x,y] \# BD_2 \big) = k[x,y]^{BD_2} \]
Hence we need to find the invariants in $k[x,y]$ under the action of 
$BD_2$ where the generators $a$ and $b$ act on $(x,y)$ through the rule
\[ a \mapsto \begin{bmatrix}  0&1\\
-1 &0 \end{bmatrix} \textrm{ and } b \mapsto \begin{bmatrix}  \xi_4 & 0\\
0 & \xi_4^3 \end{bmatrix} \]
where $\xi_4$ is a primitive 4th root of unity, i.e.
\[ a \cdot x^m y^n = (-1)^n x^n y^m \ \ \ \textrm{ and } \ \ \ b \cdot x^my^n = \xi_4^{m+3n} x^m y^n \]
Let $P(x,y) = \sum_{m,n} c_{m,n} x^m y^n \in k[x,y]$. Then $a \cdot P = P$ implies $c_{n,m} = (-1)^m c_{m,n}$ for all $m,n$ and $b \cdot P = P$ implies $c_{m,n} = 0$ unless $m + 3n \equiv 0 \ (mod \ 4)$. In particular $k[x,y] \# BD_2$ the following $k$-basis:
\begin{align*} & \{ x^{4i-2j}y^{2j} + x^{2j}y^{4i-2j} \mid i,j \in \mathbb{N}, j \leq i \} \\
 \cup & \ \{ x^{4i-2j+1}y^{2j+1} - x^{2j+1}y^{4i-2j+1} \mid i,j \in \mathbb{N}, j < i \} \end{align*}
Hence, as a $k$-algebra it is generated by $A=x^4 + y^4$, $B=x^2y^2$, $C=x^5y-xy^5$ which satisfy the relation $C^2-B(A^2-4B^2)=0$ in degree 12. We leave it to the reader to check that this is the only possible relation 
\end{proof}


\begin{corollary} \label{cor:dihedralcenter}
Let $k$ be an algebraically closed field (possibly of characteristic 2), then 
\[ \dim_k \left (Z_d \left(k,k^{\oplus 4}\right) \right) = \begin{cases} \frac{d}{4}+1 & \textrm{if } d \equiv 0 \ (mod \ 4) \\ \frac{d-2}{4} & \textrm{if } d \equiv 2 \ (mod \ 4) \\ 0 & \textrm{else} \end{cases} \]
\end{corollary}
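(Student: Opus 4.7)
The plan is to compute the formula directly when $\ch(k)\neq 2$ via Lemma \ref{lem:dihedralcenter}, then transfer it to the $\ch(k)=2$ case through an integral model.

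In characteristic $\neq 2$, Lemma \ref{lem:dihedralcenter} identifies $Z(\Pi_k(k^{\oplus 4}))$ with the graded hypersurface algebra $k[A,B,C]/(C^2 - B(A^2 - 4B^2))$ where $|A|=|B|=4$ and $|C|=6$. The relation is a homogeneous nonzero-divisor of degree $12$ in $k[A,B,C]$, so the Hilbert series is
\[ \frac{1-t^{12}}{(1-t^4)^2(1-t^6)} \;=\; \frac{1+t^6}{(1-t^4)^2} \;=\; (1+t^6)\sum_{j\geq 0}(j+1)\,t^{4j}, \]
whose $t^d$-coefficient equals the formula in the statement.

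For $\ch(k)=2$ I would work universally over $\mathbb{Z}$. By Lemma \ref{lem:classicpreprojective}, $\Pi_{\mathbb{Z}}(\mathbb{Z}^{\oplus 4})$ is the classical preprojective algebra of $\widetilde{D_4}$ over $\mathbb{Z}$; by Theorem \ref{thm:free}, each graded piece is a finitely generated free abelian group of the stated rank. The morphism $\phi_{\mathbb{Z}}$ from (\ref{eq:center4}) is then a map of finitely generated free abelian groups, so both $Z_d(\mathbb{Z},\mathbb{Z}^{\oplus 4})=\ker(\phi_{\mathbb{Z}})$ and $\mathrm{Im}(\phi_{\mathbb{Z}})$ are themselves free. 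Flat base change (Lemma \ref{lem:flatbasechange}) along $\mathbb{Z}\hookrightarrow\overline{\mathbb{Q}}$ combined with the previous paragraph gives
\[ \mathrm{rank}_{\mathbb{Z}}\,Z_d(\mathbb{Z},\mathbb{Z}^{\oplus 4}) \;=\; \dim_{\overline{\mathbb{Q}}}Z_d(\overline{\mathbb{Q}},\overline{\mathbb{Q}}^{\oplus 4}) \;=\; N(d), \]
where $N(d)$ denotes the claimed formula. Since $\mathrm{Im}(\phi_{\mathbb{Z}})$ is free, the short exact sequence $0 \to Z_d(\mathbb{Z}) \to \Pi_{\mathbb{Z}}(\mathbb{Z}^{\oplus 4})_d \to \mathrm{Im}(\phi_{\mathbb{Z}}) \to 0$ splits and stays exact after $-\otimes_{\mathbb{Z}}k$, producing an inclusion of central elements $Z_d(\mathbb{Z})\otimes_{\mathbb{Z}}k \hookrightarrow Z_d(k,k^{\oplus 4})$ that yields $\dim_k Z_d(k,k^{\oplus 4}) \geq N(d)$.

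The main obstacle will be the reverse inequality. The plan here is to exhibit $A,B,C$ as explicit integer-coefficient polynomials in the arrows of the doubled quiver $\overline{\widetilde{D_4}}$, verify directly inside $\Pi_{\mathbb{Z}}(\mathbb{Z}^{\oplus 4})$ that they are central and satisfy the integral identity $C^2 = B(A^2-4B^2)$, and observe that the graded subring $R := \mathbb{Z}[A,B,C]/(C^2 - B(A^2-4B^2)) \hookrightarrow Z(\Pi_{\mathbb{Z}}(\mathbb{Z}^{\oplus 4}))$ has graded $\mathbb{Z}$-rank $N(d)$ in each degree. The quotient $Z_d(\mathbb{Z})/R_d$ is then torsion, vanishes after $-\otimes_{\mathbb{Z}}\mathbb{Q}$, and also after $-\otimes_{\mathbb{Z}}\mathbb{F}_p$ for every odd prime $p$ (by the characteristic $\neq 2$ case applied to $\overline{\mathbb{F}_p}$); ruling out residual $2$-primary torsion—the real content of the argument—requires either a direct inspection of the presentation or an explicit check that $R_d\otimes\mathbb{F}_2 \to Z_d(\mathbb{Z})\otimes\mathbb{F}_2$ is surjective. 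Once this is established, tensoring with $k$ yields a graded description of $Z(\Pi_k(k^{\oplus 4}))$ of Hilbert series $(1+t^6)/(1-t^4)^2$ in characteristic $2$ as well, which supplies the matching upper bound.
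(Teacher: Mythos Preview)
Your characteristic $\neq 2$ computation is the same as the paper's: both read off the Hilbert series from the presentation in Lemma \ref{lem:dihedralcenter}.

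The characteristic $2$ argument has a genuine gap. The integral model correctly gives the \emph{lower} bound, since the split sequence over $\mathbb{Z}$ stays exact after $-\otimes_{\mathbb{Z}} k$ and embeds $Z_d(\mathbb{Z})\otimes k$ into $Z_d(k,k^{\oplus 4})$. But nothing you do with the integral center bounds $\dim_k Z_d(k,k^{\oplus 4})$ from above. Concretely, $Z_d(k,k^{\oplus 4})/\bigl(Z_d(\mathbb{Z})\otimes k\bigr)\cong\Tor_1^{\mathbb{Z}}\bigl(V_{\mathbb{Z}}/\im(\phi_{\mathbb{Z}}),\,k\bigr)$, where $V_{\mathbb{Z}}$ is the target of $\phi_{\mathbb{Z}}$; this vanishes if and only if $\coker(\phi_{\mathbb{Z}})$ has no $2$-torsion. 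Your proposed check---that $Z_d(\mathbb{Z})/R_d$ has no $2$-torsion---concerns a \emph{different} quotient and says nothing about this $\Tor$ group. So your final sentence, that tensoring with $k$ ``yields a graded description of $Z(\Pi_k(k^{\oplus 4}))$'', is precisely the assertion to be proved rather than a consequence of what precedes it. The paper sidesteps this entirely by citing Schedler \cite[Theorem 10.1.1]{Schedler}, which computes the center of the preprojective algebra of $\widetilde{D_4}$ over any field; to make your route work you would instead have to show directly that $\coker(\phi_{\mathbb{Z}})$ is $2$-torsion-free, which amounts to a direct computation of $Z_d(\mathbb{F}_2,\mathbb{F}_2^{\oplus 4})$ and is no easier than the original problem.
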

\begin{proof}
Combining Lemma \ref{lem:classicpreprojective} with T. Schedler's result \cite[Theorem 10.1.1.]{Schedler} we may in fact assume $k$ has characteristic different from 2. In this case the result follows from an explicit computation using the presentation exhibited in Lemma \ref{lem:dihedralcenter}.
\end{proof}


For the second specific case we have:
\begin{lemma}\label{lem:explicitcenter}
Let $k$ be a field, then
\[ Z(\Pi_k(k[s,t]/(s^2,t^2))) \cong \frac{k[A,B,C]}{(C^2)} \]
Where $A,B$ are homogeneous elements of degree 4 and $C$ is a homogeneous element of degree 6. In particular the Hilbert series of $Z(\Pi_k(k[s,t]/(s^2,t^2)))$ is the same as in Corollary \ref{cor:dihedralcenter}.
\end{lemma}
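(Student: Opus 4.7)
\emph{Plan.} The proof has two ingredients: (i) an explicit construction of central elements $A, B \in \Pi_k(S)_4$ and $C \in \Pi_k(S)_6$ satisfying $C^2 = 0$, where $S := k[s,t]/(s^2,t^2)$; and (ii) a Hilbert series argument showing that these elements generate the center with no further relations.

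For part (i), I would lean on the explicit description of $\Pi_k(S)$ coming from the Type I / Type II classification of appendix A.1. With the $k$-basis $e_1 = 1, e_2 = s, e_3 = t, e_4 = st$ of $S$ and the dual basis $f_1 = st, f_2 = t, f_3 = s, f_4 = 1$ with respect to the Frobenius form $\lambda$ defined by $\lambda(st) = 1$ and $\lambda(1)=\lambda(s)=\lambda(t)=0$, I would write down explicit polynomial expressions of degrees $4, 4, 6$ in the generators, verify centrality by checking commutation with the degree $0$ and degree $1$ generators using the map $\phi_{k,S}$ of (\ref{eq:center}), and verify the relation $C^2 = 0$ by direct multiplication using the two degree $2$ defining relations from Definition \ref{def}.

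For part (ii), the Hilbert series of $k[A,B,C]/(C^2)$ with $|A| = |B| = 4$ and $|C| = 6$ is $(1 + t^6)/(1 - t^4)^2$, whose $t^d$-coefficients coincide exactly with the formula in Corollary \ref{cor:dihedralcenter}. Once step (i) provides a ring morphism $k[A,B,C]/(C^2) \to Z(\Pi_k(S))$, it suffices to show $\dim_k Z_d(k, S)$ is bounded above by the $t^d$-coefficient of $(1+t^6)/(1-t^4)^2$ for all $d$ in order to conclude that this morphism is a graded isomorphism (upgrading simultaneously to the Hilbert series claim). I would obtain this upper bound by running through the Type I/II basis of $\Pi_k(S)_d$ in each degree and identifying which elements are central, using the same commutator calculation as in (i).

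The main obstacle is the explicit bookkeeping in both steps, particularly the enumeration of central elements in each degree, since no Morita-type shortcut analogous to Lemma \ref{lem:dihedralskew} is available for the nonreduced algebra $k[s,t]/(s^2,t^2)$. A tempting alternative is to transfer information from the known center $Z(\Pi_k(k^{\oplus 4}))$ of Lemma \ref{lem:dihedralcenter} via the deformation chain (\ref{eq:deformat}); however, Lemma \ref{lem:ineqpirs} controls only the total dimension of $\Pi_k(-)_d$ rather than the subspace $Z_d$, and centrality is not directly stable under the relevant specializations, so such a pathway cannot replace the explicit construction of $A, B, C$ above.
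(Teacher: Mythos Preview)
Your plan is essentially the same as the paper's: construct explicit central elements $A,B,C$, verify $C^2=0$, and then compare Hilbert series after a degree-by-degree enumeration of central elements using the Type I/II classification. Two points are worth flagging.

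First, the paper has a shortcut you do not mention. Rather than writing $A,B$ directly in degree~4 and checking commutation with all generators, it introduces the degree-2 elements
\[
u = sef + efs + fse, \qquad v = tef + eft + fte,
\]
which are \emph{normalizing} for the sign automorphisms $t\mapsto -t$ (resp.\ $s\mapsto -s$); since these automorphisms are involutions, $A:=u^2$ and $B:=v^2$ are automatically central. These same elements $u,v$ then reappear in the enumeration step, where every candidate central element is rewritten as $u^{\epsilon_m}A^{m'}B^{n'}v^{\epsilon_n}$ or $u^{\epsilon_m}A^{m'-1}CB^{n'-1}v^{\epsilon_n}$, making the centrality/non-centrality analysis transparent.

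Second, there is a genuine logical gap in your part (ii) as stated. Having a ring map $\psi\colon k[A,B,C]/(C^2)\to Z(\Pi_k(S))$ together with an upper bound $\dim_k Z_d(k,S)\le c_d$ does \emph{not} by itself force $\psi$ to be a graded isomorphism: nothing prevents $\psi$ from having a kernel. You must supply either injectivity or surjectivity separately. The paper establishes injectivity first, via a short bidegree argument (the elements $A,B,C$ have bidegrees $(2,0),(0,2),(2,2)$, so any relation $p_1(A,B)+C\,p_2(A,B)=0$ forces $p_1=p_2=0$); only then does the dimension comparison give surjectivity. Alternatively, if your enumeration in each degree not only counts the central elements but exhibits them as monomials in $A,B,C$, you obtain surjectivity directly, and the upper bound then yields injectivity. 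Either patch is easy, but one of them is required.
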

\begin{proof}
We defer the proof to section $A.2$ of the appendix. 
\end{proof}

We shall use the following lemma to compute  $\dim_k(Z_d(k,F))$ in the more general case where $F$ is a Frobenius algebra of dimension 4 over a (possibly not algebraically closed) field $k$.
\begin{lemma}\label{lem:centerinequality}
Let $F$ and $G$ be two Frobenius algebras over a field $k$ such that $\frobdef{F}{G}$. Then for each $d \in \d{N}$,
\[ \dim_k(Z_d(k,F)) \geq \dim_k(Z_d(k,G)) \]
\end{lemma}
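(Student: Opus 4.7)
The plan is to mimic the proof of Lemma \ref{lem:ineqpirs}, but applied to the central submodule rather than the whole algebra. The key trick is that $R := k[[u]]$ is a PID, so finitely generated torsion-free $R$-modules are free, which will let us compare ranks at the generic and special fibers.

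Let $K := k((u))$ and let $D$ be the $R$-algebra realizing the Frobenius deformation $\frobdef{F}{G}$. For each $d$, consider the left-exact sequence (\ref{eq:center4}) in this setting:
\[ 0 \mor Z_d(R,D) \mor \Pi_R(D)_d \stackrel{\phi_{R,D}}{\mor} \Pi_R(D)_d^{\oplus 5} \oplus \Pi_R(D)_{d+1}^{\oplus 8}. \]
By Theorem \ref{thm:free} each of $\Pi_R(D)_d$ and $\Pi_R(D)_{d+1}$ is a free $R$-module of finite rank. Consequently $Z_d(R,D)$, being a submodule of a finitely generated module over the noetherian PID $R$, is itself a finitely generated torsion-free, hence free, $R$-module. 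Moreover, the quotient $\Pi_R(D)_d/Z_d(R,D)$ embeds via $\phi_{R,D}$ into a free module and is therefore also torsion-free, hence free; this forces the short exact sequence
\[ 0 \mor Z_d(R,D) \mor \Pi_R(D)_d \mor \Pi_R(D)_d / Z_d(R,D) \mor 0 \]
to split, so $Z_d(R,D)$ is a direct summand of $\Pi_R(D)_d$.

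Next I would compute $\rk_R Z_d(R,D)$ by passing to the generic fiber. Since $R \mor K$ is flat, Lemma \ref{lem:flatbasechange} and Lemma \ref{lem:basechange} give
\[ K \otimes_R Z_d(R,D) \cong Z_d(K, K \otimes_R D) \cong Z_d(K, G \otimes_k K). \]
Applying Lemma \ref{lem:flatbasechange} again along the flat morphism $k \mor K$ (with $D$ replaced by $G$) identifies this with $K \otimes_k Z_d(k,G)$, so
\[ \rk_R Z_d(R,D) \;=\; \dim_K\bigl(K \otimes_k Z_d(k,G)\bigr) \;=\; \dim_k Z_d(k,G). \]

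Finally I would compare with the special fiber. Because $Z_d(R,D)$ is a direct summand of $\Pi_R(D)_d$, tensoring with $k$ over $R$ preserves the injection, so
\[ Z_d(R,D) \otimes_R k \;\hookrightarrow\; \Pi_R(D)_d \otimes_R k \;\cong\; \Pi_k(F)_d. \]
The image lands in the kernel of $\phi_{k,F}$ (since the construction of $\phi_{R,D}$ is compatible with base change, i.e.\ $\phi_{R,D} \otimes_R k = \phi_{k,F}$), that is, in $Z_d(k,F)$. Therefore
\[ \dim_k Z_d(k,F) \;\geq\; \dim_k \bigl(Z_d(R,D)\otimes_R k\bigr) \;=\; \rk_R Z_d(R,D) \;=\; \dim_k Z_d(k,G), \]
as required. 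The main technical point, and the only place where rank $4$ (rather than a general Frobenius deformation) is implicit, is the appeal to Theorem \ref{thm:free} to guarantee that $\Pi_R(D)_d$ is $R$-free; the rest is essentially formal bookkeeping with left-exactness and the PID structure of $k[[u]]$.
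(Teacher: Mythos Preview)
Your proof is correct and follows the same overall strategy as the paper: pass to the deformation $D$ over $R=k[[u]]$, identify the centres at the generic and special fibres with $Z_d(k,G)$ and $Z_d(k,F)$ via base change, and compare. The only difference is in how the final comparison is made. The paper represents $\phi_{R,D}$ by a matrix $\Phi$ (implicitly using the freeness of $\Pi_R(D)_d$ from Theorem~\ref{thm:free}, just as you do) and invokes the bare semicontinuity fact $\dim_k \ker(\Phi_k) \geq \dim_K \ker(\Phi_K)$ for a matrix over a DVR. You instead exploit the PID structure of $k[[u]]$ to show that $Z_d(R,D)$ is a direct summand of $\Pi_R(D)_d$, and then read off the inequality from the resulting injection $Z_d(R,D)\otimes_R k \hookrightarrow Z_d(k,F)$. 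Your route is slightly longer but establishes more along the way --- the splitting you prove is precisely the DVR case of Theorem~\ref{lem:centersplit}, which the paper only obtains later --- while the paper's version compresses the same content into a one-line rank inequality.
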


\begin{proof}
Let $D$ be the algebra deforming $F$ to $G$ provided by Definition \ref{def:frobeniusdeformation} and denote $R=k[[u]]$, $K=k((u))$. As in  the left exact sequence (\ref{eq:center}), we write $Z_d(R,D)=\ker(\phi)$ and let $\Phi$ be the matrix corresponding to $\phi$.\\
Let $\Phi_K$ denote the same matrix with coefficients viewed in the fraction field $K$ and $\Phi_k$ denote the matrix with coefficients viewed in the residue field $k$. Then by construction, 
\[ \ker(\Phi_K)=\ker(K\tr_R \phi)=Z_d(K,K\tr_RD) \]
and
\[ \ker(\Phi_k)=\ker(k\tr_R \phi)= Z_d(k,k\tr_RD) \]
Now,
\begin{align*}
\dim_k(Z_d(k,G)) & =\dim_K(K\tr_k(Z_d(k,G))\\
& = \dim_K(Z_d(K,K\tr_k G)) \\ & =\dim_K(Z_d(K,K\tr_R D)) \\ & =\dim_K( \ker(\Phi_K))
\end{align*}
Since clearly $\dim_k(\ker(\Phi_k) \ge \dim_K (\ker(\Phi_K))$, the claim follows.
\end{proof}

\begin{lemma} \label{cor:fielddim}
Let $F$ be a Frobenius algebra of dimension $4$ over a field $k$. Then
\begin{equation} \label{eq:preprojcenterdim} \dim_k \left( Z_d \left(k,F \right) \right) = \begin{cases} \frac{d}{4}+1 & \textrm{if } d \equiv 0 \ (mod \ 4) \\ \frac{d-2}{4} & \textrm{if } d \equiv 2 \ (mod \ 4) \\ 0 & \textrm{else} \end{cases} \end{equation}
\end{lemma}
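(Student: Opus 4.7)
The plan is a two-step reduction: first pass to the algebraically closed case via flat base change, then bootstrap the full formula from the two previously computed extreme cases by running the deformation graph of Lemma \ref{lem:deformations}.

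First I would reduce to the case where $k$ is algebraically closed. Let $\overline{k}$ be an algebraic closure of $k$. Since $\overline{k}/k$ is (faithfully) flat, Lemma \ref{lem:flatbasechange} gives a canonical isomorphism
\[ \overline{k}\otimes_k Z_d(k,F)\;\cong\;Z_d(\overline{k},\overline{k}\otimes_k F), \]
so $\dim_k Z_d(k,F)=\dim_{\overline{k}} Z_d(\overline{k},\overline{k}\otimes_k F)$. Moreover $\overline{k}\otimes_k F$ is a commutative Frobenius algebra of dimension $4$ over $\overline{k}$ (Lemma \ref{lem:basechange}), so we may assume from the start that $k$ is algebraically closed and that $F$ is one of the six algebras listed in Lemma \ref{lem:classificationfrobenius}.

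Next I would invoke the two extremal computations. Lemma \ref{lem:explicitcenter} shows that the Hilbert series of $Z(\Pi_k(k[s,t]/(s^2,t^2)))$ is given by the formula (\ref{eq:preprojcenterdim}), and Corollary \ref{cor:dihedralcenter} shows that $\dim_k Z_d(k,k^{\oplus 4})$ is given by the very same formula. So the two endpoints of the deformation diagram agree. Now the diagram of Frobenius deformations (\ref{eq:deformat}) exhibits, for every other algebra $F$ in the list, a chain
\[ k[s,t]/(s^2,t^2)\;\dashrightarrow\;\cdots\;\dashrightarrow\;F\;\dashrightarrow\;\cdots\;\dashrightarrow\;k^{\oplus 4}. \]
Applying Lemma \ref{lem:centerinequality} to each arrow yields
\[ \dim_k Z_d(k,k[s,t]/(s^2,t^2))\;\ge\;\cdots\;\ge\;\dim_k Z_d(k,F)\;\ge\;\cdots\;\ge\;\dim_k Z_d(k,k^{\oplus 4}). \]
Since the two outer dimensions coincide with the formula (\ref{eq:preprojcenterdim}), the sandwich forces equality throughout, and in particular $\dim_k Z_d(k,F)$ is given by the same formula for every $F$ in the list. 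Combined with the first step this proves the lemma for all fields.

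I do not foresee a serious obstacle: Lemma \ref{lem:centerinequality} and the deformation diagram are already in place, and the two ``hard'' dimension computations (the extremal cases) are exactly the contents of Lemma \ref{lem:explicitcenter} and Corollary \ref{cor:dihedralcenter}. The only point to double-check is that the deformation graph of Lemma \ref{lem:deformations} is connected in the required direction, i.e.\ that $k[s,t]/(s^2,t^2)$ sits at the top and $k^{\oplus 4}$ at the bottom with every intermediate algebra lying on a common path between them; an inspection of (\ref{eq:deformat}) confirms this.
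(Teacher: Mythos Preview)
Your proposal is correct and follows essentially the same argument as the paper: reduce to the algebraically closed case via flat base change (Lemma~\ref{lem:flatbasechange}), then use the deformation diagram (\ref{eq:deformat}) together with the inequality of Lemma~\ref{lem:centerinequality} to sandwich every intermediate algebra between the two extremal cases computed in Lemma~\ref{lem:explicitcenter} and Corollary~\ref{cor:dihedralcenter}. The only cosmetic difference is the order of the two reductions.
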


\begin{proof}
If $k$ is algebraically closed, this follows from the fact that all Frobenius algebras fit inside a directed diagram of deformations by Lemmas \ref{lem:classificationfrobenius} and \ref{lem:deformations}, together with the inequality proven above in \ref{lem:centerinequality} and the fact that the result holds for the extremal cases in the diagram  satisfy the result by \ref{lem:explicitcenter} and Corollary \ref{cor:dihedralcenter}.\\
For the general case we use  the flat base change lemma \ref{lem:flatbasechange}.
\end{proof}

\begin{lemma}\label{cor:residuebasechange}
Theorems \ref{thm:centerprojectiverank2} and \ref{lem:centersplit} hold in the case where $(R,\mathfrak{m})$ is a local domain.
\end{lemma}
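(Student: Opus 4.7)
My plan is to exploit the left exact sequence \eqref{eq:center4}, which writes $Z_d(R,S) = \ker(\phi_{R,S})$ where $\phi_{R,S}: \Pi_R(S)_d \mor M$ is a map between finitely generated free $R$-modules (free by Theorem \ref{thm:free}), with $M := \Pi_R(S)_d^{\ds 5} \ds \Pi_R(S)_{d+1}^{\ds 8}$. Let $K$ be the fraction field of $R$, $k$ its residue field, and $r_d$ the rank of $Z_d(R,S)$ predicted by Theorem \ref{thm:centerprojectiverank2}.

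First I would compute the rank of $\phi_{R,S}$ at the two extremal points of $\Spec R$. Flat base change (Lemma \ref{lem:flatbasechange}) combined with Lemma \ref{cor:fielddim} gives $\dim_K \ker(\phi_{R,S} \tr K) = r_d$; base change along $R \mor k$ via Lemma \ref{lem:basechange} identifies $\phi_{R,S} \tr k$ with $\phi_{k, k \tr S}$, whose kernel also has $k$-dimension $r_d$ by Lemma \ref{cor:fielddim}. Since $\Pi_R(S)_d$ is free of the same rank after either base change (Theorem \ref{thm:free}), the matrix of $\phi_{R,S}$ must have equal rank over $K$ and over $k$.

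The key step is then to deduce that $\coker \phi_{R,S}$ is a free $R$-module. By right exactness of tensor product the previous step yields $\dim_K(K \tr \coker \phi_{R,S}) = \dim_k(k \tr \coker \phi_{R,S})$; since $R$ is a local Noetherian domain, equality of generic rank and minimal number of generators (the latter via Nakayama) forces any finitely generated module to be free, so $\coker \phi_{R,S}$ is free. From the short exact sequence $0 \mor \im \phi_{R,S} \mor M \mor \coker \phi_{R,S} \mor 0$ with both $M$ and the quotient free, $\im \phi_{R,S}$ is flat, hence free by the Noetherian local hypothesis. Applying the same reasoning to $0 \mor Z_d(R,S) \mor \Pi_R(S)_d \mor \im \phi_{R,S} \mor 0$ then shows that $Z_d(R,S)$ is free, with the splitting following from projectivity of $\im \phi_{R,S}$. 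This simultaneously proves Theorems \ref{lem:centersplit} and \ref{thm:centerprojectiverank2} in the local domain case, the rank $r_d$ being read off from the $K$-dimension.

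The main obstacle is the freeness of $\coker \phi_{R,S}$; once this is established, the rest is standard homological bookkeeping, with Theorems \ref{thm:free} and \ref{cor:fielddim} supplying the essential ranks and dimensions.
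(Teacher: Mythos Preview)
Your argument is correct and rests on the same idea as the paper's proof: the kernel of $\phi_{R,S}\otimes K$ and of $\phi_{R,S}\otimes k$ both have dimension $r_d$ by Lemma~\ref{cor:fielddim}, so the matrix of $\phi_{R,S}$ has the same rank at the generic and closed points of $\Spec R$, and over a local Noetherian domain this forces freeness. The only difference is in packaging: the paper carries this out by an explicit block decomposition of the matrix $\Phi$ (lifting an invertible $r\times r$ minor from $\Phi_k$ to $\Phi$ and showing the complementary block $\Psi'$ must vanish), which directly exhibits $V=V_1\oplus V_2$ with $Z_d(R,S)=V_2$, whereas you argue homologically by first showing $\coker\phi_{R,S}$ is free and then splitting the short exact sequences through $\im\phi_{R,S}$. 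Your route is slightly more conceptual and avoids coordinates; the paper's is more explicit and incidentally shows that in suitable bases $\phi_{R,S}$ is literally a coordinate projection. Both invoke the same finiteness principle (your ``generic rank equals minimal number of generators'' is exactly the content of \cite[1.4.4]{GrothendieckSGA1} used elsewhere in the paper).
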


\begin{proof}
Let $\phi_{R,S}$ be as the morphism defining the left exact sequence (\ref{eq:center}). Then $\phi_{R,S}$ is a morphism between free $R$-modules of finite rank and hence can be represented by a matrix $\Phi$ with respect to some chosen basis for $V := \Pi_R(S)_d$ and $W :=  (\Pi_R(S)_d^{\oplus 5}) \bigoplus (\Pi_R(S)_{d+1}^{\oplus 8})$. Let $\Phi_k$ be the matrix with coefficients viewed in the residue field $k=R/\mathfrak{m}$. Then $\Phi_k$ is a matrix representing the morphism $k \otimes \phi$ after the choice of induced $k$-basis for $k \otimes_R V$ and $k \otimes_R W$.\\
Let $r = \rk(\Phi_k)$. Then there is an invertible $r \times r$ submatrix $\Psi_k$ in $\Phi_k$. The corresponding submatrix $\Psi$ of $\Phi$ has a determinant which does not lie in $\f{m}$ and is thus itself invertible. By a suitable change of basis in $V$ and $W$ we can now rewrite $\Phi$ in the following form:
\[ \Phi =
  \begin{blockarray}{crrc}
    \begin{block}{[cr|rc]}
      & \Id_{r \times r} & 0 & \\ \BAhhline{~------~}
      & 0 & \Psi' & \\
    \end{block}
  \end{blockarray}
\]
where all entries of the the submatrix $\Psi'$ lie in $\f{m}$ (any entry not in $\f{m}$ would give rise to an invertible submatrix of rank $r+1$ by elementary row and column operations). This implies that we can decompose $V$ and $W$ as a direct sum of free submodules $V = V_1 \oplus V_2$ and $W= W_1 \oplus W_2$ such that $\phi := \phi_1 \oplus \phi_2$ where $\phi_1: V_1 \stackrel{\cong}{\mor} W_1$ and $\phi_2:V_2 \mor W_2$ satisfies $k \otimes_R \phi_2' =0$. In particular,

\begin{equation}\label{eq:centerresiduefield}
Z_d(k,k \tr_R S) = \ker( k \otimes \phi) = k \otimes V_2
\end{equation}
 
and hence $V_2$ is free of  rank given by (\ref{eq:preprojcenterdim}).\\
Now, we let $K$ denote the fraction field of $R$. Then since by construction $\ker(\phi) \subset V_2$, we obtain  $K\tr_R \ker(\phi) \subset K\tr V_2$. Hence since $K$ is flat over $R$, Lemma \ref{lem:basechange} gives: 
\[ \dim_K (K\tr_R \ker(\phi))= \dim_K(K\tr Z_d(R,S))= \dim_K(Z_d(K,K \tr_R S)) \]
Which by Lemma \ref{cor:fielddim} and the above equality (\ref{eq:centerresiduefield}) is equal to  $\dim_K(K \otimes V_2)$. It follows that $\ker(\phi)=V_2$ from which we infer that $\phi_2=0$ and hence the monomorphism $Z_d(R,S) \mono \Pi_R(S)_d$ splits. It follows that $Z_d(R,S)$ is projective and since the ranks can be computed after tensoring with a field, they must be given by (\ref{eq:preprojcenterdim}).
\end{proof}

We can now finish the proofs of the main results of this section. This is done in a way similar to the proof of Theorem \ref{thm:free}:

\begin{proof}[Proof of Theorem \ref{lem:centersplit}] 
Let $d \in \mathbb{N}$ and let $\iota_{R,S}$ denote the embedding 
\[\iota_{R,S}: Z_d(R,S) \mono \Pi_R(S)_d\]
By Lemma \ref{cor:residuebasechange} we already know that the result holds if $R$ is a local domain and by the local nature of splitting (see for example \cite[Exercise 4.13, p.105]{lam}) it extends to the case where $R$ is any domain.\\[\medskipamount]
Next let $R$ be a local ring with algebraically closed residue field. Then by Lemma \ref{lem:reducetodomain} $S/R$ can be obtained as a base change of $\tilde{S}/\tilde{R}$ through a morphism $\tilde{R} \mor R$ where $\tilde{R}$ is a domain. The result follows in this case as a split embedding remains split after base change.\\[\medskipamount] 
Next, we assume $R$ is any local ring. In this case we can consider the faithfully flat morphism $R \mor \overline{R}$ provided by Lemma \ref{lem:localclosure}. As the residue field of $\overline{R}$ is algebraically closed the monomorphism $\iota_{\overline{R}, S \otimes \overline{R}} = \iota_{R,S} \otimes \overline{R}$ is split by the above case. This implies that $\iota_{R,S}$ must be split itself by Lemma \ref{lem:ffsplit} below. \\[\medskipamount]
Finally, the result follows for any ring by using the local nature of splitting \cite[Exercise 4.13, p.105]{lam}) once again.
\end{proof}

\begin{proof}[Proof of Theorem \ref{lem:basechangecenterrank4}]
This is an immediate consequence of Theorem \ref{lem:centersplit} and the fact that the construction of $\phi_{R,S}$ in (\ref{eq:center4}) is compatible with base change.
\end{proof}

\begin{proof}[Proof of Theorem \ref{thm:centerprojectiverank2}]

First let $(R,\f{m})$ be a local domain with residue field $k$ and field of fractions $K$. Then by Lemma \ref{cor:fielddim}, 
\begin{align*}
\dim_K(K\tr_R(Z_d(R,S))=&\dim_K(Z_d(K,K\tr_R S))\\
=&\dim_k(Z_d(k,k\tr_R S)) \\
=&\dim_k(k\tr_R Z_d(R,S))
\end{align*}

Hence by \cite[Chapitre 1, Corollaire 4.4]{GrothendieckSGA1}, $Z_d(R,S)$ is free of the required rank.\\[\medskipamount]
If $R$ is a domain, then for any $\f{p} \in \Spec(R)$, $R_{\f{p}}$ is a local domain such that $R_{\f{p}}\tr_R Z_d(R,S)=Z_d(R_{\f{p}}, R_{\f{p}}\tr_R S)$ is a free module of the desired rank. Serre's theorem then proves that $Z_d(R,S)$ is projective of the desired rank.\\[\medskipamount]
Next let $R$ be a local ring with algebraically closed residue field and let $\tilde{S}/\tilde{R}$ the relative Frobenius ring provided by Lemma \ref{lem:reducetodomain}. Then we know that $Z_d( \tilde{R}, \tilde{S})$ is projective over $\tilde{R}$ of the desired rank. Hence $Z_d(R,S) = Z_d( \tilde{R},\tilde{S}) \otimes R$ is free of the required rank over $R$.\\[\medskipamount]
To extend the statement to general local rings we just apply Lemma \ref{lem:localclosure}.\\[\medskipamount] 
Finally Serre's theorem extends the statement to general rings.
\end{proof}

\ref{lem:centersplit}.
\begin{lemma} \label{lem:ffsplit}
Let $(R,\f{m})$ be a local ring and let $R \mor \overline{R}$ be as in Lemma \ref{lem:localclosure}. Let $\iota: A \mono B$ be an embedding of finitely generated $R$-modules where $B$ is a projective $R$-module. If $\iota \otimes \overline{R}: A \otimes \overline{R} \mono B \otimes \overline{R}$ is a split embedding, then $\iota$ is itself a split embedding
\end{lemma}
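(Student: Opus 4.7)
The plan is to reformulate the splitting of $\iota$ as a solvability problem for a single linear map and then descend the solution from $\overline R$ to $R$ via faithful flatness. Concretely, $\iota\colon A\hookrightarrow B$ admits a retraction if and only if the restriction-along-$\iota$ map
\[
\phi\colon \Hom_R(B,A)\longrightarrow \Hom_R(A,A),\qquad r\longmapsto r\circ\iota,
\]
hits $\Id_A$. By hypothesis the analogous map $\phi_{\overline R}$ obtained after applying $-\otimes_R\overline R$ hits $\Id_{A\otimes\overline R}$, so the task is to lift this fact from $\overline R$ down to $R$.

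The first step is to identify $\phi\otimes_R\overline R$ with $\phi_{\overline R}$. Since $B$ is a finitely generated projective module over the Noetherian local ring $R$ (in fact free), it is finitely presented, so together with flatness of $\overline R$ the canonical map
\[
\Hom_R(B,A)\otimes_R\overline R \longrightarrow \Hom_{\overline R}(B\otimes\overline R,\,A\otimes\overline R)
\]
is an isomorphism; likewise for $\Hom_R(A,A)$. Under these identifications $\phi\otimes_R\overline R$ corresponds to $\phi_{\overline R}$, so $\Id_A\otimes 1$ lies in $\im(\phi\otimes\overline R)$. Because tensoring with the flat module $\overline R$ commutes with taking images inside $\Hom_R(A,A)\otimes\overline R$, this gives $\Id_A\otimes 1 \in \im(\phi)\otimes_R\overline R$.

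Finally set $Q=\Hom_R(A,A)/\im(\phi)$ and let $q\in Q$ denote the class of $\Id_A$. The previous step says $q\otimes 1=0$ in $Q\otimes_R\overline R$. Faithful flatness of $\overline R$ over $R$ makes the canonical map $N\to N\otimes_R\overline R$ injective for every $R$-module $N$: if $n\neq 0$ then $Rn\cong R/\operatorname{Ann}(n)$ is nonzero, hence $Rn\otimes\overline R \cong \overline R/\operatorname{Ann}(n)\overline R$ is nonzero by faithful flatness applied to the proper ideal $\operatorname{Ann}(n)$, and $n\otimes 1$ is sent to the nonzero class of $\overline 1$. Applied to $q$, this forces $q=0$, i.e.\ $\Id_A\in\im(\phi)$, producing the desired retraction. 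The only genuine subtlety along the way is the Hom-tensor commutativity for $\Hom_R(B,-)$, which relies on finite presentation of $B$; the rest is routine faithfully flat descent.
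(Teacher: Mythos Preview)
Your proof is correct and takes a genuinely different route from the paper's. The paper argues via the residue field: since $\iota\otimes\overline R$ splits, $\iota\otimes\overline k$ is injective, and by faithful flatness of $k\to\overline k$ so is $\iota\otimes k$; then the long exact Tor sequence for $C=\coker(\iota)$ together with $\Tor_1^R(B,k)=0$ forces $\Tor_1^R(C,k)=0$, whence $C$ is free over the Noetherian local ring $R$ and the sequence splits. Your argument instead performs a direct faithfully flat descent of the retraction: you identify $\phi\otimes\overline R$ with $\phi_{\overline R}$ via Hom--tensor commutativity and then use that $N\to N\otimes_R\overline R$ is injective for faithfully flat $\overline R$ to pull $\Id_A$ back into $\im(\phi)$. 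Your approach is more elementary and more portable---it never uses the local structure or the residue-field square, only faithful flatness and Noetherianity---whereas the paper's argument extracts slightly more, namely that the cokernel is free. One small remark on your closing sentence: the Hom--tensor isomorphism for $\Hom_R(A,A)$ also needs $A$ finitely presented, not just $B$; you have this since $R$ is Noetherian and $A$ is finitely generated, and indeed you implicitly invoke it with ``likewise'', but the final summary slightly understates what is used.
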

\begin{proof}
Let $k$ be the residue field of $R$ and $\overline{k}$ its algebraic closure, then there is a commutative diagram
\[ \begin{tikzpicture}
\matrix(m)[matrix of math nodes,
row sep=3em, column sep=3em,
text height=1.5ex, text depth=0.25ex]
{ R & k \\
\overline{R} & \overline{k} \\};
\path[->,font=\scriptsize]
(m-1-1) edge (m-1-2)
        edge (m-2-1)
(m-2-1) edge (m-2-2)
(m-1-2) edge (m-2-2)
;
\end{tikzpicture} \]
As $\iota \otimes \overline{R}$ is split, $\iota \otimes \overline{k}$ is a monomorphism. The above commutative diagram (and the faithfully flatness of $k \mor \overline{k}$) implies $\iota \otimes k$ is a monomorphism. Let $C = \coker(\iota)$, then we have a long exact sequence
\[ \ldots \mor \Tor_1^R(B,k) \mor \Tor_1^R(C,k) \mor A \otimes k \stackrel{\iota \otimes k}{\mor} B \otimes k \mor C \otimes k \mor 0 \]
As $B$ is a projective $R$-module it is also flat, implying $\Tor_1^R(B,k)=0$. From this it follows that $\Tor_1^R(C,k)=0$ and hence $C$ is of projective dimension 0, and itself projective. It follows that the exact sequence $0 \mor A \stackrel{\iota}{\mor} B \mor C \mor 0$ is split.
\end{proof}

\section{$\Pi_R(S)$ is noetherian and finite over its center.}
Throughout this section, we keep the standing assumptions of \S 4, namely $S$ relative Frobenius of rank 4 over the noetherian ring $R$.\\[\medskipamount] 

The main result of the section is the following:
\begin{theorem}\label{thm:main}
$\Pi_R(S)$ is noetherian and finite over its center.
\end{theorem}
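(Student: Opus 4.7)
The plan is to obtain Theorem~\ref{thm:main} as a short formal corollary of Theorem~\ref{thm:sigmamain}, which asserts the surjectivity of $\sigma_{R,S}: R[Z_4(R,S)]^{\oplus n} \mor \Pi_R(S)$. Here I read $R[Z_4(R,S)]$ as the commutative $R$-subalgebra of $Z(\Pi_R(S))$ generated by the degree-$4$ central part $Z_4(R,S)$, which by Theorem~\ref{thm:centerprojectiverank2} is a finitely generated projective $R$-module (of rank $2$). The hard work is therefore entirely shifted into the proof of \ref{thm:sigmamain}; what remains below is essentially bookkeeping, so the main obstacle is really upstream of the current statement.

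To handle the central-finiteness assertion, I would note that $Z_4(R,S) \subset Z(\Pi_R(S))$ forces $R[Z_4(R,S)]$ to sit inside the center. Surjectivity of $\sigma_{R,S}$ then exhibits $\Pi_R(S)$ as a module generated by $n$ elements over $R[Z_4(R,S)]$, and a fortiori as a finitely generated module over $Z(\Pi_R(S))$ itself. This is exactly the statement that $\Pi_R(S)$ is finite over its center.

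For the noetherian half, I would observe that $R[Z_4(R,S)]$ is a finitely generated commutative $R$-algebra (since $Z_4(R,S)$ is a finitely generated $R$-module), so the Hilbert basis theorem, applied to the noetherian base ring $R$, shows $R[Z_4(R,S)]$ is itself a noetherian commutative ring. Then $\Pi_R(S)$, being finitely generated as a module over the noetherian ring $R[Z_4(R,S)]$, is a noetherian $R[Z_4(R,S)]$-module. Since every left (respectively right) ideal of $\Pi_R(S)$ is in particular an $R[Z_4(R,S)]$-submodule, an ascending chain of one-sided ideals stabilizes, forcing $\Pi_R(S)$ to be both left- and right-noetherian as a ring. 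This completes the deduction.
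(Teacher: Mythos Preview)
Your proposal is correct and matches the paper's approach exactly: the paper derives Theorem~\ref{thm:main} as an immediate consequence of Theorem~\ref{thm:sigmamain}, noting only that ``$Z_4(R,S)$ is clearly finitely generated over $R$'' so that $R[Z_4(R,S)]$ is noetherian and $\Pi_R(S)$ is a finite module over it. Your write-up simply makes explicit the standard deductions (Hilbert basis theorem, finite modules over noetherian rings) that the paper leaves implicit.
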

As part of the proof, we construct a morphism 
\[
\sigma_{R,S}: R[Z_4(R,S)]^{\oplus m} \mor \Pi_R(S)
\] as follows: first choose an $R$-basis $(x,y,z,w)$ for $S$ and let $e$ be the element corresponding to $1_S \in N$ and $f$ be the element corresponding to $1_S \in M$ (where we used the notation from the discussion preceding Definition \ref{def}). This choice yields an obvious morphism
\[ \pi : R < x,y,z,w,e,f> \mor T_{R \oplus S}(M \oplus N)\]
Where $x,y,z,w$ have degree 0 and $e,f$ have degree 1 in $R< x,y,z,w,e,f>$. \\The $R$-module $T(R,S)_0=R\ds S$ is generated by $(1_R, x,y,z,w)$ and these 5 elements are the images under $\pi$ of the corresponding elements in $R<x,y,z,w,e,f>$ showing that $\pi$ is surjective in degree $0$. Moreover, $T(R,S)_1=M\ds N={}_R S_S\ds {}_S S_R$ is generated by $(xe,ye,ze,we,fx,fy,fz,fw)$ as an $R$-$R$-bimodule showing that $\pi$ is also surjective in degree $1$.\\ Finally since $T(R,S)$ is generated in degree 0 and 1, it follows that $\pi$ itself is surjective. Composing with the canonical quotient map $T(R,S) \epi \Pi_R(S)$ yields a surjection
\[ \chi : R< x,y,z,w,e,f> \epi \Pi_R(S) \]

Using the map $\chi$ we will construct a finite set of generators for $\Pi_R(S)$ as a module over its center. For this notice that the $R$-module $\Pi_R(S)_{\le 6}$ is generated by the image of the words of length at most $6$ in $\{e,f\}$. This set of words is infinite, but we can reduce it to a finite set of generators for $\Pi_R(S)_{\le 6}$ using the following observations
\begin{itemize}
\item since $\{1_R,x,y,z,w\}$ forms an $R$-basis for $\Pi_R(S)_0$, we can assume that any subword of degree zero is precisely a letter in this set
\item by the definition of the multiplication of $\Pi_R(S)$, we have $e^2=f^2=0$
\end{itemize}
Hence if we let $H$ be the finite set set of words in $\{x,y,z,w,e,f\}$ of length at most 6 in $\{e,f\}$ such any two instances of $x,y,z,w$ are separated by at least one $e$ or $f$, we obtain $\chi(R \cdot H)=\Pi_R(S)_{\le 6}$. Picking an order for this set
 \[ H=\{a_1, \ldots , a_m\} \]
we can define $\sigma_{R,S}$ as
\[ \sigma_{R,S} : R[Z_4(R,S)]^{\oplus m} \rightarrow \Pi_R(S): (z_i)_{i=1}^n \mapsto \sum_{i=1}^n z_i \chi(a_i) \]
We shall prove the following theorem
\begin{theorem}\label{thm:sigmamain}
$\sigma_{R,S}$ is surjective.
\end{theorem}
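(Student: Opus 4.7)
My plan follows the standard reduction template of the paper: verify that $\sigma_{R,S}$ is compatible with base change, use graded Nakayama to reduce to residue fields, pass to an algebraic closure, and then propagate surjectivity along the deformation graph of Lemma \ref{lem:deformations}.

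First I would observe that the construction of $\sigma_{R,S}$ is stable under any morphism $R \to R'$: the basis $(x,y,z,w)$ of $S$ lifts to one of $R' \otimes_R S$, the combinatorial set $H$ of admissible words depends only on this data, and $R' \otimes_R Z_4(R,S) \cong Z_4(R', R' \otimes_R S)$ by Theorem \ref{lem:basechangecenterrank4}, so $\sigma_{R',R' \otimes S} \cong \sigma_{R,S} \otimes_R R'$. In each degree $d$ both the source and target of $\sigma_{R,S}$ are finitely generated over $R$ (by Theorems \ref{thm:free} and \ref{thm:centerprojectiverank2}), so $\coker(\sigma_{R,S})_d$ is finitely generated. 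Nakayama then reduces surjectivity to checking it after tensoring with every residue field of $R$, and faithful flatness reduces further to the case of an algebraically closed field $k$.

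Next, for an algebraically closed field $k$ and a Frobenius deformation $\frobdef{F}{G}$ realized by $D/k[[u]]$, the base change identifications allow one to read $\sigma_{k,F}$ as $\sigma_{k[[u]], D} \otimes_{k[[u]]} k$ and $\sigma_{k((u)), G \otimes_k k((u))}$ as $\sigma_{k[[u]], D} \otimes_{k[[u]]} k((u))$. A degreewise Nakayama argument over the local ring $k[[u]]$ then upgrades surjectivity of $\sigma_{k,F}$ to surjectivity of $\sigma_{k[[u]],D}$; tensoring with $k((u))$ gives surjectivity of $\sigma_{k((u)), G \otimes_k k((u))}$; and faithful flatness of $k \to k((u))$ yields surjectivity of $\sigma_{k,G}$. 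Running this propagation through diagram (\ref{eq:deformat}) reduces the problem to the two extremal cases $S = k^{\oplus 4}$ and $S = k[s,t]/(s^2, t^2)$.

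Finally I would handle the two extremal cases directly. For $S = k^{\oplus 4}$, the Morita equivalence of Lemma \ref{lem:dihedralskew} identifies $\Pi_k(k^{\oplus 4})$ with $k[x,y] \# BD_2$, which is module-finite over its center $k[x,y]^{BD_2}$; by the presentation $k[A,B,C]/(C^2 - B(A^2 - 4B^2))$ of Lemma \ref{lem:dihedralcenter} this center is itself finite over $k[A,B] \subseteq R[Z_4]$, so $\Pi_k(k^{\oplus 4})$ is finite over $R[Z_4]$, and a Hilbert series comparison with Lemma \ref{lem:etingoff} then confirms that the words $\{\chi(a_i)\}_{i=1}^m$ actually exhaust a generating set. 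For $S = k[s,t]/(s^2, t^2)$ the explicit presentation from appendix A.1 combined with the description of the center in Lemma \ref{lem:explicitcenter} supports an analogous direct verification. The main obstacle is precisely this last step: establishing abstract finiteness of $\Pi_k(S)$ over $R[Z_4]$ is reasonably clean, but verifying that the specific combinatorial set $H$ of admissible words of length at most six already suffices as a generating set requires careful bookkeeping using the explicit bases from the appendix and the Morita structure.
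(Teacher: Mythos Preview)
Your reduction strategy is exactly the paper's: base change for $\sigma_{R,S}$ (Lemma~\ref{lem:basechangesigma}), Nakayama to residue fields, and propagation of surjectivity along Frobenius deformations (Lemma~\ref{lem:defosigma}). The one place you overshoot is in claiming that the deformation argument reduces to \emph{two} extremal cases. Since surjectivity propagates from $F$ to $G$ whenever $\frobdef{F}{G}$, and every algebra in diagram~(\ref{eq:deformat}) is reached from the single leftmost node $k[s,t]/(s^2,t^2)$, the paper only verifies that one case directly (Lemma~\ref{lem:surjectivespecific}, appendix~A.3). The case $S=k^{\oplus 4}$ then follows for free.

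This matters because your proposed treatment of $S=k^{\oplus 4}$ has genuine gaps. The Morita equivalence of Lemma~\ref{lem:dihedralskew} is only available when $\ch(k)\neq 2$, so your argument would not cover characteristic~$2$. Moreover, knowing abstractly that $\Pi_k(k^{\oplus 4})$ is finite over $k[A,B]\subset R[Z_4]$ does not by itself tell you that the specific set $\{\chi(a_i)\}$ coming from $H$ generates; your ``Hilbert series comparison'' step is not a proof of this, and making it one would require exactly the sort of explicit bookkeeping you flag as the obstacle. By contrast, the appendix~A.3 computation for $k[s,t]/(s^2,t^2)$ shows directly that the normalizing elements $u,v$ in degree~$2$ (whose squares lie in $Z_4$) already generate $\Pi_k(S)$ over $\Pi_k(S)_{\le 6}$, which is precisely what is needed for the concrete $\sigma$. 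So drop the $k^{\oplus 4}$ case entirely and you have the paper's proof.
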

From this Theorem \ref{thm:main} will readily follow as $Z_4(R,S)$ is clearly finitely generated over $R$. We once again prove Theorem \ref{thm:sigmamain} by increasing the generality of the ring $R$.

First, show that the construction of $\sigma_{R,S}$ commutes with base change: Let $R \mor R'$ be any morphism of rings, then since  both the construction of generalized preprojective algebras and taking their center commute with base change by Theorems \ref{lem:basechangecenterrank4} and \ref{lem:basechange}, we have a diagram

\begin{eqnarray} \label{eq:diagram}
\xymatrix{
R'[Z_4(R',R'\tr_RS)]^{\ds n}\ar[rr]^{\sigma_{R',R'\tr_RS}} & &\Pi_R'(R' \tr_R S)\\
R'\tr_R R[Z_4(R,S)]^{\ds n}\ar[rr]_{R'\tr_R(\sigma_{R,S})} \ar[u]_\simeq^{\zeta}& &R'\tr_R\Pi_R(S) \ar[u]_\simeq^{\eta}
}
\end{eqnarray}
where the vertical maps are isomorphisms
\begin{lemma}\label{lem:basechangesigma}
For any morphism $\varphi: R\mor R'$, the diagram in (\ref{eq:diagram}) is commutative.
\end{lemma}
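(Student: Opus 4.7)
The plan is to reduce the commutativity of the diagram to a straightforward bookkeeping check by unravelling both vertical maps and both horizontal maps on a suitable spanning set. First I would make the two isomorphisms explicit. The map $\eta: R' \otimes_R \Pi_R(S) \xrightarrow{\cong} \Pi_{R'}(R' \otimes_R S)$ is the canonical isomorphism of Lemma \ref{lem:basechange}, which on degree one sends $1 \otimes xe \mapsto (1 \otimes x)e'$, $1 \otimes fy \mapsto f'(1 \otimes y)$, etc. The map $\zeta: R' \otimes_R R[Z_4(R,S)]^{\oplus m} \xrightarrow{\cong} R'[Z_4(R', R' \otimes_R S)]^{\oplus m}$ is induced componentwise by the base-change isomorphism of Theorem \ref{lem:basechangecenterrank4}, which factors through the injection $\eta$ (so that the element $1 \otimes z_i$ on the left is sent to the same element viewed inside $\Pi_{R'}(R' \otimes_R S)_4$ on the right).

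Next, I would observe that once we fix the $R$-basis $(x,y,z,w)$ of $S$ and the generator $\lambda$ of $\operatorname{Hom}_R(S,R)$, the base-change data $(1 \otimes x, 1 \otimes y, 1 \otimes z, 1 \otimes w)$ and $1 \otimes \lambda$ is precisely what is used to build $\chi'$ and $\sigma_{R', R' \otimes_R S}$. With this choice, the surjection $\chi' : R'\langle x',y',z',w',e',f' \rangle \twoheadrightarrow \Pi_{R'}(R' \otimes_R S)$ satisfies $\eta(1 \otimes \chi(a_i)) = \chi'(a_i')$ for every word $a_i \in H$, where $a_i'$ is the same word in primed variables. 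This is immediate on the generators $x,y,z,w,e,f$ by the explicit formula for $\eta$, and it extends to all of $H$ because $\eta$ is an algebra morphism.

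Having established this, the commutativity is a direct computation on an element of the form $r' \otimes (z_1, \ldots, z_m)$ with $z_i \in Z_4(R,S)$. Going right-then-up yields
\[ \eta \circ (R' \otimes_R \sigma_{R,S})(r' \otimes (z_i)_i) = \eta\Bigl( r' \otimes \sum_i z_i \chi(a_i) \Bigr) = \sum_i r' \cdot \eta(1 \otimes z_i) \cdot \eta(1 \otimes \chi(a_i)), \]
while going up-then-right yields
\[ \sigma_{R', R' \otimes_R S} \circ \zeta (r' \otimes (z_i)_i) = \sum_i r' \cdot \zeta(1 \otimes z_i) \cdot \chi'(a_i'). \]
The two agree by the compatibility $\eta(1 \otimes \chi(a_i)) = \chi'(a_i')$ together with the fact that $\zeta$ and $\eta$ intertwine the inclusions $Z_4 \hookrightarrow \Pi(\cdot)_4$. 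Since such simple tensors generate the bottom-left corner as an abelian group, this establishes commutativity.

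The only potential obstacle is notational: one must be careful that the same choice of $R'$-basis, generator $\lambda'$, and ordering of the word set $H'$ is used on the top row as is induced from the bottom row by base change, so that $\sigma_{R', R' \otimes_R S}$ is literally defined using the primed analogues $(a_i')_i$ of $(a_i)_i$. Once this identification is in place the verification is essentially tautological, as the constructions of $T(R,S)$, of $\chi$, and of $Z_4$ are all functorial with respect to $R \to R'$.
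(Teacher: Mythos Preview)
Your proposal is correct and follows essentially the same approach as the paper: both fix the Frobenius data on $R$, observe that the induced data on $R'$ is what defines $\chi'$ and $H'$ so that $\eta(1\otimes\chi(a_i))=\chi'(a_i')$, and then verify commutativity by a direct element chase on simple tensors. The only slip is that you should take $z_i\in R[Z_4(R,S)]$ rather than $z_i\in Z_4(R,S)$ (otherwise the claimed generating set does not span), but the computation goes through verbatim with this correction.
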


\begin{proof}
Let $S/R$ be relative Frobenius. Let basis $e_1, \ldots , e_n$ be an $R$-basis for $S$ and $\lambda$ a generator for the $S$-module $\Hom_R(S,R)$. Then as in the proof of Lemma \ref{lem:basechange}, $(R' \otimes S)/R'$ is relative Frobenius with basis $1_{R'} \otimes e_1, \ldots , 1_{R'} \otimes e_n$ and generator $1_{R'} \otimes \lambda$. Following the successive steps in the construction of $\sigma_{R',R' \otimes_R S}$ outlined in the discussion preceding \ref{thm:sigmamain} we observe that
\[ \begin{cases}
\Pi_{R'}(R' \otimes_R S) &= 1_{R'} \otimes \Pi_R(S) \\
\chi_{R'} &= 1_{R'} \otimes \chi_R \\
H_{R'} &= 1_{R'} \otimes H_R
\end{cases}\]
Let $z_i$ be an element in $R[Z_4(R,S)]$ considered as the $i$th component of $R[Z_4(R,S)]^{\oplus m}$, then
\begin{eqnarray*}
\eta \circ \left(1_{R'}\tr_R(\sigma_{R,S})\right) (r' \otimes z_i ) & = & \eta \left( r' \otimes z_i \chi_R(a_i)\right) \\
 & = & r' (1 \otimes z_i \chi_R(a_i)) \\
 & = & r' (1 \otimes z_i) (1 \otimes \chi_R(a_i)) \\ 
 & = & r' (1 \otimes z_i) (\chi_{R'}(1 \otimes a_i)) \\
 & = & \sigma_{R',R' \otimes S} (r' (1 \otimes z_i)) \\
 & = & \sigma_{R',R' \otimes S} \circ \zeta (r' \otimes z_i) \qedhere
\end{eqnarray*}
\end{proof}

\begin{lemma}\label{lem:defosigma}
Let $F$ and $G$ be Frobenius algebras over $k$ such that $\frobdef{F}{G}$.\\
If $\sigma_{k,F}$ is surjective, then so is $\sigma_{k,G}$
\end{lemma}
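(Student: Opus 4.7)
The plan is to lift the surjectivity from $F$ to the deformation $D$ using Nakayama's lemma, then specialize down to $G$ using faithful flatness of a field extension. Let $D$ be the $R$-algebra witnessing $\frobdef{F}{G}$, where $R := k[[u]]$, and let $K := k((u))$, so $D/R$ is relative Frobenius of rank $4$ with $D/uD \cong F$ and $D\otimes_R K \cong G\otimes_k K$. In particular the morphism $\sigma_{R,D}\colon R[Z_4(R,D)]^{\oplus m}\mor \Pi_R(D)$ is defined, and by Lemma \ref{lem:basechangesigma} we have canonical identifications
\[
\sigma_{R,D}\otimes_R k \;=\; \sigma_{k,F}, \qquad \sigma_{R,D}\otimes_R K \;=\; \sigma_{K,\,G\otimes_k K} \;=\; \sigma_{k,G}\otimes_k K.
\]
Note that the cardinality $m$ of the set $H$ depends only on the rank of $S$, hence is the same for all three maps.

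The key step is a degreewise Nakayama argument. By Theorem \ref{thm:free}, in each degree $d$ the $R$-module $\Pi_R(D)_d$ is free of finite rank; hence $\coker(\sigma_{R,D})_d$ is a finitely generated $R$-module. Right exactness of $-\otimes_R k$ gives
\[
\coker(\sigma_{R,D})_d \otimes_R k \;=\; \coker(\sigma_{R,D}\otimes_R k)_d \;=\; \coker(\sigma_{k,F})_d \;=\; 0
\]
by hypothesis. Since $R = k[[u]]$ is local with residue field $k$, Nakayama's lemma forces $\coker(\sigma_{R,D})_d = 0$, so $\sigma_{R,D}$ is surjective in every degree.

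Now tensor with $K$: the morphism $\sigma_{R,D}\otimes_R K$ remains surjective, so $\sigma_{k,G}\otimes_k K$ is surjective. Since $K/k$ is a (faithfully) flat extension of fields, this implies $\sigma_{k,G}$ itself is surjective, completing the proof. The only non-formal input is the finiteness required for Nakayama, which is guaranteed by Theorem \ref{thm:free}; everything else is bookkeeping via the base change isomorphism of Lemma \ref{lem:basechangesigma}, so no serious obstacle arises.
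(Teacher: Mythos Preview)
Your proof is correct and follows essentially the same approach as the paper: lift surjectivity from $\sigma_{k,F}$ to $\sigma_{R,D}$ via Nakayama over $R=k[[u]]$, then base-change to $K=k((u))$ and descend to $\sigma_{k,G}$ by faithful flatness. You spell out explicitly the finiteness hypothesis for Nakayama (via Theorem~\ref{thm:free}), which the paper leaves implicit, but the argument is otherwise identical.
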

\begin{proof}
Let $D$ be the algebra deforming $F$ to $G$ provided by Definition \ref{def:frobeniusdeformation} and write $R:=k[[u]]$, $K:=k((u))$.\\
 Then by Lemma \ref{lem:basechangesigma}, $k\tr_R \sigma_{R,D} = \sigma_{k,F}$ and Nakayama's lemma implies that $\sigma_{R,D}$ is surjective whenever $\sigma_{k,F}$ is. A second application of Lemma \ref{lem:basechangesigma} shows that $K\tr_k \sigma_{k,G} = K\tr_R \sigma_{R, D}$, showing that $K\tr_k \sigma_{k,G}$ is also surjective. Finally, $\sigma_{k,G}$  must be surjective as $K$ is faithfully flat over $k$.
\end{proof}
We will also need to establish the result in the following specific case: 
\begin{lemma}\label{lem:surjectivespecific}
Let $F:=k[s,t]/(s^2,t^2)$. Then the map $\sigma_{k,F}$ is surjective
\end{lemma}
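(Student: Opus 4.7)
The plan is to reduce the surjectivity of $\sigma_{k,F}$ to the identity $\Pi_k(F) = k[A,B] \cdot \Pi_k(F)_{\leq 6}$, and then verify this identity using the explicit description of the center from Lemma~\ref{lem:explicitcenter} together with the Type~I/Type~II basis of $\Pi_k(F)$ developed in Appendix~A.1.

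First, by Lemma~\ref{lem:explicitcenter} we have $Z(\Pi_k(F)) \cong k[A,B,C]/(C^2)$ with $\deg A = \deg B = 4$ and $\deg C = 6$, so $Z_4(k,F) = kA \oplus kB$ and the subring $k[Z_4(k,F)] \subset \Pi_k(F)$ coincides with the polynomial ring $k[A,B]$. By construction of $H$ we have $\chi(k \cdot H) = \Pi_k(F)_{\leq 6}$, and unwinding the definition the image of $\sigma_{k,F}$ is precisely $k[A,B] \cdot \chi(H) = k[A,B] \cdot \Pi_k(F)_{\leq 6}$. Thus surjectivity of $\sigma_{k,F}$ is equivalent to the equality
\[ \Pi_k(F) = k[A,B] \cdot \Pi_k(F)_{\leq 6}. \]

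Second, a Hilbert-series check already shows this is the right target. By Theorem~\ref{thm:free}, $h_{\Pi_k(F)}(t) = (5+8t+5t^2)/(1-t^2)^2$, and
\[ h_{\Pi_k(F)}(t)\cdot (1-t^4)^2 \;=\; (5+8t+5t^2)(1+t^2)^2 \;=\; 5+8t+15t^2+16t^3+15t^4+8t^5+5t^6, \]
a polynomial with non-negative coefficients supported in degrees $\leq 6$. This is the Hilbert series that $\Pi_k(F)/(A,B)\Pi_k(F)$ would have if $\Pi_k(F)$ were a free $k[A,B]$-module with generators in degrees at most $6$, and would automatically force the desired equality by graded Nakayama.

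Third, I would establish this freeness (equivalently, that $A,B$ form a regular sequence on $\Pi_k(F)$) by exploiting the explicit Type~I/Type~II basis from Appendix~A.1. Concretely, I would first identify explicit degree-$4$ representatives of $A$ and $B$ as quadratic words in $e,f,s,t$ (both appendices already operate in the required normal form), then verify directly that left multiplication by $A$ and by $B$ on each basis word in degree $d \geq 4$ is injective, and that in degrees $d \geq 7$ the basis words decompose as $A$- or $B$-multiples of basis words of degree $d-4$. Iterating this reduction on $d$ peels every element down into $\Pi_k(F)_{\leq 6}$, yielding the required equality and hence surjectivity of $\sigma_{k,F}$.

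The main obstacle is the fourth step: honest bookkeeping in the Type~I/Type~II basis to pin down $A$ and $B$ concretely and to perform the degree-reduction on words, which is the same flavour of combinatorics that Appendix~A.1 and A.2 are already set up to handle. Once that data is in hand, the Hilbert-series identity above ensures that the regular-sequence / degree-reduction argument must in fact be tight, so there is no room for the count to fail on either side.
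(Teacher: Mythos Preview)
Your reduction of the problem to the identity $\Pi_k(F) = k[A,B]\cdot \Pi_k(F)_{\le 6}$ is correct and is exactly what the paper establishes (this is the surjectivity of the map $\overline{\rho}$ in Appendix~A.3), and the Hilbert-series identity $h_{\Pi_k(F)}(t)(1-t^4)^2 = 5+8t+15t^2+16t^3+15t^4+8t^5+5t^6$ is a valid and reassuring consistency check. However, your proposed verification of ``freeness'' contains a logical slip: checking that left multiplication by $A$ and by $B$ is injective on basis words neither shows that $A$ is a non-zero-divisor on $\Pi_k(F)$ (linear combinations could still die), nor that $(A,B)$ is a regular sequence (for that you would need $B$ regular on $\Pi_k(F)/A\Pi_k(F)$, a different and harder statement). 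What actually suffices --- and what you also propose --- is the direct degree-reduction $\Pi_d \subset A\Pi_{d-4} + B\Pi_{d-4}$ for $d\ge 7$; the regular-sequence framing is an unnecessary detour.

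The paper's route is considerably lighter than running a case-by-case analysis of Type~I/II words against the degree-$4$ elements $A,B$. The trick you are missing is to drop one level and work instead with the degree-$2$ \emph{normalizing} elements $u = sef+efs+fse$ and $v = tef+eft+fte$ from Appendix~A.2, which satisfy $u^2=A$, $v^2=B$ and $uv=-vu$. With these in hand one only has to verify by brute force that the multiplication map $(ku\oplus kv)\otimes \Pi_k(F)_1 \to \Pi_k(F)_3$ is onto --- a finite check against the handful of Type~I/II generators in degree~$3$. Associativity then bootstraps this to surjections $(ku\oplus kv)^{\otimes n}\otimes \Pi_k(F)_{\le 1} \twoheadrightarrow \Pi_k(F)$ in every degree, and the anticommutation $uv=-vu$ lets one rewrite any word in $u,v$ as $\pm A^iB^j u^{\alpha}v^{\beta}$ with $\alpha,\beta\in\{0,1\}$, which immediately yields $\Pi_k(F)=k[A,B]\cdot\Pi_k(F)_{\le 6}$. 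Your plan would work in principle, but it trades a single short degree-$3$ computation for an open-ended induction over the full Type~I/II list in all degrees $\ge 7$.
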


\begin{proof}
This is proven in $A.3$.
\end{proof}

\begin{corollary} \label{cor:fieldsurjective}
Let $F$ be Frobenius over a field $k$ . Then $\sigma_{k,F}$ is surjective
\end{corollary}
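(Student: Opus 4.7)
The plan is to mirror the reduction strategy used throughout the paper and summarised in diagram (\ref{diag:reduction}): reduce first to the algebraically closed case, then exploit the deformation graph of Lemma \ref{lem:deformations} to propagate surjectivity from a single explicit base point.

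First I would pass from a general field $k$ to its algebraic closure $\overline{k}$. Applying Lemma \ref{lem:basechangesigma} to the morphism $k \mor \overline{k}$ yields the identification
\[ \sigma_{\overline{k}, \overline{k}\tr_k F} = \overline{k}\tr_k \sigma_{k,F} \]
modulo the canonical isomorphisms $\zeta,\eta$. Since $\overline{k}/k$ is faithfully flat, surjectivity of the former implies surjectivity of the latter, so it suffices to establish the result when $k$ is algebraically closed.

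Now by Lemma \ref{lem:classificationfrobenius}, $F$ is isomorphic to one of the six explicit algebras listed there. Inspecting the deformation diagram (\ref{eq:deformat}), every one of these six algebras is reachable from $F_0 := k[s,t]/(s^2,t^2)$ by a finite chain of Frobenius deformations: arrow $1$ takes $F_0$ to $k[t]/(t^4)$; composing further with arrows $2, 3$ produces $k[t]/(t^2) \oplus k[s]/(s^2)$ and $k[t]/(t^3) \oplus k$; then arrows $4$ and $5$ land in $k[t]/(t^2) \oplus k \oplus k$; and finally arrow $6$ lands in $k^{\oplus 4}$. By Lemma \ref{lem:surjectivespecific}, $\sigma_{k, F_0}$ is surjective. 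Iterating Lemma \ref{lem:defosigma} along any such chain from $F_0$ to $F$ shows that $\sigma_{k,F}$ is surjective as well.

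The main work has already been done in Lemma \ref{lem:surjectivespecific} (the explicit case) and Lemma \ref{lem:defosigma} (the deformation-propagation step); the present corollary is essentially a bookkeeping assembly of these ingredients with the base-change identity of Lemma \ref{lem:basechangesigma}. The only point requiring care is checking that $F_0$ indeed dominates the entire deformation graph, but this is immediate from the orientation of the arrows in (\ref{eq:deformat}).
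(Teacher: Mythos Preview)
Your proposal is correct and follows essentially the same route as the paper's own proof: reduce to the algebraically closed case via faithful flatness of $k \mor \overline{k}$ together with Lemma \ref{lem:basechangesigma}, then invoke Lemmas \ref{lem:classificationfrobenius} and \ref{lem:deformations} to reach every rank-$4$ Frobenius algebra from $k[s,t]/(s^2,t^2)$ by a chain of Frobenius deformations, and propagate the surjectivity established in Lemma \ref{lem:surjectivespecific} along this chain using Lemma \ref{lem:defosigma}. The only cosmetic difference is that the paper treats the algebraically closed case first and performs the base-change reduction afterwards, whereas you state the reduction first; the content is identical.
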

\begin{proof}
If $k$ is algebraically closed, then any Frobenius algebra $F$ over $k$ can be obtained from $\displaystyle k[s,t]/(s^2,t^2)$ by a finite number of Frobenius deformations following the diagram  (\ref{eq:deformat}). Hence
the result follows from a combination of Lemma \ref{lem:surjectivespecific} and Lemma \ref{lem:defosigma}.\\
For a general field we use that $\overline{k}$ is faithfully flat over $k$.
\end{proof}

\begin{proof}[Proof of Theorem \ref{thm:sigmamain}]
If $R$ is a local ring, then $k \tr_R \sigma_{R,S} \cong \sigma_{k,k\tr_R S}$ and the result follows by the Corollary \ref{cor:fieldsurjective} and Nakayama's lemma.\\
If $R$ is a non-local ring, for any $\f{p} \in \Spec(R)$, we have $R_{\f{p}} \tr_R \sigma_{R,S}=\sigma_{ R_{\f{p}},R_{\f{p}}\tr_R S}$, which is a surjective morphism. As this holds for all $\f{p}$, $\sigma_{R,S}$ is itself surjective.
\end{proof}

\section{The global dimension of $\Pi_R(S)$}
In this section we prove the following:
\begin{theorem}\label{thm:globdim}
The global dimension of $\Pi_R(S)$ is bounded by the number \[ \max( \gldim(R), \gldim(S) ) + 2\]
\end{theorem}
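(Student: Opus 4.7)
The plan is to construct a length-$2$ bimodule resolution of $A := \Pi_R(S)$ over $A_0 := R \ds S$, and then derive the global dimension bound by a standard change-of-rings argument.

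Setting $V = M \ds N$, I would first observe that the $A_0$-sub-bimodule of relations $W \subset V \tr_{A_0} V$ is isomorphic to $A_0$ itself: the first relation $1 \tr 1 \in M \tr_S N \cong {}_R S_R$ generates an $R$-sub-bimodule isomorphic to $R$, and crucially using the commutativity of $S$, the Casimir element $\sum_i e_i \tr f_i \in N \tr_R M \cong {}_S(S \tr_R S)_S$ is $S$-central (that is, $s \cdot \sum_i e_i \tr f_i = \sum_i e_i \tr f_i \cdot s$ for all $s \in S$) and generates an $S$-sub-bimodule isomorphic to $S$. The bimodule Koszul-type complex to build is then
\[
0 \mor A \tr_{A_0} A \xrightarrow{\kappa} A \tr_{A_0} V \tr_{A_0} A \xrightarrow{\delta} A \tr_{A_0} A \xrightarrow{\mu} A \mor 0,
\]
where $\mu$ is multiplication, $\delta(a \tr v \tr b) = av \tr b - a \tr vb$, and $\kappa$ is the standard Koszul relation-lift; for example, on the $S$-component, $\kappa(1_S \tr 1_S) = \sum_i (e_i \tr f_i \tr 1 + 1 \tr e_i \tr f_i)$, with an analogous formula on the $R$-component. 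That $\delta \circ \kappa = 0$ is immediate from the defining relations.

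The technical heart of the argument is proving exactness of this complex. Exactness at $A$ is obvious and exactness at the right-most $A \tr_{A_0} A$ follows from a standard telescoping. The main obstacle is exactness at the middle term. I would handle it by following the reduction strategy of diagram (\ref{diag:reduction}): first verify it over an algebraically closed field $k$ in the two extremal cases of the deformation diagram, namely $S = k^{\ds 4}$ (where exactness is the classical $2$-Calabi-Yau property of the preprojective algebra of $\widetilde{D_4}$, accessible through the Morita equivalence of Lemma \ref{lem:dihedralskew}) and $S = k[s,t]/(s^2,t^2)$ (by a direct calculation in the spirit of Appendix A.1). The Hilbert series computation, using the ranks from Theorem \ref{thm:free} and Lemma \ref{lem:splithilbertseries}, confirms that the alternating sum of ranks of the complex vanishes in every degree; combined with projectivity of each term as an $R$-module and the compatibility of the complex with base change, this enables the usual extension from fields to general rings.

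With exactness in hand, the bound follows by the standard change-of-rings argument. For any graded left $A$-module $X$, tensoring the resolution over $A$ with $X$ produces a length-$2$ resolution of $X$ by modules of the form $A \tr_{A_0} Y$ (using the right $A_0$-flatness of $A$, which follows from the right-sided analogue of Lemma \ref{lem:splithilbertseries} together with the Frobenius property of $S/R$). For any $A_0$-module $Y$, we have $\operatorname{pd}_A(A \tr_{A_0} Y) \le \operatorname{pd}_{A_0}(Y) \le \gldim(A_0) = \max(\gldim R, \gldim S)$, yielding the bound $\operatorname{gr.gl.dim}(\Pi_R(S)) \le 2 + \max(\gldim R, \gldim S)$.
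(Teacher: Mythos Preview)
Your bimodule-resolution approach is natural and would, if it went through, yield a clean change-of-rings argument. The paper takes a different and more elementary route: it proves only the \emph{one-sided} analogue of your complex (Lemma~\ref{lem:relexactsequence}), namely that $R\ds S$ has projective dimension at most $2$ as a $\Pi_R(S)$-module; it then classifies the simple $\Pi_R(S)$-modules as simple $R$- or $S$-modules (Lemma~\ref{lem:simplemodules2}) and invokes Bass's criterion that the global dimension equals the supremum of projective dimensions of simples. For a simple $R$-module $M$ one takes an $R$-projective resolution $P_\bullet\to M$ of length $\le\gldim(R)$; each $P_i$, viewed as a $\Pi_R(S)$-module through the surjection $\Pi_R(S)\twoheadrightarrow R$, is a summand of some $R^n$, hence has $pd_{\Pi_R(S)}(P_i)\le 2$, and the bound follows.

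There is a genuine gap in your argument. Both places where you invoke ``right $A_0$-flatness of $A$'' --- to preserve exactness after applying $-\tr_A X$, and to obtain $pd_A(A\tr_{A_0}Y)\le pd_{A_0}(Y)$ --- require that $A\cdot 1_S$ be flat as a right $S$-module, not merely as a right $R$-module. The right-sided analogue of Lemma~\ref{lem:splithilbertseries} gives only $R$-projectivity of the graded pieces, and the Frobenius property of $S/R$ does not bridge this gap: for example with $R=k$ and $S=k[t]/(t^2)$, the module $k=S/(t)$ is $R$-projective but not $S$-flat, so there is no general implication of the form ``$R$-projective $S$-module $+$ Frobenius $\Rightarrow$ $S$-flat''. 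Whether $A\cdot 1_S$ is in fact $S$-flat is a nontrivial question not settled by anything in the paper, and proving it (if true) appears to require roughly as much work as the theorem itself. The paper's approach sidesteps this entirely: since simple $\Pi_R(S)$-modules already carry trivial $A_{\ge 1}$-action, one never tensors the resolution with a general $X$ but only uses $pd_A(R)\le 2$ and $pd_A(S)\le 2$ directly.
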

We first bound the projective dimension of $R$ and $S$ as $\Pi_R(S)$-modules. 
\begin{lemma} \label{lem:relexactsequence}
The $R$-module $R \oplus S$ admits a projective resolution of the following form:
\begin{equation}
 0 \mor \Pi_R(S)(-2) \stackrel{\alpha_2}{\mor} ({}_SS_R \oplus {}_RS_S) \otimes \Pi_R(S)(-1) \stackrel{\alpha_1}{\mor} \Pi_R(S) \stackrel{\alpha_0}{\mor} R \oplus S \mor 0 \label{eq:relexact}\end{equation}
\end{lemma}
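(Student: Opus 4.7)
My plan is to define $\alpha_0$, $\alpha_1$, $\alpha_2$ explicitly from the presentation of $\Pi_R(S)$, verify that the resulting sequence is a complex, and then argue exactness by reducing to the algebraically closed field case along the lines of diagram (\ref{diag:reduction}). I define $\alpha_0$ as the canonical projection onto the degree-zero part $R \oplus S$; $\alpha_1$ as the multiplication map $v \otimes q \mapsto v \cdot q$, which is meaningful because $N \oplus M = \Pi_R(S)_1$; and $\alpha_2$ as the map encoding the two defining relations of Definition \ref{def}. Explicitly, decomposing $\Pi_R(S)(-2) = 1_R \Pi_R(S)(-2) \oplus 1_S \Pi_R(S)(-2)$, on the $R$-summand I set $\alpha_2(p) := 1_M \otimes (1_N \cdot p) \in M \otimes \Pi_R(S)(-1)$, and on the $S$-summand $\alpha_2(p) := \sum_i e_i \otimes (f_i \cdot p) \in N \otimes \Pi_R(S)(-1)$.

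Checking that this is a complex is immediate: $\alpha_0 \circ \alpha_1 = 0$ because $\alpha_1$ lands in positive degrees; and $\alpha_1 \circ \alpha_2 = 0$ because the images of $\alpha_2$ multiply out, under $\alpha_1$, to the two defining relations of $\Pi_R(S)$ right-multiplied by $p$, both of which vanish. Exactness at the outer two terms is equally straightforward: $\alpha_0$ is visibly surjective, and $\ker \alpha_0 = \Pi_R(S)_{\geq 1} = \im \alpha_1$ because $\Pi_R(S)$ is generated as an $R$-algebra in degrees $0$ and $1$.

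The main difficulty is proving exactness at the middle term $(N \oplus M) \otimes \Pi_R(S)(-1)$ together with the injectivity of $\alpha_2$. My approach would follow the reduction pattern of diagram (\ref{diag:reduction}): since each term of the complex is a projective $R$-module (by Theorem \ref{thm:free} and Lemma \ref{lem:splithilbertseries}), and since the construction of all three maps is compatible with base change (as in Lemma \ref{lem:basechange}), it is enough to verify the claim fiberwise, and by Lemma \ref{lem:localclosure} we may take the residue field algebraically closed. Using the deformation diagram (\ref{eq:deformat}) and Nakayama-type lifting arguments in the spirit of the proofs of Lemmas \ref{lem:ineqpirs} and \ref{lem:defosigma}, one reduces to the two extremal cases $S = k^{\oplus 4}$ and $S = k[s,t]/(s^2, t^2)$. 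In the former case, $\Pi_k(S)$ is the classical preprojective algebra of $\widetilde{D_4}$, which is non-Dynkin and hence Koszul, so the two-step Koszul resolution provides the desired exactness. In the latter case, exactness can be established by direct computation with the explicit ``Type I''--``Type II'' basis from Appendix $A.1$. As a consistency check, the Hilbert series computations of Theorem \ref{thm:free} and Lemma \ref{lem:splithilbertseries} confirm that the alternating sum of $R$-ranks of the four terms vanishes in positive degrees and equals $5$ in degree $0$, matching $R \oplus S$. The main obstacle I anticipate is the injectivity of $\alpha_2$ in the non-split case $S = k[s,t]/(s^2,t^2)$, which will require a careful bookkeeping of the generators and relations of $\Pi_k(k[s,t]/(s^2,t^2))$ compiled in the appendix.
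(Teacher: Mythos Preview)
Your argument is correct in outline but takes a considerably longer route than the paper, and you underuse the very tool that actually closes the proof. Two remarks. First, exactness at the middle term is not part of the difficulty: since the ideal defining $\Pi_R(S)$ is generated in degree~$2$, the equality $\im(\alpha_2)=\ker(\alpha_1)$ is the standard fact that for any quadratic quotient $A=T_B(V)/(I_2)$ one has $\ker(V\otimes_B A\to A)$ equal to the image of $I_2\otimes_B A$. The only nontrivial point is the injectivity of $\alpha_2$. Second, for that injectivity the paper does \emph{not} invoke the diagram-(\ref{diag:reduction}) reduction or any deformation argument. It splits (\ref{eq:relexact}) into its $1_R$- and $1_S$-summands, localizes so that every graded piece is a finite free $R$-module, and then uses that once three positions of a bounded complex of finite free modules are exact, the kernel at the remaining position is free of rank equal to the alternating sum of ranks; the Hilbert-series identities supplied by Lemma~\ref{lem:splithilbertseries} show this alternating sum vanishes in every degree, so $\ker(\alpha_2)=0$. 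In other words, the Hilbert-series computation you call a ``consistency check'' is in fact the entire proof, and your detour through Koszulity of $\Pi_k(k^{\oplus 4})$ and a direct Type~I/II computation for $k[s,t]/(s^2,t^2)$ is unnecessary. (Incidentally, in your deformation step exactness would propagate only from $F$ to $G$ along $\frobdef{F}{G}$, so establishing the leftmost case $k[s,t]/(s^2,t^2)$ alone would already suffice; the $k^{\oplus 4}$ case is then redundant rather than a second independent anchor.)
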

\begin{proof}
$\alpha_0$ is the canonical projection with kernel $\Pi_R(S)_{\geq 1}$. This module is generated by $\Pi_R(S)_1 = {}_SS_R \oplus {}_RS_S$, hence $\im(\alpha_1)=\ker(\alpha_0)$. Since the relations of $\Pi_R(S)$ are generated in degree 2, we also have $\im(\alpha_2) = \ker(\alpha_1)$. Hence the injectivity of $\alpha_2$ is the only nontrivial part of the claim.\\
The sequence  (\ref{eq:relexact}) is a direct sum of the following two subsequences:
\begin{equation}
 0 \mor 1_R \cdot \Pi_R(S)(-2) \mor 1_S \cdot \Pi_R(S)(-1) \mor 1_R \cdot \Pi_R(S) \mor R \mor 0 \label{eq:relexactr} \end{equation}
\begin{equation}
 0 \mor 1_S \cdot \Pi_R(S)(-2) \mor (1_R \cdot \Pi_R(S)(-1))^{\oplus 4} \mor 1_S \cdot \Pi_R(S) \mor S \mor 0 \label{eq:relexacts} \end{equation}
By Lemma \ref{lem:basechange} exactness can be checked after localization at each prime ideal of $R$ and we thus may assume that all terms in (\ref{eq:relexactr}) and (\ref{eq:relexacts}) are free $R$-modules of finite rank in each degree by Lemma \ref{lem:splithilbertseries}. The claim reduces to the following relation on the Hilbert series:  for each $d \in \mathbb{N}$ we must have
\begin{eqnarray*}
h_{d-2}(1_R \cdot \Pi_R(S)(-2)) - h_{d-1}(1_S \cdot \Pi_R(S)(-1)) + h_d(1_R \cdot \Pi_R(S)) - \delta_{d0} & = & 0 \\
h_{d-2}(1_S \cdot \Pi_R(S)(-2)) - 4 h_{d-1}(1_R \cdot \Pi_R(S)(-1)) + h_d(1_R \cdot \Pi_R(S)) - 4 \delta_{d0} & = & 0
\end{eqnarray*}
(where $h_d(-)$ denotes the rank of the degree $d$-part as an $R$-module)\\
Using Lemma \ref{lem:splithilbertseries} we see that these relations are satisfied.
\end{proof}


\begin{lemma} \label{lem:simplemodules2}
A $\Pi_R(S)$-module is simple if and only if it is simple over $R$ or simple over $S$.
\end{lemma}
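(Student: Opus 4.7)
The plan is to read this statement in the graded sense (the surrounding theorems concern $\mathrm{gr.gl.dim}$), so \emph{simple} means graded-simple for the $\mathbb{N}$-grading on $\Pi_R(S)$. The key mechanism is that the augmentation ideal $\Pi_R(S)_{\geq 1}$ is a graded two-sided ideal, while the degree-zero part $\Pi_R(S)_0 = R \oplus S$ splits via the orthogonal idempotents $1_R, 1_S$.

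For the easy direction, collapsing $\Pi_R(S)_{\geq 1}$ together with either $1_S$ or $1_R$ produces graded ring surjections $\pi_R : \Pi_R(S) \twoheadrightarrow R$ and $\pi_S : \Pi_R(S) \twoheadrightarrow S$. Any simple $R$-module (resp. simple $S$-module) pulls back via $\pi_R$ (resp. $\pi_S$) to a graded $\Pi_R(S)$-module concentrated in degree zero whose $\Pi_R(S)$-submodules coincide with its $R$- (resp. $S$-) submodules, so simplicity is preserved.

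For the converse, let $M$ be a graded-simple $\Pi_R(S)$-module. Choosing a nonzero homogeneous $m \in M_i$ gives $M = \Pi_R(S) \cdot m \subseteq \bigoplus_{j \geq i} M_j$, so $M$ is bounded below. The graded submodule $\Pi_R(S)_{\geq 1} \cdot M$ is then either $0$ or $M$, but its degree-$i$ component equals $\sum_{j \geq 1} \Pi_R(S)_j \cdot M_{i-j} = 0$, ruling out the latter. Consequently $\Pi_R(S)_{\geq 1} \cdot M = 0$, so $M$ is a graded-simple $R \oplus S$-module and is therefore concentrated in a single degree $i$. The decomposition $M_i = 1_R M_i \oplus 1_S M_i$ into $R \oplus S$-submodules then forces exactly one summand to vanish by simplicity, whence $M_i$ is a simple module over $R$ or over $S$. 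The only real obstacle is committing to the graded interpretation of the statement; once made explicit, the argument is a standard bottom-degree reduction combined with the elementary structure of simple modules over an idempotent-split algebra.
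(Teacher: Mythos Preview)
Your argument is correct, and your graded reading is the right one: for $R=k$ algebraically closed and $S=k^{\oplus 4}$, the Morita equivalence of Lemma~\ref{lem:dihedralskew} with $k[x,y]\# BD_2$ supplies ungraded simples supported on generic orbits that are not simple over $R$ or $S$, so the ungraded statement fails.

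The paper argues in the reverse order from you: it first asserts $M=1_RM$ or $M=1_SM$ directly from the idempotent decomposition $M=1_RM\oplus 1_SM$, and then, assuming say $1_SM=0$, observes that each summand of $\Pi_R(S)_1={}_RS_S\oplus{}_SS_R$ carries a factor $1_S$ on one side, forcing $\Pi_R(S)_1M=0$. Your order --- annihilate $\Pi_R(S)_{\geq 1}$ via the bottom-degree argument, then split by idempotents over $R\oplus S$ --- is the cleaner one, because in the paper's order the summands $1_RM$ and $1_SM$ are not \emph{a priori} $\Pi_R(S)$-submodules; your bounded-below step is exactly what is needed to justify the paper's opening claim, after which its bimodule computation becomes a redundant (though structurally pleasant) alternative way to see that degree~$1$ acts trivially.
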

\begin{proof}
A $\Pi_R(S)$-module which is simple as an $R$- or $S$-module is clearly simple as a $\Pi_R(S)$-module. 
Conversely if $M$ is a simple $\Pi_R(S)$-module, then $M = 1_RM$ or $M = 1_SM$ since $M = 1_RM \oplus 1_SM$. 
Moreover we claim that $\Pi_R(S)_{\geq 1} M = 0$ or equivalently $\Pi_R(S)_1 M =0$. For this assume (for example) that $M=1_RM$. If $x \in {}_SS_R$ then 
\[ xM = (1_S x )M = 1_S (xM) \subset 1_S M = 0 \]
and if $x \in {}_RS_S$ then
\[ xM = (x 1_S)M = x(1_SM) = 0 \]
Hence only the $R$-component in degree 0 acts non-trivially on $M$, it follows in particular that $M$ is also a simple $R$-module. The case $M=1_S M$ is completely analogous. \qedhere


\end{proof}
\begin{proof}[Proof of Theorem \ref{thm:globdim}]
By \cite[Proposition III.6.7(a)]{bass} it suffices to check that if $M$ is a simple $\Pi_R(S)$-module then: 
\[ pd_{\Pi_R(S)}(M) \leq \max( \gldim(R), \gldim(S) ) + 2 \]
By Lemma \ref{lem:simplemodules2}, $M$ is a simple $R$-module or a simple $S$-module. We assume the former, the other case being completely similar. Let $P_\bullet \mor M$ be a resolution of $M$ by projective $R$-modules of length $pd_R(M)\leq \gldim(R)$. Then for each $i$, by Lemma \ref{lem:relexactsequence} we have
\[ pd_{\Pi_R(S)}(P_i) \leq pd_{\Pi_R(S)}(R) \leq pd_{\Pi_R(S)}(R \oplus S) \leq 2\]
A standard long exact sequence-argument now gives the desired result.
\end{proof}

\newpage
\appendix
\addtocontents{toc}{\protect\setcounter{tocdepth}{1}}
\section{Explicit computations for $\displaystyle S = \frac{k[s,t]}{(s^2,t^2)}$}
We describe $\displaystyle \Pi_k (S)$ through generators and relations:
\begin{itemize}
\item $\Pi_k(S)_0 = k \oplus S$. Let $a$ denote $(1_k,0)$ and $b=(0,1_S)$ then since $a+b=1$, $a,1,s,t,st$ is a $k$-basis for $\Pi_k(S)_0$. It is clear that this set satisfies the relations 
\[ a^2=a, as=sa=at=ta=0\]
\item $\Pi_k (S)_1 = {}_kS_S \oplus {}_SS_k$. Let $f$ be $(1_S,0)$ and $e=(0,1_S)$, then we can write $\Pi_k(S)_1 = fS \oplus Se$. Hence $f,fs,ft,fst,e,se,te,ste$ is a $k$-basis for $\Pi_k(S)_1$. By construction, each generator $\neq 1$ of $\Pi_k(S)_0$ acts nontrivially on exactly one side of each component. Hence we have the relations
\[ ea=e,af=f,ae=fa=0,es=et=sf=tf=0 \]
Note that this implies $e^2=f^2=0$ since for example 
\[ e^2 = (ea) e = e (ae) = 0\]
\item It is clear that the relation $1 \otimes 1 \in {}_kS_S \otimes {}_SS_k$ takes the form $fe=0$. To compute the second relation, note that projection onto $kst$ provides the duality isomorphism $\Hom_R(S,R) \cong S$ (see Lemma \ref{lem:deformations}). It immediately follows that $(e,se,te,ste)$ is dual to $(fst, ft,fs,f)$ in the sense of Definition \ref{def}. The relation now takes the form
\begin{equation} \label{eq:useful} efst + seft + tefs + stef = 0 \end{equation}
\end{itemize}
To sumarize $\Pi_k(S)$ is a quotient of the free algebra $k<a,s,t,e,f>$ by the relations
\[ \begin{cases}
s^2=t^2=st-ts=0 \\
a^2=a,as=sa=at=ta=0 \\
ea=e,af=f,ae=fa=0,es=et=sf=tf=0 \\
fe = efst + seft + tefs + stef = 0 
\end{cases} \]
Note that $\Pi_k(S)$ is a graded algebra via $\deg(a)=\deg(s)=\deg(t)=0$ and $\deg(e)=\deg(f)=1$.
\subsection{Proof of Lemma \ref{lem:bikwaddegree}}
In this subsection we give sets of generators in each degree, hence giving an upper bound for $\dim_k \left( \Pi_k(S)_d \right)$. More explicitly we prove that
\[ \dim_k \left( \Pi_k \left(\frac{k[s,t]}{(s^2,t^2)} \right)_d \right) \leq \left\{ \begin{array}{cl} 5(d+1) & \textrm{if $d$ is even}\\4(d+1) & \textrm{if $d$ is odd} \end{array} \right. \]
For this we make the following remarks:
\begin{itemize}
\item In each degree there are generators of two types:
\begin{itemize}
\item[Type I)] Elements of the form $f * ef * ef \ldots *ef*e(f(*))$ where each $*$ is either $s,t$ or $st$
\item[Type II)] Elements of the form $(*) ef * ef  \ldots *ef*e(f(*))$ where each $*$ is either $s,t$ or $st$
\end{itemize}
\item Let $\r{R}$ denote the relation (\ref{eq:useful}), then $f\r{R}e, t\r{R}, s\r{R}, st\r{R}$ take the form
\begin{eqnarray}
\label{eq:commute1} fsefte & = & -f t efse\\
\label{eq:commute2} steft & = & - t efst\\
\label{eq:commute3} stefs & = & - s efst\\
\label{eq:commute4} stefst & = & 0
\end{eqnarray}
\item As a consequence of the above equalities, we know that for any non-zero element there is at most one appearance of $st$. For example: 
\[ f \underline{st} ef s ef \underline{st} = fstef (sefst) = - fstef (stefs) = -f(stefst)efs = 0 \]

\end{itemize}
We say any of the above elements is of bidegree $(m,n)$ if there are $m$ appearances of $s$ and $n$ appearances of $t$. It is easy to see that the above relations do not violate this bidegree and that it turns $\Pi_k(S)$ into a $\d{Z} \times \d{Z}$-graded ring.
Using the above remarks we create (minimal) sets of generators by a case-by-case study:
\begin{itemize}
\item \underline{Case 1: $d$ even and Type I}\\
All words in this case take the form $(f*e) \ldots (f*e)$. We can use relations (\ref{eq:commute1}), (\ref{eq:commute2}), (\ref{eq:commute3}) to write the element in the form $\pm (fse)^i(fste)^\varepsilon(fte)^j$ where $\varepsilon = 0,1$. For $\varepsilon = 0$ we have $\frac{d}{2}+1$ choices for $i$ and $j$ and for $\varepsilon = 1$ we have $\frac{d}{2}$ choices, giving a total $d+1$ generators.
\item \underline{Case 2: $d$ even and Type II}\\
These are elements of the form $(*) (ef*) \ldots (ef*)ef (*)$ and since there is at most one occurrence of $st$ the bidegree satisfies $ \frac{d}{2} -1\leq m+n \leq \frac{d}{2} +2$.\\
If $ m+n = \frac{d}{2} - 1$ the element can be written in the form $\pm (efs)^m(eft)^nef$, giving $ \frac{d}{2}$ choices. Similarly if $ m+n = \frac{d}{2} +2$ the element can be written in the form $\pm (sef)^{m-1} st (eft)^{n-1}$. Giving $ \frac{d}{2}+1$ choices.\\ 
Assume $ m+n = \frac{d}{2}$. If $(m,n)=(\frac{d}{2},0)$ (or $(m,n)=(0,\frac{d}{2})$) we have 2 generators: $(sef)^\frac{d}{2}$ and $(efs)^\frac{d}{2}$ (or $(tef)^\frac{d}{2}$ and $(eft)^\frac{d}{2}$).\\
In all other cases we need 3 generators: $(sef)^m(tef)^n$, $(efs)^m(eft)^n$ and $(efs)^{m-1}efstef(tef)^{n-1}$. This gives a total of $\frac{3d}{2}+1$ generators for this subcase.\\
Finally assume $m+n = \frac{d}{2} + 1$. If $(m,n)=(\frac{d}{2}+1,0)$ (or $(m,n)=(0,\frac{d}{2}+1)$) we have 1 generator: $(sef)^\frac{d}{2}s$ (or $(tef)^\frac{d}{2}t$ ).\\
In all other cases we need 3 generators: $(sef)^m(tef)^{n-1}t$, $(efs)^{m-1} efst (eft)^{n-1}$ and $(sef)^{m-1} stef (tef)^{n-1}$. This gives a total of $\frac{3d}{2}+1$ generators for this subcase.\\
For case 2 this results in $\displaystyle \frac{d}{2} + \left( \frac{3d}{2}+1 \right) + \left( \frac{3d}{2} + 2 \right) + \left( \frac{d}{2}  + 1\right) = 4(d+1)$ generators. \\
Finally adding up the number of generators from Case 1 and Case 2 yields $5(d+1)$ generators in case $d$ is even.
\item \underline{Case 3: $d$ odd and Type I}\\
All elements in this case take the form $(f*e)(f*e) \ldots (f*e)f(*)$. By a completely similar argument as above, we conclude that generators can be chosen of the following forms:\\
$(fse)^m(fte)^nf$, $(fse)^m(fte)^{n-1}ft$, $(fse)^{\frac{d-1}{2}}fs$, $(fse)^{n-1}(fte)^{m-1}fst$ and $fste(fse)^{n-1}(fte)^{m-1}f$.
This gives a total of
\[ \frac{d+1}{2} + \frac{d+1}{2} + 1 + \frac{d+1}{2} +\frac{d-1}{2} = 2(d+1) \]
generators
\item \underline{Case 4: $d$ odd and Type II}\\
Elements in this case are of the form $(*)e(f*e)(f*e) \ldots (f*e)$. Note that any such word can be obtained by taking a word from Case 3, reading it from right to left and interchanging $e$ and $f$. Applying this ``procedure'' to the generators of Case 3 yeilds a set of generators for the current case by symmetry. Hence in the current case we have $2(d+1)$ generators, adding up to $4(d+1)$ generators in case $d$ is odd.
\end{itemize}
\subsection{Proof of Lemma \ref{lem:explicitcenter}}
Consider the elements
\[ u := sef + efs + fse \textrm{ and } v:= tef + eft + fte\]
It is easy to see that $u$ is normalizing with respect to the automorphism $\sigma$ on $\Pi_k(S)$ which sends $t$ to $-t$ and is the identity on the other generators. As $\sigma^2=Id$ we have as an immediate consequence that $u^2$ is central. A completely similar discussion yields that $v^2$ is central. Using the relations in $\Pi_k(S)$ we can write $u^2$ and $v^2$ as
\[ A := sefsef + efsefs + fsefse \textrm{ and } B := teftef + efteft + ftefte\]
One then checks that the following element is central in degree 6:
\[ C = sefsteftef + efsefsteft + fsefstefte \]
The proof of the lemma now follows from several technical steps

\begin{enumerate}
\item As any nonzero word in $\Pi_k(S)$ allows at most 1 appearance of $st$ we have $C^2=0$. Any other relation in $A,B$ and $C$ can then be written as
\[ p_1(A,B) + C \cdot p_2(A,B) = 0 \]
for some polynomials $p_1, p_2$. This automatically implies $p_1=p_2=0$ (consider bidegrees!).\\
In particular there is an inclusion of rings
\[ \zeta : \frac{k[A,B,C]}{(C^2)} \hookrightarrow Z \left( \Pi_k(S) \right) \]
We will prove that this inclusion is in fact an isomorphism. We do so by checking surjectivity in each degree separately.
\item \underline{There are no homogeneous central elements of an odd degree.}\\
\underline{In particular: $\zeta$ is surjective in each odd degree.}\\
Let $x$ be a homogeneous element of odd degree. If $ex \neq 0$, then it is a linear combination of monomials starting with $f$ and hence ending in $f,fs,ft$ of $fst$, in particular such an element is never of the form $ye$. Hence the only way $ex=xe$ is possible, is when $xe=ex=0$. Similarly for $x$ to be cental we need $fx=xf=0$. We claim that a non-trivial homogeneous element $x$ of odd degree satisfying $ex=xe=fx=xf=0$ does not exist. For this let $x$ be such an element. Then $x$ is of one of the following 4 forms:
\begin{enumerate}
\item[(i)] $x = \alpha s (efs)^me + \beta (fse)^mfs$ \\ or $x = \alpha t (eft)^ne + \beta (fte)^nft$
\item[(ii)] $x = \alpha (efs)^m(eft)^n e + \beta (fse)^m(fte)^n f$
\item[(iii)] $x = \alpha (sef)^m(tef)^{n-1}te + \beta (fse)^m(fte)^{n-1} ft $ \\$+ \ \gamma (efs)^{m-1} efst (eft)^{n-1}e + \delta (fse)^{m-1} fste (fte)^{n-1} f$
\item[(iv)] $x = \alpha (sef)^{m-1}st(eft)^{n-1}e + \beta (fse)^{m-1} fst (eft)^{n-1}$
\end{enumerate}
and for each of the 4 possibilities we check that $ex=xe=fx=xf=0$ implies $\alpha = \beta (=\gamma = \delta) = 0$. However this is immediate because in each case $ex=0$ implies $\beta (= \delta) = 0$ and $xf=0$ implies $\alpha (= \gamma) = 0$.

\item In an even degree $d=2l$ there are no central elements of bidgree $(m,n)$ with $m+n = l-1$.\\
Such an element is necessarily of the form
\[ x = (efs)^m(eft)^nef \]
and does not commute with $te$ as $tex = 0 \neq xte$.

\item In an even degree $d=2l$ there are no central elements of bidgree $(m,n)$ with $m+n = l+2$.\\
Such an element is necessarily of the form
\[ x = (sefs)^{m-1}st(eft)^{n-1} \]
and does not commute with $efs$ as $efs \cdot x =0$ whereas\\ $x \cdot efs = (-1)^n (sef)^mst(eft)^{n-1} \neq 0$.

\item \underline{If the degree is $d=2l$ is even, then for each bidegree $(m,n)$ with $m+n=l$} \underline{there is one central element if $m$ and $n$ are even and no central element if}\\ \underline{one of them is odd.} \\
An element of the given degree and bidegree can be written as:
\begin{eqnarray*} x & = & \alpha (sef)^m(tef)^n + \beta (efs)^m(eft)^n \\ & & + \ \gamma (fse)^m(fte)^n + \delta (efs)^{m-1}efstef(tef)^{n-1} \end{eqnarray*}
$ex=xe$ implies $\gamma = \beta$ and $fx=xf$ implies $\alpha = \gamma$. Hence if $x$ is central it can be written as
\[ x= \alpha x_1 + \delta' x_2 \]
where
\begin{eqnarray*} x_1 & = & (sef)^m(tef)^n + (efs)^m(eft)^n \\ & & + \ (fse)^m(fte)^n + (efs)^{m-1}efstef(tef)^{n-1}\\
x_2 & = & (efs)^{m-1}efstef(tef)^{n-1}\\
\delta' & = & \delta - \alpha
\end{eqnarray*}
Now let $m = 2m' + \epsilon_m$ and $n = 2n' + \epsilon_n$ with $\epsilon_m, \epsilon_n = 0,1$, then if $A,B,u,v$ are as above one sees
\[ x_1 = u^{\epsilon_m}A^{m'}B^{n'}v^{\epsilon_n} \]
In particular $x_1$ commutes with $se$ if $\epsilon_n = 0$ and anti-commutes with $se$ if $\epsilon_n = 1$. On the other hand $se \cdot x_2 = 0 \neq x_2 \cdot se$ and $x_2 \cdot se$ is linearly independent from $x_1 \cdot se$. Hence in order for $x$ to commute with $se$ we need $\delta' = \epsilon_n = 0$. A similar argument using $te$ in stead of $se$ we see that $\epsilon_m = 0$. Hence $m,n$ and $l$ are even and setting $\alpha = 1$ gives
\[ x = A^{m'} B^{n'} \]
which is central as $A$ and $B$ are central.

\item \underline{If the degree is $d=2l$ is even, then for each bidegree $(m,n)$ with}\\ \underline{$m+n=l+1$ there is one central element if $m,n \geq 2$ are even and} \\ \underline{no central element in all other cases}. \\
An element of bidegree $(l+1,0)$ is of the form
\[ x = (sef)^ls \]
and does not commute with $e$. Similarly an element of bidegree $(0,l+1)$ does not commute with $e$, hence we can assume $m,n \geq 1$ in which case
\begin{eqnarray*} x & = & \alpha (sef)^m(tef)^{n-1}t + \beta (sef)^{m-1}stef(tef)^{n-1} \\
 & & + \ \gamma (efs)^{m-1}efst(eft)^{n-1} + \delta (fse)^{m-1}fste(fte)^{n-1} \end{eqnarray*}
$ex=xe$ implies $\gamma = \delta$ and $\alpha = 0$. $fx=xf$ implies $\beta = \delta$. Hence if $x$ is central it can (upto a scalar) be written as
\begin{eqnarray*} x & = & (sef)^{m-1}stef(tef)^{n-1} + (efs)^{m-1}efst(eft)^{n-1}\\ & & \ + (fse)^{m-1}fste(fte)^{n-1}  \end{eqnarray*}
Now let $m = 2m' + \epsilon_m$ and $n = 2n' + \epsilon_n$ and $A,B,C,u,v$ be as above, then
\[ x = u^{\epsilon_m}A^{m'-1}CB^{n'-1}v^{\epsilon_n} \]
In order for $x$ to commute with $se$ we need $\epsilon_n = 0$. A similar argument using $te$ in stead of $se$ we see that $\epsilon_m = 0$. Hence $m$ and $n$ are even such that
\[ x = A^{m'-1} C B^{n'-1} \]
which is central as $A,C$ and $B$ are central.

\item \underline{$\zeta$ is surjective in all even degrees}
As we already have injectivity of $\zeta$, it suffices to check that $\displaystyle \frac{k[A,B,C]}{(C^2)}$ and $Z \left( \Pi_k(S) \right)$ have the same dimension over $k$ in each even degree $d$. Write $d = 2l$. By the above we find:
\begin{itemize}
\item $Z_d(k,S)$ is $\displaystyle \left(\frac{l}{2}+1 \right)$ - dimensional if $l$ is even
\item $Z_d(k,S)$ is $\displaystyle \left(\frac{l-1}{2}\right)$ - dimensional if $l$ is odd
\end{itemize}
By considering the Hilbert series, one sees that this agrees with $\displaystyle \dim_k \left( \left(\frac{k[A,B,C]}{(C^2)} \right)_d \right)$. And surjectivity of $\zeta$ is proven.
\end{enumerate}

\subsection{Proof of Lemma \ref{lem:surjectivespecific}}
Let $u$ and $v$ be the normalizing elements as above. And let $V \subset \Pi_k(S)_2$ be the $k$-vector space spanned by $u$ and $v$. Let $\mu_3$ be the multiplication morphism given by the composition
\[ \mu_3: V \otimes \Pi_k(S)_1 \mor \Pi_k(S)_2 \otimes \Pi_k(S)_1 \mor \Pi_k(S)_3 \]
Then we use a brute force computation to show that $\mu_3$ must be surjective. I.e. we show that any element of $\Pi_k(S)_3$ can be written as a linear combination of elements of the form $u \cdot x$ or $v \cdot x$ with $x \in \Pi_k(S)_1$. It suffices to check this for the generators of $\Pi_k(S)_3$:
\begin{itemize}
\item[Type I)]: elements of the form $f * ef (*) $.\\
These can all be put into the form $fsef(*)$ or $ftef(*)$ where $*$ is either $s, t$ or $st$. Now use $fsef(*) = u \cdot f(*)$ and similarly $ftef(*) = v \cdot f(*)$.
\item[Type II)]: elements of the form $(*) ef * e$
\begin{itemize}
\item[$\bullet$] $efse = u \cdot e$ and $efte = v \cdot e$
\item[$\bullet$] $sefse = u \cdot se$ and $tefte = v \cdot te$
\item[$\bullet$] $sefste = u \cdot ste$ and $tefste = v \cdot ste$
\item[$\bullet$] $sefte = -t efse - eftse = v \cdot (-se)$
\item[$\bullet$] $tefse = -s efte - efste = u \cdot (-te)$
\end{itemize}
\end{itemize}
Which shows that $\mu_3$ is indeed surjective.

Now for each degree $d$ we have a commutative diagram
\begin{center}
\begin{tikzpicture}
\matrix(m)[matrix of math nodes,
row sep=3em, column sep=5em,
text height=1.5ex, text depth=0.25ex]
{ V \otimes \Pi_k(S)_{d+1} & \Pi_k(S)_{d+3} \\
V \otimes \Pi_k(S)_1 \otimes \Pi_k(S)_d & \Pi_k(S)_3 \otimes \Pi_k(S)_d \\};
\path[->,font=\scriptsize]
(m-1-1) edge (m-1-2)
(m-2-2) edge (m-1-2)
(m-2-1) edge node[below]{$\mu_3 \otimes \Pi_k(S)_d$} (m-2-2)
        edge (m-1-1);
\end{tikzpicture}
\end{center}
where the top horizontal arrow must be a surjection as the other three are surjective. Hence by induction (and the fact that $V \otimes -$ is right exact) we have for each $n \in \mathbb{N}$ a surjection
\[ \mu_{2n+\epsilon}: V^{\otimes n} \otimes \Pi_k(S)_\epsilon \mor V^{\otimes n-1} \otimes \Pi_k(S)_{2+\epsilon} \mor \ldots \mor \Pi_k(S)_{2n+\epsilon} \]
Next let $W$ be the vector space spanned by $u^2$ and $v^2$, then for each $n$ and $\omega = 1,2$ there is a surjection
\[ W^{\otimes n} \otimes V^{\otimes \omega} \twoheadrightarrow V^{\otimes 2n+\omega} \]
and we have a commutative diagram
\begin{center}
\begin{tikzpicture}
\matrix(m)[matrix of math nodes,
row sep=3em, column sep=5em,
text height=1.5ex, text depth=0.25ex]
{ W^{\otimes n} \otimes V^{\otimes \omega} \otimes \Pi_k(S)_\epsilon & W^{\otimes n} \otimes \Pi_k(S)_{2 \omega + \epsilon} \\
V^{\otimes 2n+\omega} \otimes \Pi_k(S)_\epsilon \otimes \Pi_k(S)_d & \Pi_k(S)_{4n+2\omega+\varepsilon} \\};
\path[->,font=\scriptsize]
(m-1-1) edge (m-1-2)
        edge (m-2-1)
(m-1-2) edge node[right]{$ \rho_{4n+2\omega+\epsilon}$}(m-2-2)
(m-2-1) edge node[below]{$\mu_{4n+2\omega+\varepsilon}$} (m-2-2);
\end{tikzpicture}
\end{center}
where $ \rho_{4n+2\omega+\epsilon}$ must be surjective because the other three morphisms are. Then using the commutative triangle

\begin{center}
\begin{tikzpicture}
\matrix(m)[matrix of math nodes,
row sep=3em, column sep=1em,
text height=1.5ex, text depth=0.25ex]
{W^{\otimes n} \otimes \Pi_k(S)_{2 \omega + \epsilon} & & \Pi_k(S)_{4n+2\omega+\epsilon}\\
  & k[Z_4(k,S)]_n \otimes \Pi_k(S)_{2 \omega + \epsilon} & \\};
\path[->,font=\scriptsize]
(m-1-1) edge node[auto]{$\rho_{4n+2\omega+\epsilon}$} (m-1-3)
        edge (m-2-2)
(m-2-2) edge node[below]{$\ \ \ \ \ \ \ \overline{\rho_{4n+2\omega+\epsilon}}$} (m-1-3)
;
\end{tikzpicture}
\end{center}
we must have that $\overline{\rho_{4n+2\omega+\epsilon}}: k[Z_4(k,S)]_n \otimes \Pi_k(S)_{2 \omega + \epsilon} \mor \Pi_k(S)_{4n+2\omega+\epsilon}$ must be surjective. As $2 \omega + \epsilon$ takes the values 3,4,5,6 we have an induced surjection:
\[ \overline{\rho}: k[Z_4(k,S)] \otimes \Pi_k(S)_{\leq 6} \twoheadrightarrow \Pi_k(S) \]
(where we included $\Pi_k(S)_d$ for $d=0,1,2$ on the left hand side to guarantee surjectivity in these three lowest degrees).\\
Now $\sigma_{k,S}$ factors as $\overline{\rho} \circ \varsigma$ where $\varsigma$ is the morphism:
\[ \varsigma: k[Z_4(k,S)]^{\oplus N} \mor k[Z_4(k,S)] \otimes \Pi_k(S)_{\leq 6}: (z_i)_{i=1}^N \mapsto \sum_{i=1}^N z_i \otimes \chi(a_i) \]
By the choice of the $a_i$ in $H$, $\varsigma$ is surjective and hence also $\sigma_{k,S}$ proving the lemma.
\bibliographystyle{alpha}
\bibliography{Generalized_references}

\nocite{*}

\end{document}